\numberwithin{equation}{section}
\numberwithin{figure}{section}
\theoremstyle{plain}
\newtheorem{theorem}{Theorem}[section]
\newtheorem{lemma}[theorem]{Lemma}
\newtheorem{corollary}[theorem]{Corollary}
\newtheorem{proposition}[theorem]{Proposition}
\newtheorem{claim}[theorem]{Claim}
\theoremstyle{definition}
\newtheorem{remark}[theorem]{Remark}
\newtheorem{example}[theorem]{Example}
\newtheorem{condition}[theorem]{Condition}
\newcommand{\mdim}{\mathrm{mdim}}
\newcommand{\diam}{\mathrm{diam}}
\newcommand{\norm}[1]{\left|\!\left|#1\right|\!\right|}
\begin{document}

\title[From rate distortion theory to metric mean dimension]{From rate distortion theory to metric mean dimension: variational principle}

\author{Elon Lindenstrauss, Masaki Tsukamoto}

\subjclass[2010]{37A05, 37B99, 94A34}

\keywords{dynamical system, invariant measure, rate distortion function, metric mean dimension, variational principle}

\date{\today}

\thanks{We gratefully acknowledge the following sources of support: E.L. was supported by the European Research Council (Advanced Research Grant
267259) and the ISF (891/15). M.T. stay at the Hebrew University was supported by the John Mung Program of Kyoto University.}

\maketitle

\begin{abstract}
The purpose of this paper is to point out a new connection between information theory and dynamical systems.
In the information theory side, we consider rate distortion theory, which studies lossy data compression of 
stochastic processes under distortion constraints.
In the dynamical systems side, we consider mean dimension theory, which studies how many parameters per second we need
to describe a dynamical system. 
The main results are new variational principles connecting rate distortion function to metric mean dimension.
\end{abstract}

\section{Introduction} \label{section: introduction}

\subsection{Main results}  \label{subsection: main result}

There is a long tradition in the study of dynamical systems to consider 
the interplay between ergodic theory and topological dynamics (see e.g.\ Glasner--Weiss~\cite{Glasner--Weiss} for an in depth discussion).
An important manifastation of this interplay is the \textit{variational principle} relating measure theoretic and topological entropy
(Goodwyn \cite{Goodwyn}, Dinaburg \cite{Dinaburg} and Goodman \cite{Goodman}).
Let $(\mathcal{X}, T)$ be a dynamical system, i.e.\ $\mathcal{X}$ is a compact metric space and $T$ is a continuous map
from $\mathcal{X}$ to $\mathcal{X}$.
We denote by $\mathscr{M}^T(\mathcal{X})$ the set of all invariant probability measures on $\mathcal{X}$.
The variational principle connects the 
measure theoretic entropy $h_\mu(T)$ to the topological entropy $h_{\mathrm{top}}(T)$ by 
\begin{equation} \label{eq: variational principle}
  h_{\mathrm{top}}(T) = \sup_{\mu\in \mathscr{M}^T(\mathcal{X})} h_\mu (T).
\end{equation}  

In the end of the last century Gromov \cite{Gromov} proposed a new topological invariant of dynamical 
systems called \textit{mean dimension}.
The mean dimension of a dynamical system $(\mathcal{X},T)$ is denoted by $\mdim(\mathcal{X},T)$.
This invariant counts the average number of parameters needed per itaration for describing a point in $\mathcal{X}$, and gives a non-degenerate numerical invariant
for dynamical systems of \textit{infinite dimensional} and \textit{infinite entropy}.

For example, consider the infinite product of the unit interval 
\[ [0,1]^\mathbb{Z} = \cdots \times [0,1]\times [0,1] \times [0,1] \times \cdots \]
and let $\sigma$ be the shift map on this space.
The system $([0,1]^\mathbb{Z},\sigma)$ is obviously infinite dimensional and has infinite topological entropy, but its mean dimension is one.
Intuitively this means that to describe an orbit in $([0,1]^\mathbb{Z}, \sigma)$ one needs one parameter per iterate.
This is analogous to the fact that the symbolic shift $\{1,2,\dots, n\}^\mathbb{Z}$ has the topological entropy 
$\log n$.

Mean dimension has applications to topological dynamics, which cannot be touched within the framework of topological entropy.
Here we briefly explain an application to a natural \textit{embedding problem}, raised many years before the definition of mean dimension: \begin{quotation}
\textit{When can we embed a dynamical system $(\mathcal{X}, T)$ in the shift $([0,1]^\mathbb{Z}, \sigma)$?}
\end{quotation}
Mean dimension provides a necessary condition: If $(\mathcal{X},T)$ is embeddable in $([0,1]^\mathbb{Z},\sigma)$
then $\mdim(\mathcal{X},T)\leq 1$.
A deeper result \cite[Theorem 1.4]{Gutman--Tsukamoto} 
states that a minimal system $(\mathcal{X},T)$ of mean dimension less than $1/2$
can be embedded into $([0,1]^\mathbb{Z},\sigma)$ (this strenghened \cite[Theorem 5.1]{Lindenstrauss}, which proved a similar result but with a non-optimal constant). 
The result \cite[Theorem 1.4]{Gutman--Tsukamoto} is optimal in the sense that there exists a minimal system of mean dimension $1/2$ which cannot be embedded in
$([0,1]^\mathbb{Z},\sigma)$ (\cite[Theorem 1.3]{Lindenstrauss--Tsukamoto}).
These results show that mean dimension is certainly a reasonable measure of the ``size'' of dynamical systems.
The theory of mean dimension turns out to have connection to problems in several different mathematical fields, e.g.\
topological dynamics (\cite{Lindenstrauss, Lindenstrauss--Weiss, Li, Gutman 1, Gutman 2, Gutman 3}), 
geometric analysis (\cite{Costa, Matsuo--Tsukamoto}), and operator algebra (\cite{Li--Liang, Elliott--Niu}).

Motivated by the success of the variational principle (\ref{eq: variational principle}), 
one might want to define a \textit{measure theoretic mean dimension} and try to prove a 
corresponding variational principle,
but any na{\"\i}ve attemp to carry out this idea is doomed to failure.
The reason can be easily seen by using the Jewett--Krieger theorem (\cite{Jewett, Krieger}):
Every ergodic measurable dynamical system has a uniquely ergodic model.
Consider an arbitrary ergodic measurable dynamical system $\mathscr{X}$. 
Suppose we want to define its ``measure theoretic mean dimension''.
There exists a topological system $(\mathcal{X}, T)$ such that it is uniquely ergodic (i.e.\ $\mathscr{M}^T(\mathcal{X})$ consists of a single measure, say $\mu$)
and $(\mathcal{X},\mu,T)$ is measurably isomorphic to $\mathscr{X}$.
It is known that uniquely ergodic systems always have zero topological mean dimension 
(\cite[Theorem 5.4]{Lindenstrauss--Weiss}).
Then if we have a ``variational principle'', the only possibility is that the ``measure theoretic mean dimension'' of $\mathscr{X}$ is zero.

It turned out that \textit{rate distortion theory} and \textit{metric mean dimension} provide a much better framework to study this interplay.
Rate distortion theory is a standard concept in information theory originally introduced by the monumental paper of 
Shannon \cite{Shannon}.
Its primary object is data compression of \textit{continuous} random variables and their processes.
Continuous random variables always have infinite entropy, so it is impossible to describe them perfectly with only finitely many bits.
Instead rate distortion theory studies a \textit{lossy} data compression method achieving some \textit{distortion} constraints.
A friendly introduction can be found in Cover--Thomas \cite[Chapter 10]{Cover--Thomas}.
Metric mean dimension is a metric space version of mean dimension 
introduced by Weiss and the first named author \cite{Lindenstrauss--Weiss} that is related to mean dimension in a way that is very analogous to how Minkowski or Hausdorff dimensions are related to the topological dimension.
Both rate distortion theory and metric mean dimension use \textit{distance} as a crucial ingredient.
This metric structure enables us to give a meaningful variational principle.

First we explain rate distortion theory.
For a couple $(X,Y)$ of random variables $X$ and $Y$ we denote its mutual information by $I(X;Y)$.
We review the definition and basic properties of $I(X;Y)$ in Section \ref{section: mutual information}.
Intuitively it is the amount of information which $X$ and $Y$ share.
Let $(\mathcal{X},T)$ be a dynamical system with a distance $d$ on $\mathcal{X}$.
Take an invariant probability measure $\mu\in \mathscr{M}^T(\mathcal{X})$. 
For a positive number $\varepsilon$ we define the \textbf{rate distortion function} $R_\mu(\varepsilon)$ as the 
infimum of
\begin{equation} \label{eq: definition of rate distortion function}
   \frac{I(X;Y)}{n}, 
\end{equation}   
where $n$ runs over all natural numbers, and $X$ and 
$Y = (Y_0,\dots,Y_{n-1})$ are random variables defined on some probability space $(\Omega, \mathbb{P})$ such that 
\begin{itemize}
  \item $X$ takes values in $\mathcal{X}$, and its law is given by $\mu$.
  \item Each $Y_k$ takes values in $\mathcal{X}$, and $Y$ approximates the process $(X, TX, \dots, T^{n-1}X)$ in the 
  sense that
  \begin{equation} \label{eq: distortion condition}
    \mathbb{E}\left(\frac{1}{n} \sum_{k=0}^{n-1} d(T^k X, Y_k) \right) < \varepsilon.
  \end{equation}  
\end{itemize}
Here $\mathbb{E}(\cdot)$ is the expectation with respect to the probability measure $\mathbb{P}$.
Note that $R_\mu(\varepsilon)$ depends on the distance $d$ although it is not explicitly written in the notation.

Roughly speaking, $R_\mu(\varepsilon)$ is the minimum rate of quantizations of the process $\{T^k X\}_{k=0}^\infty$
under the distortion constraint (\ref{eq: distortion condition}).
More precisely, a main theorem of rate distortion theory \cite[Chapter 7]{Berger} 
states that if the invariant measure $\mu$ is ergodic then there exists a sequence of 
maps $f_n = (f_{n,0}, \dots, f_{n,n-1}):\mathcal{X}\to \mathcal{X}^n$ $(n\geq 1)$ satisfying 
\[ \lim_{n\to \infty} \frac{\log |f_n(\mathcal{X})|}{n} = R_\mu(\varepsilon), \quad 
     \mathbb{E}\left(\frac{1}{n} \sum_{k=0}^{n-1} d(T^k X, f_{n,k}(X)) \right) < \varepsilon,   \]
where $X$ is a random variable obeying $\mu$ (and $| {f _ n (\mathcal X)}|$ denotes the cardinality of the set $f _ n (\mathcal X)$).     
Then we can represent the process $\{T^k X\}_{k=0}^\infty$ by the quantization
\begin{multline*} f_{n,0}(X), \dots, f_{n,n-1}(X), f_{n,0}(T^n X), \dots,\\
 f_{n,n-1}(T^n X), f_{n,0}(T^{2n}X), \dots, f_{n,n-1}(T^{2n}X), \dots.
 \end{multline*}
This approximates $\{T^k X\}_{k=0}^\infty$ by $\varepsilon$ in average, and (if $n$ is sufficiently large) we need 
\[ \frac{\log|f_n(\mathcal{X})|}{n} \approx R_\mu(\varepsilon) \text{ nats per second} \]
for describing the sequence\footnote{``nats'' means ``natural unit of information''. Here the 
base of the logarithm is $e$ not $2$.}.
There also exists a similar theorem for non-ergodic $\mu$, but the statement is a bit more involved.
See \cite{ECG, LDN} for the details.

Next we explain metric mean dimension.
Let $(\mathcal{X},T)$ be a dynamical system with a distance $d$ as above.
For a positive number $\varepsilon$ we define $\#(\mathcal{X},d,\varepsilon)$ as the minimum cardinarity $N$ of the open covering 
$\{U_1,\dots,U_N\}$ of $\mathcal{X}$ such that all $U_n$ have diameter smaller than $\varepsilon$.
For a natural number $n$ we define a distance $d_n$ on $\mathcal{X}$ by 
\begin{equation}  \label{eq: d_n distance}
   d_n(x,y) = \max_{0\leq k < n} d(T^k x, T^k y). 
\end{equation}
We set 
\[ S(\mathcal{X},T,d,\varepsilon) = \lim_{n \to \infty} \frac{\log \#(\mathcal{X}, d_n, \varepsilon)}{n}. \]
This limit always exists because $\log  \#(\mathcal{X}, d_n, \varepsilon)$ is a subadditive function of $n$.
 The topological entropy $h_\mathrm{top}(T)$ is the limit of $S(\mathcal{X},T,d,\varepsilon)$ as $\varepsilon \to 0$.
When the topological entropy is infinite, we are interested in the growth of $S(\mathcal{X},T,d, \varepsilon)$.
This motivates the definition of upper and lower \textbf{metric mean dimension}:
\begin{align*}
 \overline{\mdim}_\mathrm{M}(\mathcal{X},T,d) &= \limsup_{\varepsilon\to 0} \frac{S(\mathcal{X},T,d,\varepsilon)}{|\log \varepsilon|}, \\
    \underline{\mdim}_\mathrm{M}(\mathcal{X},T,d) &= \liminf_{\varepsilon\to 0} \frac{S(\mathcal{X},T,d,\varepsilon)}{|\log \varepsilon|}. 
\end{align*}
If the limit supremum and infimum agree, we denote the common value by $\mdim_M(\mathcal{X}, T,d)$.

By \cite[Theorem 4.2]{Lindenstrauss--Weiss} the metric mean dimensions always dominate the 
topological mean dimension:
\begin{equation} \label{eq: metric mean dimension dominates mean dimension}
    \mdim(\mathcal{X},T) \leq \underline{\mdim}_\mathrm{M}(\mathcal{X},T,d)  \leq \overline{\mdim}_\mathrm{M}(\mathcal{X},T,d). 
\end{equation}    
It is also known (\cite[Theorem 4.3]{Lindenstrauss}) that if $(\mathcal{X},T)$ is minimal then there exists a distance $d$ on 
$\mathcal{X}$ satisfying
\[ \underline{\mdim}_\mathrm{M}(\mathcal{X},T,d)= \mdim(\mathcal{X},T). \]
It is conjectured that such a distance exists for every system.

Metric mean dimension is not just a theoretical object. 
It is an important tool for computing topological mean dimension.
At least in our experience, it is generally difficult to prove upper bounds on topological mean dimension.
The most powerful method (known to the authors) 
is to use metric mean dimension.
If we obtain an upper bound on metric mean dimension, then we can also bound 
topological mean dimension by the inequality (\ref{eq: metric mean dimension dominates mean dimension}).
The papers \cite{Tsukamoto 1, Tsukamoto 2} employ this method to compute the topological mean dimensions of 
certain dynamical systems in geometric analysis and complex geometry.

The main purpose of this paper is to establish a variational principle connecting rate distortion function to 
metric mean dimension.
Before going further, we look at an example:

\begin{example}  \label{ex: Hilbert cube}
Let $\mathcal{X}=[0,1]^\mathbb{Z}$ be the infinite product of the unit interval, and 
let $T: \mathcal{X}\to \mathcal{X}$ be the shift: $T\left((x_m)_{m\in \mathbb{Z}}\right) = (x_{m+1})_{m\in \mathbb{Z}}$.
We define a distance $d$ on $\mathcal{X}$ by 
\begin{equation}  \label{eq: distance on Hilbert cube}
    d(x,y) = \sum_{m \in \mathbb{Z}} 2^{-|m|} |x_m-y_m|,    \quad 
    \left( x= (x_m)_{m\in \mathbb{Z}}, \, y= (y_m)_{m\in \mathbb{Z}} \right).
\end{equation}
First we calculate the metric mean dimension.
Let $\varepsilon>0$ and set $l =\lceil \log_2(4/\varepsilon)\rceil$.
Then $\sum_{|n|>l} 2^{-|n|} \leq \varepsilon/2$.
We consider an open covering of $[0,1]$ by
\[  I_k = \left( \frac{(k-1)\varepsilon}{12}, \frac{(k+1)\varepsilon}{12} \right), \quad 0\leq k\leq \lfloor 12/\varepsilon\rfloor. \]
$I_k$ has length $\varepsilon/6$.
For $n\geq 1$, consider
\[ [0,1]^\mathbb{Z} = \! \bigcup_{0\leq k_{-l}, \dots, k_{n+l}\leq \lfloor 12/\varepsilon\rfloor} \! 
   \left\{x |\, x_{-l} \in I_{k_{-l}}, x_{-l+1}\in I_{k_{-l+1}}, \dots, x_{n+l}\in I_{k_{n+l}}\right\}. \] 
Each open set in the right-hand side has diameter less than $\varepsilon$ with respect to the distance $d_n$.
Hence 
\begin{equation}  \label{eq: spanning number of Hilbert cube}
     \#([0,1]^\mathbb{Z}, d_n, \varepsilon) \leq \left(1 + \lfloor 12/\varepsilon\rfloor\right)^{n+2l+1}
     = \left(1 + \lfloor 12/\varepsilon\rfloor\right)^{n+2\lceil \log_2(4/\varepsilon)\rceil +1}. 
\end{equation}     
On the other hand, any two distinct points in the sets 
\[ \left\{x\in [0,1]^\mathbb{Z}|\,
     x_m\in \{0,\varepsilon, 2\varepsilon, \dots, \lfloor 1/\varepsilon\rfloor \varepsilon\} \text{ for all $0\leq m<n$} \right\}  \]
have distance $\geq \varepsilon$ with respect to $d_n$.
It follows $\#(\mathcal{X}, d_n, \varepsilon) \geq (1+\lfloor 1/\varepsilon\rfloor)^n$.
Therefore
\[ S(\mathcal{X},T,d,\varepsilon) = \lim_{n\to \infty} \frac{\log \#(\mathcal{X}, d_n, \varepsilon)}{n} \sim |\log \varepsilon|
    \quad (\varepsilon\to 0).  \]
Thus $\mdim_{\mathrm{M}} (\mathcal{X},T,d) = 1$.

Next we consider the rate distortion function for the measure $\mu = \left(\text{Lebesgue measure}\right)^{\otimes \mathbb{Z}}$.
The calculation of $R_\mu(\varepsilon)$ requires some familiarity with mutual information, so we postpone it to Example \ref{ex: continuation of Hilbert cube} in Section \ref{section: mutual information}, 
and here we state only the result:
\begin{equation}  \label{eq: rate distortion function of Hilbert cube}
   R_\mu(\varepsilon) \sim |\log \varepsilon| \quad (\varepsilon\to 0). 
\end{equation}   
Therefore 
\[ \lim_{\varepsilon\to 0} \frac{R_\mu(\varepsilon)}{|\log \varepsilon|} = 1 = \mdim_{\mathrm{M}}(\mathcal{X},T,d). \]
The purpose of this paper is to generalize this phenomena to arbitrary dynamical systems.
\end{example}

For some of our results, we need to introduce a certain regularity condition on the underlying mertic space.
\begin{condition}\label{condition: distance}
 Let $(\mathcal{X},d)$ be a compact metric space. It is said to have \emph{tame growth of covering numbers} if 
 for every $\delta>0$ we have 
 \[ \lim_{\varepsilon\to 0} \varepsilon^\delta \log\#(\mathcal{X},d,\varepsilon) = 0.\]
 Note that this is purely a condition on metric spaces and does not involve the dynamics.
\end{condition}
For example, if $\mathcal{X}$ is a compact subset of the Euclidean space $\mathbb{R}^n$, then
\[  \#(\mathcal{X}, \text{Euclidean distance},\varepsilon) = O((1/\varepsilon)^n), \]
and so $\mathcal{X}$ satisfies Condition \ref{condition: distance}.
Indeed the tame growth of covering numbers condition is a fairly mild condition:

\begin{lemma} \label{lemma: existence of distance}
 Every compact metrizable space admits a distance satisfying Condition~\ref{condition: distance}.
\end{lemma}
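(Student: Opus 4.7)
The plan is to produce the distance by pulling back a suitable metric from the Hilbert cube. By Urysohn's metrization theorem, every compact metrizable space $\mathcal{X}$ admits a topological embedding $\varphi:\mathcal{X}\hookrightarrow [0,1]^{\mathbb{N}}$. So if I can find a distance on $[0,1]^\mathbb{N}$ satisfying the tame growth condition, then $d(x,y):=d_{\mathrm{HC}}(\varphi(x),\varphi(y))$ transfers the condition to $\mathcal{X}$, since any open cover of $[0,1]^\mathbb{N}$ by sets of diameter less than $\varepsilon$ restricts, via $\varphi$, to such a cover of $\mathcal{X}$. Thus
\[
\#(\mathcal{X},d,\varepsilon) \le \#\!\bigl([0,1]^\mathbb{N}, d_{\mathrm{HC}},\varepsilon\bigr).
\]

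The natural candidate on the Hilbert cube is $d_{\mathrm{HC}}(x,y)=\sum_{n\ge 0} 2^{-n}|x_n-y_n|$. The estimation of $\#([0,1]^\mathbb{N},d_{\mathrm{HC}},\varepsilon)$ proceeds exactly as in Example~\ref{ex: Hilbert cube} (with $n=1$ there): choose $l=\lceil\log_2(4/\varepsilon)\rceil$ so that the tail $\sum_{n>l} 2^{-n}$ is at most $\varepsilon/2$, and cover each of the first $l+1$ coordinates by the intervals $I_k=\bigl((k-1)\varepsilon/12,\,(k+1)\varepsilon/12\bigr)$. This yields
\[
 \#\!\bigl([0,1]^\mathbb{N},d_{\mathrm{HC}},\varepsilon\bigr) \;\le\; \bigl(1+\lfloor 12/\varepsilon\rfloor\bigr)^{\lceil\log_2(4/\varepsilon)\rceil+1},
\]
so $\log\#([0,1]^\mathbb{N},d_{\mathrm{HC}},\varepsilon) = O\!\bigl((\log(1/\varepsilon))^2\bigr)$ as $\varepsilon\to 0$.

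The condition then follows immediately: for every fixed $\delta>0$,
\[
 \varepsilon^\delta \log\#(\mathcal{X},d,\varepsilon) \;\le\; \varepsilon^\delta \cdot O\!\bigl((\log(1/\varepsilon))^2\bigr) \;\longrightarrow\; 0,
\]
because any positive power of $\varepsilon$ dominates any polylogarithmic factor. There is no genuine obstacle; the only technical step is the covering estimate for $d_{\mathrm{HC}}$, which is exactly the one already carried out in Example~\ref{ex: Hilbert cube}, so the proof essentially reduces to invoking Urysohn's embedding theorem and recording the pointwise inequality for covering numbers of subspaces.
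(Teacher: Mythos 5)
Your proof is correct and is essentially the paper's own argument: the paper likewise embeds the space into the Hilbert cube (indexed by $\mathbb{Z}$ rather than $\mathbb{N}$, which is immaterial), uses the covering estimate from Example~\ref{ex: Hilbert cube} for the weighted product metric to get $\log\#=O(|\log\varepsilon|^2)$, and concludes via the monotonicity of covering numbers under subspaces.
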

\begin{proof}
Every compact metrizable space can be topologically embedded into the infinite dimensional cube $[0,1]^\mathbb{Z}$,
so it is enough to prove the statement for $[0,1]^\mathbb{Z}$.
Let $d$ be the distance introduced in (\ref{eq: distance on Hilbert cube}).
By (\ref{eq: spanning number of Hilbert cube})
\[ \#([0,1]^\mathbb{Z},d,\varepsilon) \leq 
      \left(1 + \lfloor 12/\varepsilon\rfloor\right)^{2\lceil \log_2(4/\varepsilon)\rceil +2}.  \]
It follows 
\[ \log \#([0,1]^\mathbb{Z},d,\varepsilon)  = O\left(|\log\varepsilon|^2\right). \]
This satisfies  the tame growth of covering numbers condition.     
\end{proof}

\begin{remark} \label{remark: mildness of condition}
It is easy to check that
if $(A,d)$ is a compact metric space satisfying Condition \ref{condition: distance}
then the distance $d'$ on the shift $A^\mathbb{Z}$ defined by 
\[ d'(x,y) = \sum_{n\in \mathbb{Z}} 2^{-|n|} d(x_n,y_n) \]
also satisfies Condition \ref{condition: distance}.
\end{remark}

Our first main result is: 
\begin{theorem} \label{thm: main theorem}
Let $(\mathcal{X},T)$ be a dynamical system with a distance $d$.
Suppose $d$ satisfies Condition \ref{condition: distance}. Then 
\begin{equation} \label{eq: rate distortion variational principle}
  \begin{split}
    \overline{\mdim}_\mathrm{M}(\mathcal{X},T,d) &= 
     \limsup_{\varepsilon\to 0} \frac{\sup_{\mu\in \mathscr{M}^T(\mathcal{X})}R_\mu(\varepsilon)}{|\log\varepsilon|}, \\
     \underline{\mdim}_\mathrm{M}(\mathcal{X},T,d) &= 
     \liminf_{\varepsilon\to 0} \frac{\sup_{\mu\in \mathscr{M}^T(\mathcal{X})}R_\mu(\varepsilon)}{|\log\varepsilon|}.
   \end{split}
\end{equation}   
\end{theorem}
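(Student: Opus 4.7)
I will prove (\ref{eq: rate distortion variational principle}) by establishing the pointwise bound $R_\mu(\varepsilon)\leq S(\mathcal X, T, d, \varepsilon)$ for every $\mu\in\mathscr{M}^T(\mathcal X)$ (one direction), together with a matching lower bound on $\sup_\mu R_\mu(\varepsilon)$ after normalization by $|\log\varepsilon|$. For the easy bound, fix $n$ and an open cover $\{U_1,\dots,U_N\}$ realizing $N=\#(\mathcal X, d_n,\varepsilon)$ with each $U_i$ of $d_n$-diameter $<\varepsilon$, pick $x_i\in U_i$, and set $g(x)=x_i$ for $x\in U_i$. For $X\sim\mu$ put $Y_k:=T^k g(X)$, so $d(T^k X, Y_k)\leq d_n(X, g(X))<\varepsilon$ and (\ref{eq: distortion condition}) holds, while $I(X;Y)\leq H(g(X))\leq\log N$. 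Thus $R_\mu(\varepsilon)\leq(\log\#(\mathcal X, d_n, \varepsilon))/n$ for every $n$, and passing to the limit gives $R_\mu(\varepsilon)\leq S(\mathcal X, T, d, \varepsilon)$.

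For the hard direction, the plan is to construct, for each small $\varepsilon$, an invariant measure witnessing the metric mean dimension at that scale. Choose $n_k\to\infty$ along which $(\log\#(\mathcal X, d_{n_k},\varepsilon))/n_k$ approaches the relevant value of $S(\mathcal X, T, d, \varepsilon)$; take a maximal $\varepsilon$-separated set $E_{n_k}\subset(\mathcal X, d_{n_k})$, so $|E_{n_k}|\geq\#(\mathcal X, d_{n_k}, 2\varepsilon)$; form the uniform measure $\nu_{n_k}$ on $E_{n_k}$ and its orbit average $\mu_{n_k}:=n_k^{-1}\sum_{j=0}^{n_k-1}T^j_*\nu_{n_k}$; and pass to a subsequential weak-$*$ limit $\mu=\mu_\varepsilon\in\mathscr{M}^T(\mathcal X)$.

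To lower-bound $R_\mu(\varepsilon')$ for some $\varepsilon'$ of order $\varepsilon$, fix a finite partition $\alpha$ of $\mathcal X$ with $\mu$-null boundaries and atoms of diameter $<\varepsilon/2$. Since $E_{n_k}$ is $\varepsilon$-separated in $d_{n_k}$, the dynamical refinement $\alpha_0^{n_k-1}:=\bigvee_{j=0}^{n_k-1}T^{-j}\alpha$ distinguishes all points of $E_{n_k}$, so $H_{\nu_{n_k}}(\alpha_0^{n_k-1})=\log|E_{n_k}|$. A Misiurewicz-style argument --- subadditivity of entropy across disjoint length-$q$ blocks, combined with concavity of entropy in the measure --- then yields, for each fixed $q$,
\[ H_{\mu_{n_k}}(\alpha_0^{q-1})\geq \frac{q\log|E_{n_k}|}{n_k}-\frac{q^2\log|\alpha|}{n_k}. \]
Letting $k\to\infty$ and using continuity of $\nu\mapsto H_\nu(\alpha_0^{q-1})$ at $\mu$ (afforded by the $\mu$-null boundary) gives $H_\mu(\alpha_0^{q-1})\geq qS(\mathcal X, T, d, \varepsilon)$. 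Now for any coupling $(X,Y)$ with $X\sim\mu$ satisfying (\ref{eq: distortion condition}) at scale $\varepsilon'$, the data-processing inequality yields $I(X;Y)\geq H_\mu(\alpha_0^{q-1}(X))-H(\alpha_0^{q-1}(X)\mid Y)$, and a coordinate-wise Fano estimate, controlled via Markov's inequality applied to $\sum_k d(T^k X, Y_k)$, bounds the conditional entropy by $q\bigl((\varepsilon'/\delta+\eta)\log\#(\mathcal X, d, \delta)+O(1)\bigr)$, where $\delta<\varepsilon/2$ is any partition scale and $\eta$ is the $\mu$-mass of a $\delta$-collar of $\partial\alpha$. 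Choosing $\delta=\sqrt{\varepsilon'}$ and shrinking $\eta$ appropriately, Condition~\ref{condition: distance} makes this Fano correction $o(q)$ as $\varepsilon'\to 0$, so $R_\mu(\varepsilon')\geq S(\mathcal X, T, d, \varepsilon)-o_\varepsilon(1)$. Taking $\varepsilon'=c\varepsilon$ for a fixed constant $c$ gives $|\log\varepsilon'|\sim|\log\varepsilon|$, and the hard direction follows after $\limsup$/$\liminf$ as $\varepsilon\to 0$.

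The principal obstacle is the mismatch between average-distance distortion (used in $R_\mu$) and maximum-distance separation (used in $d_n$): distinct orbits through $E_{n_k}$ can be $\varepsilon$-far in $d_{n_k}$ at a single witness coordinate while remaining close on average, so no naive coordinate-wise decoding of $X\in E_{n_k}$ from $Y$ is available. The Fano step must therefore absorb a potentially large loss at the rare coordinates where $d(T^k X, Y_k)$ is abnormally big, and Condition~\ref{condition: distance} is precisely the hypothesis that keeps this loss --- proportional to $\log\#(\mathcal X, d, \delta)$ --- of lower order than $|\log\varepsilon|$ after the final normalization.
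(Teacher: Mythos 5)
Your easy direction is fine (it is essentially Lemma \ref{lemma: metric mean dimension dominates rate distortion function}). The gap is in the hard direction, precisely at the Fano step that you describe only in outline. You build $\mu$ from sets that are $\varepsilon$-separated in the \emph{max} metric $d_{n_k}$ and then try to decode the atom of $\alpha(T^kX)$ from $Y_k$ coordinatewise. For that decoding you need $T^kX$ to lie at distance $\geq\delta$ from $\partial\alpha(T^kX)$; but with your choice $\delta=\sqrt{\varepsilon'}$ and $\varepsilon'=c\varepsilon$ you have $\delta\gg\varepsilon$, while the atoms of $\alpha$ have diameter $<\varepsilon/2$, so the $\delta$-collar of $\partial\alpha$ is essentially all of $\mathcal{X}$ and the estimate is vacuous. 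To repair this you must take $\varepsilon'\ll\delta\ll\varepsilon$, and then the term $\eta\log|\alpha|$ becomes the problem: $\eta=\mu_\varepsilon\bigl(\delta\text{-collar of }\partial\alpha_\varepsilon\bigr)$ tends to $0$ as $\delta\to0$ only qualitatively, with no rate, and both $\mu_\varepsilon$ (a weak$^*$ limit you do not control) and $\alpha_\varepsilon$ change with $\varepsilon$, while $\log|\alpha|\approx\log\#(\mathcal{X},d,\varepsilon/2)$ may be much larger than $|\log\varepsilon|$ even under Condition \ref{condition: distance} (tame growth bounds $\varepsilon^{\rho}\log\#$, not $\log\#/|\log\varepsilon|$). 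If you shrink $\delta$ far enough to kill $\eta$, you are forced to shrink $\varepsilon'$ below any power of $\varepsilon$, and then $|\log\varepsilon'|\not\sim|\log\varepsilon|$ and the normalized bound $R_\mu(\varepsilon')/|\log\varepsilon'|$ no longer recovers $S(\mathcal{X},T,d,2\varepsilon)/|\log\varepsilon|$; trying instead to engineer thin boundaries gives at best $\eta\lesssim\#(\mathcal{X},d,\varepsilon)\,\delta/\varepsilon$ by a union bound, and Condition \ref{condition: distance} gives no control of the covering number itself. So the sentence ``shrinking $\eta$ appropriately, Condition \ref{condition: distance} makes this Fano correction $o(q)$'' hides an unresolved quantifier clash; as written the argument does not close.

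The paper resolves exactly this mismatch differently, and it is worth seeing how. The separated sets are taken with respect to the \emph{averaged} metric $\bar{d}_n(x,y)=\frac1n\sum_{k<n}d(T^kx,T^ky)$, so that separation and the distortion constraint live in the same metric; then a single Fano-type estimate (Corollary \ref{cor: uniform distribution over separated set}) applies with no partition collars, because the codebook is the separated set and the error probability is controlled by Markov alone. Since that lemma requires the uniform distribution on the separated set rather than the limit measure, the coupling $(\,\mathcal{P}^m(X),Y)$ attached to $\mu$ is transported back to the empirical measures $\mu_n$ via an optimal-transport composition, and the Misiurewicz-style bookkeeping is done at the level of mutual information (convexity, concavity and subadditivity) rather than Shannon entropy; this yields $R_\mu(\varepsilon)\geq(1-\tfrac1D)\tilde{S}(\mathcal{X},T,d,(12D+4)\varepsilon)$ with \emph{no} use of Condition \ref{condition: distance} (Proposition \ref{prop: construction of invariant measure}). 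Condition \ref{condition: distance} enters only once, in Lemma \ref{lemma: comparison between d_N and d^bar_N}/Lemma \ref{lemma: implication of metric condition}, to show that $\tilde{S}$ (defined via $\bar{d}_n$) and $S$ (via $d_n$) have the same $|\log\varepsilon|$-normalized asymptotics. If you want to salvage your outline, the most direct fix is to switch your $E_{n_k}$ to maximal $\bar{d}_{n_k}$-separated sets and prove the comparison between $\#(\mathcal{X},d_n,\cdot)$ and $\#(\mathcal{X},\bar{d}_n,\cdot)$ separately; your per-coordinate Fano with partition collars is where the proof genuinely breaks.
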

Therefore we can say that metric mean dimension is a topological dynamics counterpart of rate distortion theory.

\begin{remark}
Our formulation of the variational principle (\ref{eq: rate distortion variational principle})
is strongly influenced by the work of Kawabata--Dembo \cite{Kawabata--Dembo}.
For a metric space $A$, they studied connections between the fractal dimensions of $A$ and the rate distortion 
functions of i.i.d. processes taking values in $A$.
Theorem \ref{thm: main theorem} can be seen as a generalization of \cite[Proposition 3.1]{Kawabata--Dembo}
from the case of $(\mathcal{X},T) = (A^\mathbb{Z}, \mathrm{shift})$ to arbitrary dynamical systems.
\end{remark}

Although Condition \ref{condition: distance} is a mild condition, it might still look technical and one might want to remove it.
But indeed the equalities (\ref{eq: rate distortion variational principle}) do \textit{not} hold in general without an 
additional assumption:
\begin{proposition} \label{prop: counter example}
 There exists a dynamical system $(\mathcal{X}, T)$ with a distance $d$ such that 
\[ \mdim_\mathrm{M}(\mathcal{X}, T, d) = \infty, \quad 
   \lim_{\varepsilon\to 0}   \frac{\sup_{\mu\in \mathscr{M}^T(\mathcal{X})}R_\mu(\varepsilon)}{|\log\varepsilon|} = 0.\]
\end{proposition}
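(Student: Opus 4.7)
The plan is to reduce the proposition to producing a dynamical system with very small single-scale covering numbers but very large $d_n$-covering numbers. The key technical lever is the trivial upper bound on the rate distortion function: taking $n=1$ in the definition of $R_\mu(\varepsilon)$ and letting $Y$ be the index of an $\varepsilon$-ball containing $X$ in a minimal $\varepsilon$-covering of $\mathcal{X}$, we get
\[ R_\mu(\varepsilon) \;\le\; H(Y) \;\le\; \log \#(\mathcal{X}, d, \varepsilon) \qquad \text{for every } \mu \in \mathscr{M}^T(\mathcal{X}). \]
So if we can engineer $(\mathcal{X}, T, d)$ with $\log \#(\mathcal{X}, d, \varepsilon) = o(|\log \varepsilon|)$ as $\varepsilon \to 0$, then the second conclusion follows at once, since $\sup_\mu R_\mu(\varepsilon) \le \log \#(\mathcal{X}, d, \varepsilon) = o(|\log \varepsilon|)$.

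The task therefore reduces to constructing a compact dynamical system $(\mathcal{X}, T)$ with a distance $d$ satisfying, simultaneously:
(i) $\log \#(\mathcal{X}, d, \varepsilon) = o(|\log \varepsilon|)$ (the single-step metric is sub-logarithmically thin), and
(ii) $S(\mathcal{X}, T, d, \varepsilon)/|\log \varepsilon| \to \infty$ along some sequence $\varepsilon \to 0$, so that $\mdim_\mathrm{M}(\mathcal{X}, T, d) = \infty$.
Note $d_n \geq d$ forces $\#(\mathcal{X}, d_n, \varepsilon) \ge \#(\mathcal{X}, d, \varepsilon)$, so (ii) demands that the $T$-iterates produce a huge multiplicative factor: the $n$-step covers must be super-polynomial in $1/\varepsilon$ even though the single-step ones are sub-logarithmic.

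I would implement this with an inverse-limit or shift-type construction where the metric ``hides'' most of the base structure at a single time and reveals it only across many iterations. Concretely, take $\mathcal{X}$ to be a shift space on a carefully chosen compact alphabet (or an inverse limit of finite systems) and equip $\mathcal{X}$ with a sup-type metric $d(x,y) = \sup_k w_k \, d_0((T^k x)_{\text{rel}}, (T^k y)_{\text{rel}})$ with weights $w_k$ decaying fast enough that only a sub-logarithmic collection of coordinates is visible at any given $\varepsilon$, but whose dynamics unfolds fresh fine-scale structure at every iterate. Verification of (ii) is then an explicit counting of $(n,\varepsilon)$-separated families that exploits the proliferation of independent coordinates brought in by $T^k$.

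The main obstacle is exactly this tension between (i) and (ii), which is precisely what Condition \ref{condition: distance} (tame growth of covering numbers) rules out: in any ``tame'' example, the same parameters that produce small $\log\#(\mathcal{X},d,\varepsilon)$ also keep $\#(\mathcal{X},d_n,\varepsilon)$ from exploding. The construction must use a metric that violates tame growth flagrantly (so that the iterated covers can compound to super-polynomial size) while still being compatible with a continuous $T$ and preserving compactness of $\mathcal{X}$---the delicate balancing of alphabet size, weight decay, and the expansion rate of $T$ is the crux of the argument.
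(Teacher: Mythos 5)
Your plan cannot work: the two requirements (i) and (ii) that you reduce the proposition to are mutually inconsistent. If $\{U_1,\dots,U_K\}$ is an open cover of $\mathcal{X}$ by sets of $d$-diameter less than $\varepsilon$ with $K=\#(\mathcal{X},d,\varepsilon)$, then the sets $U_{i_0}\cap T^{-1}U_{i_1}\cap\dots\cap T^{-(n-1)}U_{i_{n-1}}$ form an open cover by sets of $d_n$-diameter less than $\varepsilon$, so $\#(\mathcal{X},d_n,\varepsilon)\leq \#(\mathcal{X},d,\varepsilon)^n$ and hence
\[
S(\mathcal{X},T,d,\varepsilon)=\lim_{n\to\infty}\frac{1}{n}\log\#(\mathcal{X},d_n,\varepsilon)\;\leq\;\log\#(\mathcal{X},d,\varepsilon).
\]
Thus your condition (i), $\log\#(\mathcal{X},d,\varepsilon)=o(|\log\varepsilon|)$, forces $S(\mathcal{X},T,d,\varepsilon)/|\log\varepsilon|\to 0$, i.e.\ $\mdim_\mathrm{M}(\mathcal{X},T,d)=0$, contradicting (ii). Conversely, any example with $\mdim_\mathrm{M}=\infty$ must have $\log\#(\mathcal{X},d,\varepsilon)$ growing faster than $|\log\varepsilon|$, so the $n=1$ bound $\sup_\mu R_\mu(\varepsilon)\leq\log\#(\mathcal{X},d,\varepsilon)$, while correct, is useless for the second conclusion. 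No choice of alphabet, weights, or expansion rate can rescue the scheme; the obstruction is the subadditivity above, not a delicate balancing issue.

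The actual mechanism has to exploit a different gap: not between the one-step metric $d$ and the max-metric $d_n$, but between the time-averaged metric $\bar{d}_n(x,y)=\frac{1}{n}\sum_{k<n}d(T^kx,T^ky)$ and the max-metric $d_n$. The rate distortion function is controlled by $\bar{d}_n$ (since the distortion constraint is a time average): one has $R_\mu(\varepsilon)\leq\tilde{S}(\mathcal{X},T,d,\varepsilon)$, the growth exponent of $\#(\mathcal{X},\bar{d}_n,\varepsilon)$, for every invariant $\mu$ (Lemma \ref{lemma: metric mean dimension dominates rate distortion function}), whereas $\mdim_\mathrm{M}$ is defined through $d_n$. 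The paper's example is a union of shift-invariant sets $\mathcal{X}_n\subset A_n^{\mathbb{Z}}$ of sequences whose nonzero entries (drawn from a huge $\frac1n$-separated set $A_n$ in a Hilbert space) occur only along an arithmetic progression of gap $2^n$: under $d_n$ each spike is seen at its own time, making $S(\mathcal{X},T,d,\varepsilon)\gtrsim(\log(1/\varepsilon))^2$ and $\mdim_\mathrm{M}=\infty$, while under $\bar{d}_N$ the sparse spikes average out, so all $\mathcal{X}_n$ with $n\gtrsim\log(1/\varepsilon)$ collapse into a single $\varepsilon$-ball and $\tilde{S}(\mathcal{X},T,d,\varepsilon)=O((\log\log(1/\varepsilon))^2)=o(|\log\varepsilon|)$. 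You would need to redirect your construction toward making $\bar{d}_n$-covers small while $d_n$-covers explode, which is a genuinely different target from the one you set.
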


\begin{remark}  \label{remark: why technical condition}
In the proof of Theorem \ref{thm: main theorem}, we use
Condition \ref{condition: distance} to compare the two distances 
\begin{equation}  \label{eq: two distances}
   \frac{1}{n} \sum_{k=0}^{n-1} d(T^k x, T^k y)  \quad \text{and} \quad 
   \max_{0\leq k<n} d(T^k x, T^k y). 
\end{equation}   
The former is closely related to the distortion condition (\ref{eq: distortion condition}) in the definition of
rate distortion function. The latter is used in the definition of metric mean dimension.
Under Condition \ref{condition: distance}, these two distances behave quite similarly.
A rough idea of the proof of Proposition \ref{prop: counter example} is to construct a system $(\mathcal{X},T)$
where the two distances (\ref{eq: two distances}) show radically different behaviors.
\end{remark}

The above definition of the rate distortion function $R_\mu(\varepsilon)$, or the similar $L^2$-rate distortion function defined in \S\ref{subsection: L^p variants}, seems to be the most widely used one.
It has from our point of view the disadvantage that in this case we need to assume Condition \ref{condition: distance} for establishing the variational principle
(\ref{eq: rate distortion variational principle}).
Next we propose another version of rate distortion function and establish a corresponding variational principle without any additional condition.

Let $(\mathcal{X},T)$ be a dynamical system with a distance $d$ and an invariant probability measure $\mu$.
For positive numbers $\varepsilon$ and $\alpha$
we define the \textbf{$L^\infty$-rate distortion function} $\tilde{R}_\mu(\varepsilon, \alpha)$
as the infimum of 
\[ \frac{I(X;Y)}{n},   \]
where $n$ runs over all natural numbers,
and $X$ and $Y=(Y_0,\dots,Y_{n-1})$ are random variables defined on some probability space 
$(\Omega, \mathbb{P})$ such that 
\begin{itemize}
   \item $X$ takes values in $\mathcal{X}$, and its law is given by $\mu$.
   \item Each $Y_k$ takes values in $\mathcal{X}$, and they satisfy the following \textit{modified distortion condition}:
    \begin{equation}  \label{eq: modified distortion condition}
     \mathbb{E}\left(\text{the number of $k \in [0,n-1]$ satisfying $d(T^k X, Y_k)\geq \varepsilon$}\right)  < \alpha n. 
    \end{equation} 
\end{itemize}
   
\noindent
\medskip
In other words, we define $\tilde{R}_\mu(\varepsilon,\alpha)$ by replacing the distortion condition (\ref{eq: distortion condition})
in the definition of $R_\mu(\varepsilon)$ with (\ref{eq: modified distortion condition}).
We set 
\[  \tilde{R}_\mu(\varepsilon) = \lim_{\alpha\to 0} \tilde{R}_\mu(\varepsilon, \alpha). \]
The reason for our use of terminology ``$L^\infty$-rate distortion function'' will (hopefully) become clearer to the reader in the next subsection.

Our second main result is:

\begin{theorem}  \label{thm: second main theorem} 
For any dynamical system $(\mathcal{X},T)$ with a distance $d$, we have
\begin{equation} \label{eq: second rate distortion variational principle}
  \begin{split}
    \overline{\mdim}_\mathrm{M}(\mathcal{X},T,d) &= 
     \limsup_{\varepsilon\to 0} \frac{\sup_{\mu\in \mathscr{M}^T(\mathcal{X})}\tilde{R}_\mu(\varepsilon)}{|\log\varepsilon|}, \\
     \underline{\mdim}_\mathrm{M}(\mathcal{X},T,d) &= 
     \liminf_{\varepsilon\to 0} \frac{\sup_{\mu\in \mathscr{M}^T(\mathcal{X})}\tilde{R}_\mu(\varepsilon)}{|\log\varepsilon|}.
   \end{split}
\end{equation} 
\end{theorem}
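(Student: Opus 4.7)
\medskip

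\textbf{Overall strategy.} I would separate the variational principle into two inequalities:
\[
\sup_{\mu\in\mathscr{M}^T(\mathcal{X})}\tilde R_\mu(\varepsilon)\ \le\ S(\mathcal{X},T,d,\varepsilon)
\qquad\text{and}\qquad
\sup_{\mu\in\mathscr{M}^T(\mathcal{X})}\tilde R_\mu(\varepsilon)\ \gtrsim\ S(\mathcal{X},T,d,c\varepsilon)
\]
for some universal constant $c>1$ (say $c=3$), then divide by $|\log\varepsilon|$ and take $\limsup$ and $\liminf$ as $\varepsilon\to 0$. Since $|\log(c\varepsilon)|\sim|\log\varepsilon|$ the constant $c$ disappears in the ratio, and no regularity hypothesis on $d$ is required.

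\textbf{Upper bound (easy direction).} Fix $n$ and an invariant $\mu$. Let $\{U_1,\dots,U_N\}$ be a minimum cover of $(\mathcal{X},d_n)$ by sets of diameter $<\varepsilon$, so $N=\#(\mathcal{X},d_n,\varepsilon)$. Pick representatives $u_i\in U_i$ and define a Borel map $\phi:\mathcal{X}\to\{u_1,\dots,u_N\}$ with $d_n(x,\phi(x))<\varepsilon$ for all $x$. For $X\sim\mu$ set $Y_k:=T^k\phi(X)$. Then $d(T^kX,Y_k)<\varepsilon$ almost surely for every $0\le k<n$, so the modified distortion condition holds with $\alpha=0$. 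Since $Y$ is a deterministic function of $\phi(X)$,
\[
I(X;Y)\ \le\ H(\phi(X))\ \le\ \log N\ =\ \log\#(\mathcal{X},d_n,\varepsilon).
\]
Thus $\tilde R_\mu(\varepsilon)\le\tilde R_\mu(\varepsilon,0)\le\tfrac{1}{n}\log\#(\mathcal{X},d_n,\varepsilon)$ for every $n$; letting $n\to\infty$ gives the desired upper bound. This is independent of $\mu$.

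\textbf{Lower bound (hard direction).} I would mimic Misiurewicz's construction from the classical variational principle. For each large $n$ let $E_n\subset\mathcal{X}$ be an $(n,3\varepsilon)$-separated set of maximal cardinality, so $\tfrac{1}{n}\log|E_n|$ is, up to vanishing terms, comparable to $S(\mathcal{X},T,d,3\varepsilon)$. Form the non-invariant measure $\nu_n:=\tfrac{1}{|E_n|}\sum_{x\in E_n}\delta_x$, its invariantisation $\bar\mu_n:=\tfrac{1}{n}\sum_{k=0}^{n-1}T_*^k\nu_n$, and let $\mu$ be a weak-$*$ accumulation point of $\{\bar\mu_n\}$; then $\mu\in\mathscr{M}^T(\mathcal{X})$. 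To estimate $\tilde R_\mu(\varepsilon,\alpha)$ from below I would consider any block length $m$ and any $Y=(Y_0,\dots,Y_{m-1})$ satisfying the modified distortion condition for $\mu$, and apply a Fano-type argument: choosing $m=n$ and averaging over cyclic shifts uses invariance of $\mu$ to pull the block distribution back towards the "ensemble" $\{(x,Tx,\dots,T^{n-1}x):x\in E_n\}$. By Markov, outside a set of $Y$ of probability $2\alpha/\beta$, fewer than $\beta n$ coordinates have $d(T^kX,Y_k)\ge\varepsilon$; on this good event the decoder that returns the $x\in E_n$ maximising $\#\{k:d(T^kx,Y_k)<\varepsilon\}$ uniquely recovers $X$ provided $3\varepsilon$-separation prevents two distinct elements of $E_n$ from agreeing with $Y$ on more than $(1-\beta)n$ coordinates. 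Fano's inequality then yields
\[
\frac{I(X;Y)}{n}\ \ge\ \Bigl(1-\tfrac{2\alpha}{\beta}\Bigr)\frac{\log|E_n|}{n}\ -\ \frac{H_2(2\alpha/\beta)}{n},
\]
so sending $n\to\infty$, $\alpha\to 0$ and optimising $\beta$ gives $\tilde R_\mu(\varepsilon)\gtrsim S(\mathcal{X},T,d,3\varepsilon)$, as required.

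\textbf{Main obstacle.} The technical heart is making the Fano/decoder step work with a genuinely $T$-invariant measure rather than with $\bar\mu_n$ or with the block distribution on $\mathcal{X}^n$, because mutual information is only lower semicontinuous under weak-$*$ limits "in the wrong direction". I would handle this by not passing to the limit too early: establish the Fano inequality at the level of $\bar\mu_n$ (or of the ensemble $\{(x,\dots,T^{n-1}x)\}$), use invariance of $\mu$ and the shift averaging in the definition of $\bar\mu_n$ to show that any $Y$ admissible for $\mu$ yields, through reshifting, a comparable $Y^{(n)}$ admissible for $\bar\mu_n$ with only a slightly relaxed $\alpha$, and only then invoke the combinatorial separation of $E_n$. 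The step that requires the most care is controlling how the slight non-invariance of $\bar\mu_n$ (of order $O(1/n)$) is absorbed into the modified distortion parameter $\alpha$ without losing the leading-order $\log|E_n|/n$ term; this is precisely where the $L^\infty$ formulation of distortion pays off, because a fixed additive increase in the fraction of bad coordinates is exactly what the condition (\ref{eq: modified distortion condition}) tolerates.
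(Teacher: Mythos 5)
Your upper bound is correct and is the same argument as the paper's. The lower bound, however, has a genuine gap at its central step: the decoder/Fano argument as you state it does not work. Since $d_n$ is a sup metric, $3\varepsilon$-separation of two distinct points $x,x'\in E_n$ only guarantees \emph{one} coordinate $k_0$ with $d(T^{k_0}x,T^{k_0}x')\geq 3\varepsilon$; the two orbits may coincide at every other coordinate, so both can be $\varepsilon$-close to $Y$ on all but one coordinate. Hence your proviso (``$3\varepsilon$-separation prevents two distinct elements of $E_n$ from agreeing with $Y$ on more than $(1-\beta)n$ coordinates'') is false, the maximum-agreement decoder need not recover $X$ even when there are no bad coordinates, and plain Fano with error probability $2\alpha/\beta$ cannot be invoked. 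The paper's substitute (Lemma \ref{lemma: uniform distribution over separated set, second form}) first discretizes $\mathcal{X}$ by a partition of diameter $<\varepsilon$ with $\mu$-null boundaries, so that the codewords live in a finite alphabet $A$, then conditions on the random bad set $Z$ and bounds $H(X\mid Y,Z=E)\leq |E|\log|A|$ (among separated words that are $\varepsilon$-close to $Y$ off $E$, the restriction to $E$ is injective). This yields $I(X;Y)\geq \log|S|-nH(\alpha)-\alpha n\log|A|$, with a penalty $\alpha n\log|A|$ rather than exact decoding; the penalty is killed by letting $\alpha\to 0$ with $\varepsilon$ (hence $|A|$) fixed, which is precisely why the $L^\infty$ rate distortion function is defined as $\lim_{\alpha\to 0}\tilde R_\mu(\varepsilon,\alpha)$. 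Without a finite alphabet no bound of this kind is available, and your sketch contains neither the discretization nor the conditioning on $Z$.

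The second gap is the transfer between the limit measure and the empirical ensemble, which you acknowledge as the ``main obstacle'' but only assert can be handled ``by reshifting''. Note first that you cannot ``choose $m=n$'': in $\tilde R_\mu(\varepsilon,\alpha)$ the block length $m$ and the channel are given adversarially, while $\mu$ is already fixed as a subsequential limit of the $\bar\mu_n$, so the bound must be proved for every $m$ with $n\to\infty$ independently. Moreover the issue is not the $O(1/n)$ non-invariance of $\bar\mu_n$: the measures $\mu$ and $\bar\mu_n$ are simply different (possibly mutually singular), and weak$^*$ proximity by itself does not allow you to apply the conditional law $\mathbb{P}(Y\in\cdot\mid X=x)$, defined $\mu$-a.e., to an input distributed according to $\bar\mu_n$ while keeping control of both the distortion and the mutual information. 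The paper's route is: discretize (so that mutual information and the distortion functional become weak$^*$-continuous quantities on a finite alphabet), couple $\mathcal{P}^m_*\mu_n$ to $\mathcal{P}^m_*\mu$ by an optimal transference plan whose convergence to the diagonal is proved in the Appendix, compose to get a channel for $\mu_n$, write $n=qm+r$ and concatenate shifted copies of this channel to get an $n$-block channel for the uniform measure $\nu_n$ on $S_n$, and compare the mutual informations via the convexity/concavity and subadditivity of $I$ before applying the Fano-type lemma and passing to the limit with Lemma \ref{lemma: convergence of mutual information}. Your proposal has the right Misiurewicz-style skeleton (empirical measures from maximal separated sets, a Fano-type estimate, late passage to the limit), but these two ingredients — the finite-alphabet bad-set entropy bound and the coupling-plus-concatenation transfer for arbitrary $m$ — are exactly what carries the proof, and they are missing.
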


\noindent
We emphasize that we do not need any additional condition for establishing (\ref{eq: second rate distortion variational principle}) in this case.

\subsection{$L^p$-variants} \label{subsection: L^p variants}

We can also consider $L^p$-versions of the variational principle.
The $L^2$-case might be of special interest because it is related to the least squares method.
Let $(\mathcal{X},T)$ be a dynamical system with a distance $d$.
For $1\leq p<\infty$, $\varepsilon>0$ and $\mu \in \mathscr{M}^T(\mathcal{X})$ 
we define the \textbf{$L^p$-rate distortion function} $R_{\mu,p}(\varepsilon)$ by replacing the distortion condition 
(\ref{eq: distortion condition}) in the definition of $R_\mu(\varepsilon)$ with 
\begin{equation}  \label{eq: L^p distortion condition}
    \mathbb{E}\left(\frac{1}{n}\sum_{k=0}^{n-1}d(T^k X, Y_k)^p\right) < \varepsilon^p. 
\end{equation}    
By the H\"older inequality, this is stronger than (\ref{eq: distortion condition}),
hence $R_\mu(\varepsilon) \leq R_{\mu,p}(\varepsilon)$.
On the other hand, the condition (\ref{eq: L^p distortion condition}) is essentially weaker than (\ref{eq: modified distortion condition})
in the definition of $\tilde{R}_\mu(\varepsilon)$. 
Indeed 
\begin{multline*} \frac{1}{n}\sum_{k=0}^{n-1}d(T^k X,Y_k)^p  \leq \\
\varepsilon^p + \left(\diam(\mathcal{X},d)\right)^p \cdot \frac{1}{n}
   \cdot |\{k\in [0, n-1]|\, d(T^k X, Y_k) \geq \varepsilon\}|.  
\end{multline*}
So the condition (\ref{eq: modified distortion condition}) implies  
\[    \mathbb{E}\left(\frac{1}{n}\sum_{k=0}^{n-1}d(T^k X,Y_k)^p\right)  < \varepsilon^p + \alpha  \left(\diam(\mathcal{X},d)\right)^p . \]
This leads to $R_{\mu,p}(\varepsilon') \leq \tilde{R}_\mu(\varepsilon)$ for any $\varepsilon'>\varepsilon$.
Thus we get 
\[  R_\mu(\varepsilon') \leq R_{\mu,p}(\varepsilon') \leq \tilde{R}_\mu(\varepsilon) \quad \text{for any $\varepsilon'>\varepsilon>0$}. \] 
Therefore Theorems \ref{thm: main theorem} and \ref{thm: second main theorem} imply

\begin{corollary}
If the distance $d$ satisfies Condition \ref{condition: distance}, then for any $p\geq 1$
 \begin{equation*} 
  \begin{split}
    \overline{\mdim}_\mathrm{M}(\mathcal{X},T,d) &= 
     \limsup_{\varepsilon\to 0} \frac{\sup_{\mu\in \mathscr{M}^T(\mathcal{X})}R_{\mu,p}(\varepsilon)}{|\log\varepsilon|}, \\
     \underline{\mdim}_\mathrm{M}(\mathcal{X},T,d) &= 
     \liminf_{\varepsilon\to 0} \frac{\sup_{\mu\in \mathscr{M}^T(\mathcal{X})}R_{\mu,p}(\varepsilon)}{|\log\varepsilon|}.
   \end{split}
\end{equation*}   
\end{corollary}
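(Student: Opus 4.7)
The plan is to deduce the corollary from Theorems \ref{thm: main theorem} and \ref{thm: second main theorem} by a straightforward sandwich argument, using the chain of inequalities
\[
R_\mu(\varepsilon') \leq R_{\mu,p}(\varepsilon') \leq \tilde{R}_\mu(\varepsilon) \quad\text{for all } 0<\varepsilon<\varepsilon'
\]
established in the discussion preceding the statement. Taking suprema over $\mu \in \mathscr{M}^T(\mathcal{X})$ preserves these inequalities, so dividing by $|\log \varepsilon'|$ and letting $\varepsilon'\to 0$ squeezes the middle quantity between objects whose $\limsup$ and $\liminf$ have already been identified.

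For the lower bound, the inequality $R_\mu(\varepsilon')\leq R_{\mu,p}(\varepsilon')$ together with Theorem \ref{thm: main theorem} (which uses Condition \ref{condition: distance}) immediately yields
\[
\overline{\mdim}_\mathrm{M}(\mathcal{X},T,d) \leq \limsup_{\varepsilon\to 0}\frac{\sup_{\mu\in \mathscr{M}^T(\mathcal{X})} R_{\mu,p}(\varepsilon)}{|\log\varepsilon|},
\]
and the analogous inequality with $\liminf$ in place of $\limsup$ on both sides. For the matching upper bound, I would specialize to $\varepsilon = \varepsilon'/2$ in the second inequality, obtaining
\[
\frac{\sup_\mu R_{\mu,p}(\varepsilon')}{|\log\varepsilon'|} \leq \frac{\sup_\mu \tilde{R}_\mu(\varepsilon'/2)}{|\log(\varepsilon'/2)|}\cdot\frac{|\log(\varepsilon'/2)|}{|\log\varepsilon'|}.
\]
Since $|\log(\varepsilon'/2)|/|\log\varepsilon'| \to 1$ as $\varepsilon'\to 0$, taking $\limsup$ (respectively $\liminf$) and invoking Theorem \ref{thm: second main theorem} closes the sandwich without any loss.

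I do not foresee any substantive obstacle here: all of the analytic content is packaged inside the two main theorems (Condition \ref{condition: distance} enters only through Theorem \ref{thm: main theorem}), and the relation of the $L^p$-distortion criterion to the $L^1$- and $L^\infty$-criteria is the routine H\"older/truncation argument already carried out in the text just above the statement. The only small point requiring care is the strict inequality $\varepsilon<\varepsilon'$ in the comparison $R_{\mu,p}(\varepsilon')\leq \tilde R_\mu(\varepsilon)$, but this costs nothing after division by $|\log\varepsilon'|$ since $|\log(c\varepsilon')|/|\log\varepsilon'|\to 1$ for any constant $0<c<1$.
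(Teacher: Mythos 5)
Your proposal is correct and is essentially the paper's own argument: the authors establish exactly the chain $R_\mu(\varepsilon') \leq R_{\mu,p}(\varepsilon') \leq \tilde{R}_\mu(\varepsilon)$ for $\varepsilon'>\varepsilon$ in the discussion preceding the statement and then deduce the corollary by sandwiching between Theorems \ref{thm: main theorem} and \ref{thm: second main theorem}. Your handling of the strict inequality via $\varepsilon=\varepsilon'/2$ and the ratio $|\log(\varepsilon'/2)|/|\log\varepsilon'|\to 1$ is the same routine bookkeeping the paper leaves implicit.
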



\subsection{Comments on the proofs and the organization of the paper} \label{subsection: comments}

The uniform distribution on the set $\{1,2,\dots,n\}$ has entropy $\log n$, and this is the maximal entropy measure 
among all probability distributions on it.
There exists a similar result about mutual information $I(X;Y)$:
Roughly speaking, if $X$ is uniformly distributed over an $\varepsilon$-separated set $S$ of a compact metric space~$\mathcal{X}$, and if~$\varepsilon^{-1}{\mathbb{E}\,\left(d(X,Y)\right)}$ is sufficiently small, then 
$I(X;Y)$ is almost equal to $\log |S|$
(for precise statements, see Corollary \ref{cor: uniform distribution over separated set} and
Lemma \ref{lemma: uniform distribution over separated set, second form} below).
This observation is key to the proofs of Theorems \ref{thm: main theorem} and \ref{thm: second main theorem}. 
Starting from this, we will follow a line of ideas analogous to 
Misiurewicz's proof \cite{Misiurewicz} of the variational principle (\ref{eq: variational principle}).
Misiurewicz's argument adapts quite naturally (perhaps even suprisingly so)
to the setting of rate distortion theory.

Organization of the paper is as follows: 
We recall some basics of mutual information in Section \ref{section: mutual information}.
Theorems \ref{thm: main theorem} and \ref{thm: second main theorem} are proved in Sections \ref{section: proof of main theorem}
and \ref{section: proof of second main theorem} respectively.
We prove Proposition~\ref{prop: counter example} in Section~\ref{section: proof of Proposition}.
We recall some elementary results on optimal transport (which are used in Sections~\ref{section: proof of main theorem}
and \ref{section: proof of second main theorem}) in the Appendix.

\vspace{0.2cm}

\textbf{Acknowledgement.}
We would like to thank Professor Benjamin Weiss for helpful discussions. The second named author would like to thank Professors Kazumasa Kuwada and Shinichi Ohta for advice on optimal transport.
This paper was written while the sescond named author stayed in the Einstein Institute of Mathematics in the Hebrew University of 
Jerusalem.
He would like to thank all the Institute staff for their hospitality.

\section{Mutual information} \label{section: mutual information}

In this section we recall some basic properties of mutual information.
A good reference is Cover--Thomas \cite[Chapter 2]{Cover--Thomas}.

Throughout this section $(\Omega,\mathbb{P})$ is a probability space.
Let $\mathcal{X}$ and $\mathcal{Y}$ be measurable spaces, and 
$X:\Omega\to \mathcal{X}$ and $Y:\Omega\to \mathcal{Y}$ measurable maps.
We define the \textbf{mutual information} $I(X;Y)$ as the supremum of 
\begin{equation} \label{eq: definition of mutual information}
   \sum_{m=1}^M \sum_{n=1}^N  \mathbb{P}\left((X,Y)\in P_m\times Q_n\right) 
   \log\frac{\mathbb{P}\left((X,Y)\in P_m\times Q_n\right)}{\mathbb{P}(X\in P_m) \mathbb{P}(Y\in Q_n)}, 
\end{equation}   
where $\{P_1,\dots,P_M\}$ and $\{Q_1,\dots,Q_N\}$ are partitions of $\mathcal{X}$ and $\mathcal{Y}$ respectively, with the convention that $0\log (0/a) =0$ for all $a\geq 0$.
The mutual information $I(X;Y)$ is nonnegative and symmetric: $I(X;Y) = I(Y;X) \geq 0$.

If $\mathcal{X}$ and $\mathcal{Y}$ are finite sets, then 
\begin{equation}  \label{eq: mutual information of discrete random variables}
   \begin{split}
    I(X;Y) &= \sum_{x\in \mathcal{X},\, y\in \mathcal{Y}} \mathbb{P}(X=x, Y=y) 
                \log \frac{\mathbb{P}(X=x,Y=y)}{\mathbb{P}(X=x) \mathbb{P}(Y=y)} \\
            &= H(X)-H(X|Y) = H(X)+H(Y)-H(X,Y), 
    \end{split}
\end{equation}            
where $H(X|Y)$ is the conditional entropy of $X$ given $Y$.
The formula $I(X;Y) = H(X)-H(X|Y)$ shows an intuitive meaning of mutual information;
it is the amount of information which the random variables $X$ and $Y$ share.

The following two lemmas are trivial but important in the proofs of the main theorems.

\begin{lemma}  \label{lemma: convergence of mutual information}
Suppose $\mathcal{X}$ and $\mathcal{Y}$ are finite sets.
Let $(X_n,Y_n):\Omega \to \mathcal{X}\times \mathcal{Y}$ $(n\geq 1)$ be a sequence of measurable maps converging to
$(X,Y):\Omega\to \mathcal{X}\times \mathcal{Y}$ in law.
Then $I(X_n;Y_n)$ converges to $I(X;Y)$.
\end{lemma}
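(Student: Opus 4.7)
The plan is to use the closed-form expression (\ref{eq: mutual information of discrete random variables}) for $I(X;Y)$ on finite alphabets and argue that each summand is continuous in the joint distribution. Since $\mathcal{X}$ and $\mathcal{Y}$ are finite, convergence in law of $(X_n, Y_n)$ to $(X,Y)$ is equivalent to pointwise convergence of the joint probabilities $p_n(x,y) := \mathbb{P}(X_n = x, Y_n = y) \to p(x,y) := \mathbb{P}(X=x,Y=y)$ for every $(x,y) \in \mathcal{X} \times \mathcal{Y}$. Summing over one coordinate, the marginals $p_n(x) := \mathbb{P}(X_n = x)$ and $p_n(y) := \mathbb{P}(Y_n = y)$ also converge to $p(x)$ and $p(y)$. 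Thus it suffices to show
\[
p_n(x,y)\log\frac{p_n(x,y)}{p_n(x)\,p_n(y)} \longrightarrow p(x,y)\log\frac{p(x,y)}{p(x)\,p(y)}
\]
for each fixed $(x,y)$, using the convention $0\log(0/a) = 0$.

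The proof splits into two cases. First, if $p(x,y) > 0$, then $p(x), p(y) > 0$ as well, and for all sufficiently large $n$ the quantities $p_n(x,y)$, $p_n(x)$, $p_n(y)$ are bounded away from zero. The summand is then a continuous function of the arguments in a neighborhood of the limit, so convergence is immediate.

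The only real work is the second case, $p(x,y) = 0$, where the limiting summand is $0$ but $p_n(x,y)$ may be strictly positive. Here I would expand
\[
p_n(x,y)\log\frac{p_n(x,y)}{p_n(x)\,p_n(y)} = p_n(x,y)\log p_n(x,y) - p_n(x,y)\log p_n(x) - p_n(x,y)\log p_n(y).
\]
The first term tends to $0$ because $t \mapsto t\log t$ extends continuously to $t=0$. For the second term, use $p_n(x,y)\leq p_n(x)$: when $p(x) = 0$ this gives
$|p_n(x,y)\log p_n(x)| \leq |p_n(x)\log p_n(x)| \to 0$, while if $p(x) > 0$, then $p_n(x)$ is bounded away from $0$ and $p_n(x,y) \to 0$ forces $p_n(x,y)\log p_n(x) \to 0$. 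The third term is handled symmetrically. Summing the finitely many indices $(x,y)$ yields $I(X_n; Y_n) \to I(X;Y)$. I don't foresee any serious obstacle beyond carefully tracking the boundary behavior described above.
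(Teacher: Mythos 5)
Your proof is correct and follows essentially the same route as the paper, which simply observes that the claim follows from the finite-alphabet formula \eqref{eq: mutual information of discrete random variables}: mutual information is a finite sum of terms depending continuously on the joint distribution, and your case analysis (including the boundary behavior via $t\log t\to 0$ and $p_n(x,y)\leq p_n(x)$) just makes that continuity explicit.
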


\begin{proof}
This follows from the first equation of (\ref{eq: mutual information of discrete random variables}).
\end{proof}

\begin{lemma}[Data-processing inequality] \label{lemma: data-processing inequality}
Let $\mathcal{X},\mathcal{Y},\mathcal{Z}$ be measurable spaces, and
$X:\Omega\to \mathcal{X}$ and $Y:\Omega\to \mathcal{Y}$ measurable maps.
Let $f:\mathcal{Y}\to \mathcal{Z}$ be a measurable map. Then\footnote{Indeed data-processing inequality is a more general 
statement; see \cite[Section 2.8]{Cover--Thomas}. But we need only this statement here.} 
\[ I(X; f(Y)) \leq I(X;Y). \]
\end{lemma}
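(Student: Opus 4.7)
The plan is to reduce the inequality directly to the variational definition of mutual information given in equation (\ref{eq: definition of mutual information}). The key observation is that any finite measurable partition of $\mathcal{Z}$ pulls back through $f$ to a finite measurable partition of $\mathcal{Y}$, and this pullback does not change the joint probabilities that appear in the defining sum.

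More precisely, I would first fix an arbitrary pair of finite measurable partitions $\{P_1,\dots,P_M\}$ of $\mathcal{X}$ and $\{R_1,\dots,R_K\}$ of $\mathcal{Z}$. Since $f$ is measurable, $Q_k := f^{-1}(R_k)$ defines a finite measurable partition $\{Q_1,\dots,Q_K\}$ of $\mathcal{Y}$. Then I would note the tautological identities
\[
\mathbb{P}\bigl((X,f(Y))\in P_m\times R_k\bigr)=\mathbb{P}\bigl((X,Y)\in P_m\times Q_k\bigr),
\quad
\mathbb{P}(f(Y)\in R_k)=\mathbb{P}(Y\in Q_k),
\]
so that the sum (\ref{eq: definition of mutual information}) computed for $(X,f(Y))$ with partitions $(\{P_m\},\{R_k\})$ is literally equal to the corresponding sum for $(X,Y)$ with partitions $(\{P_m\},\{Q_k\})$.

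The latter sum is, by definition, bounded above by $I(X;Y)$. Taking the supremum over all pairs $(\{P_m\},\{R_k\})$ on the left-hand side then yields $I(X;f(Y))\leq I(X;Y)$, which is the desired inequality.

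There is no serious obstacle here: the whole argument is an unpacking of definitions, and the only thing to verify is that pulling back through a measurable $f$ preserves the $\sigma$-algebra structure, which is immediate. The slightly subtle point to keep in mind—but only a conceptual one—is that the inequality is genuinely one-sided because not every measurable partition of $\mathcal{Y}$ is of the form $\{f^{-1}(R_k)\}$; one is only taking a supremum over a subclass of partitions on the right, which is exactly why the bound goes in the stated direction.
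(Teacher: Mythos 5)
Your argument is correct and is exactly what the paper means by ``this immediately follows from the definition'': partitions of $\mathcal{Z}$ pull back via $f$ to partitions of $\mathcal{Y}$, so the supremum defining $I(X;f(Y))$ is taken over a subcollection of the terms defining $I(X;Y)$. You have simply spelled out the one-line observation the paper leaves implicit.
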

\begin{proof}
This immediately follows from the definition of $I(X;Y)$.
\end{proof}

\begin{remark}  \label{remark: reduction to discrete case}
Lemma \ref{lemma: data-processing inequality} implies that, 
in the definition \eqref{eq: definition of rate distortion function} of the rate distortion function $R_\mu(\varepsilon)$, 
we can assume that 
the random variable $Y$ there takes only finitely many values, namely that its distribution is supported on a finite set. 
Indeed, let $X$ and $Y$ be as in~\eqref{eq: definition of rate distortion function} and~\eqref{eq: distortion condition}.
Take a sufficiently fine partition $\mathcal{P}$ of $\mathcal{X}$ and
for each atom of $A$ of $\mathcal{P}$ choose one point $p_A \in A$.
Define $f:\mathcal{X}\to \mathcal{X}$ by $f(A)= \{p_A\}$, and set $Z = (Z_0,\dots,Z_{n-1}) = (f(Y_0),\dots,f(Y_{n-1}))$. Then 
\begin{align*} \mathbb{E}\left(\frac{1}{n}\sum_{k=0}^{n-1}d\left(T^k X, Z_k \right) \right)
  & \leq \max_{A\in \mathcal{P}} \diam (A) + \mathbb{E}\left(\frac{1}{n}\sum_{k=0}^{n-1}d\left(T^k X, Y_k\right) \right) \\
  &< \varepsilon
 \end{align*}
if $\mathcal{P}$ is sufficiently fine.   Hence $Z$ satisfies the distortion condition (\ref{eq: distortion condition}).
Lemma \ref{lemma: data-processing inequality} implies 
\[ I(X;Z) \leq I(X;Y). \]
The random variable $Z$ obviously takes only finitely many values.

Similarly we can also assume that $Y$ takes only finitely many values in the definition of $\tilde{R}_\mu(\varepsilon,\alpha)$:
Suppose $Y$ satisfies the modified distortion condition (\ref{eq: modified distortion condition}).
Then we can find $0<\varepsilon'<\varepsilon$ satisfying 
\[ \mathbb{E}\left(\text{number of $0\leq k \leq n-1$ satisfying $d(T^k X,Y_k)\geq \varepsilon'$}\right) <\alpha n.\]
If the partition $\mathcal{P}$ is sufficiently fine, then for $Z_k$ as above
\begin{multline*}
    \mathbb{E}\left(\text{number of $0\leq k \leq n-1$ satisfying $d(T^k X, Z_k)\geq \varepsilon$}\right) \\
	\begin{aligned}
    &\leq   \mathbb{E}\left(\text{number of $0\leq k \leq n-1$ satisfying $d(T^k X,Y_k)\geq \varepsilon'$}\right)\\
    &<\alpha n.
   \end{aligned}
\end{multline*}
\end{remark}

\noindent
For real numbers $0\leq p\leq 1$ we set $H(p) = -p\log p -(1-p)\log (1-p)$ (with $H(0)=H(1)=0$). 

\begin{lemma}[Fano's inequality]  \label{lemma: Fano's inequality}
Suppose $\mathcal{X}$, $\mathcal{Y}$ and $\mathcal{Z}$ are finite sets.
Let $f:\mathcal{Y}\to \mathcal{Z}$ be a map, and let $X:\Omega\to \mathcal{X}$ and $Y:\Omega\to \mathcal{Y}$ be measurable maps.
Set $P_e = \mathbb{P}(X\neq f(Y))$ (the probability of error).  Then\footnote{As in the case of data-processing inequality, 
Fano's inequality is more general than this statement; see \cite[Section 2.10]{Cover--Thomas}.}
\[  H(X|Y) \leq H(P_e) + P_e \log |\mathcal{X}|. \]
\end{lemma}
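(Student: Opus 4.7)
The plan is to introduce the error indicator random variable $E = \mathbf{1}_{\{X \neq f(Y)\}}$, so that $\mathbb{P}(E=1) = P_e$, and then exploit the chain rule for conditional entropy in two different ways on the joint variable $(E, X)$ conditioned on $Y$.

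First, I would write
\[
H(E, X \mid Y) \;=\; H(X \mid Y) + H(E \mid X, Y) \;=\; H(E \mid Y) + H(X \mid E, Y).
\]
The term $H(E \mid X, Y)$ vanishes because $E$ is a deterministic function of $(X, Y)$ (one just checks whether $X = f(Y)$). Next, conditioning reduces entropy, so $H(E \mid Y) \leq H(E) = H(P_e)$, since $E$ is a Bernoulli variable with parameter $P_e$. Rearranging gives
\[
H(X \mid Y) \;\leq\; H(P_e) + H(X \mid E, Y),
\]
so the remaining task is to bound $H(X \mid E, Y)$ by $P_e \log |\mathcal{X}|$.

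For this last bound I would split according to the value of $E$. On the event $\{E = 0\}$ we have $X = f(Y)$, a function of $Y$, so $H(X \mid Y, E = 0) = 0$. On $\{E = 1\}$, $X$ ranges over the set $\mathcal{X} \setminus \{f(Y)\}$, which has at most $|\mathcal{X}| - 1 \leq |\mathcal{X}|$ elements, and the conditional entropy of a random variable taking values in a finite set is at most the logarithm of the cardinality of that set, so $H(X \mid Y, E = 1) \leq \log |\mathcal{X}|$. Averaging over $E$ gives
\[
H(X \mid E, Y) \;=\; (1 - P_e)\, H(X \mid Y, E = 0) + P_e\, H(X \mid Y, E = 1) \;\leq\; P_e \log |\mathcal{X}|,
\]
and combining this with the earlier inequality yields the desired bound.

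No step here looks like a real obstacle; the only mild subtlety is making sure the chain-rule identity and the conditioning-reduces-entropy inequality are applied in the correct direction for the joint variable $(E, X)$ given $Y$. Since $\mathcal{X}$, $\mathcal{Y}$, $\mathcal{Z}$ are all finite, all the entropies involved are well defined and finite, so no measure-theoretic care is required.
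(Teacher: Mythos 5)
Your proof is correct and follows essentially the same route as the paper's: both introduce the error indicator $E$, expand $H(X,E\mid Y)$ via the chain rule in two ways, use $H(E\mid X,Y)=0$ and $H(E\mid Y)\leq H(E)=H(P_e)$, and then bound $H(X\mid E,Y)$ by splitting on the value of $E$. The only cosmetic difference is that you observe $X$ lies in $\mathcal{X}\setminus\{f(Y)\}$ on $\{E=1\}$, whereas the paper bounds $H(X\mid E=1,Y)\leq H(X)\leq\log|\mathcal{X}|$ directly; both yield the same $\log|\mathcal{X}|$ factor.
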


\begin{proof}
We briefly explain the proof given by \cite[Section 2.10]{Cover--Thomas} for the convenience of readers.
We define a random variable $E$ by 
\[ E= 0 \text{ if $X=f(Y)$}, \quad E=1 \text{ if $X\neq f(Y)$}. \]
We expand $H(X, E|Y)$ in two ways:
\begin{equation*}
  \begin{split}
  H(X,E|Y) &= H(X|Y) + H(E| X, Y) \\
                 &= H(E|Y) + H(X|E, Y).
  \end{split}
\end{equation*}  
We have $H(E|X, Y)=0$ because $E$ is determined by $X$ and $Y$. Thus 
\begin{equation*}
   \begin{split} 
     H(X|Y) &= H(E|Y) + H(X|E,Y)  \\ 
     &\leq H(E) + \mathbb{P}(E=0) H(X|E=0, Y) + \mathbb{P}(E=1) H(X|E=1, Y). 
    \end{split}
\end{equation*}     
It follows from the definition of $E$ that $H(E) = H(P_e)$ and $H(X|E=0, Y)=0$ (because $E=0$ means that 
$X$ is determined by $Y$).    
Since $X$ takes at most $|\mathcal{X}|$ values, $H(X|E=1, Y)\leq H(X)\leq \log |\mathcal{X}|$. 
Thus 
\[ H(X|Y) \leq H(P_e) + P_e\cdot  H(X|E=1, Y) \leq H(P_e) + P_e \log |\mathcal{X}|. \]
\end{proof}

The next corollary is essentially contained in \cite[Corollary A.1]{Kawabata--Dembo}.
This is the basis of the proof of Theorem \ref{thm: main theorem}.

\begin{corollary} \label{cor: uniform distribution over separated set}
Let $(\mathcal{X},d)$ be a compact metric space.
Let $\varepsilon >0$ and $D>2$.
Suppose $S\subset \mathcal{X}$ is a $(2D\varepsilon)$-separated set 
(i.e. any two distinct points in $S$ have distance $\geq 2D\varepsilon$).
Let $X$ and $Y$ be measurable maps from $\Omega$ to $\mathcal{X}$ such that $X$ is uniformly distributed over $S$
and 
\[ \mathbb{E}\left(d(X,Y)\right) < \varepsilon. \]
Then 
\[   I(X;Y) \geq \left(1-\frac{1}{D}\right) \log |S| - H(1/D). \]
\end{corollary}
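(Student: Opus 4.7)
The plan is to prove this by applying Fano's inequality to a nearest-point decoder, combined with Markov's inequality and the data-processing inequality.

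First I would define a decoder $f:\mathcal{X}\to S$ by letting $f(y)$ be a nearest point of $S$ to $y$ (ties broken arbitrarily; note $S$ is finite since $\mathcal{X}$ is compact and $S$ is $2D\varepsilon$-separated, so each Voronoi cell is a Borel set, ensuring measurability). The key geometric observation is that if $d(X,Y)<D\varepsilon$, then for any $s\in S$ with $s\neq X$ the triangle inequality gives
\[
d(s,Y)\geq d(s,X)-d(X,Y)>2D\varepsilon-D\varepsilon=D\varepsilon>d(X,Y),
\]
so $X$ is the unique nearest point of $S$ to $Y$ and hence $f(Y)=X$. Therefore
\[
P_e:=\mathbb{P}(X\neq f(Y))\leq \mathbb{P}(d(X,Y)\geq D\varepsilon)\leq \frac{\mathbb{E}(d(X,Y))}{D\varepsilon}<\frac{1}{D}
\]
by Markov's inequality.

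Next I would apply Fano's inequality (Lemma \ref{lemma: Fano's inequality}) to $X$, viewed as taking values in the finite set $S$, and the random variable $f(Y)$, which also takes values in $S$: this gives $H(X\mid f(Y))\leq H(P_e)+P_e\log|S|$. Combining this with the data-processing inequality (Lemma \ref{lemma: data-processing inequality}), which yields $I(X;Y)\geq I(X;f(Y))$, and using that $X$ is uniform on $S$ so $H(X)=\log|S|$, I obtain
\[
I(X;Y)\geq I(X;f(Y))=H(X)-H(X\mid f(Y))\geq (1-P_e)\log|S|-H(P_e).
\]

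Finally, since $D>2$ we have $P_e<1/D<1/2$, and on $[0,1/2]$ the function $H$ is increasing, so $H(P_e)\leq H(1/D)$; also $(1-P_e)\log|S|\geq(1-1/D)\log|S|$. Putting these bounds together gives the desired inequality. The only subtle point is the measurability of $f$ and the legitimacy of applying the finite-alphabet Fano inequality, but these are handled automatically by the finiteness of $S$; no genuine obstacle should arise.
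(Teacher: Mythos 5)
Your proof is correct and follows essentially the same route as the paper: both use Markov's inequality to bound the error probability of a proximity-based decoder, then invoke Fano's inequality and $H(X)=\log|S|$; the only cosmetic differences are that you use a nearest-point decoder (the paper uses a ball-based one) and that you bound $I(X;Y)\geq I(X;f(Y))$ via data-processing and then apply Fano to $(X,f(Y))$, whereas the paper first reduces $Y$ to a finite-valued random variable via Remark~\ref{remark: reduction to discrete case} and applies Fano to $(X,Y)$ directly.
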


\begin{proof}
Since $S$ is a finite set, $X$ takes only finitely many values.
We can assume that $Y$ also takes only finitely many values as in Remark \ref{remark: reduction to discrete case}.
Define $f:\mathcal{X}\to \mathcal{X}$ by 
\[ f(x) = \begin{cases}
a &  \text{if $x \in B_{D\varepsilon}(a)$ for some $a\in S$}, \\
    x & \text{otherwise},
\end{cases}
\]
with $B_{r}(x)$ denoting the open ball of radius $r$ around a point $x\in X$.
Set $P_e = \mathbb{P}(X\neq f(Y))$.
Since $\{X\neq f(Y)\}$ is contained in $\{d(X,Y) \geq D\varepsilon \}$, 
\[ P_e \leq \mathbb{P}\left(d(X,Y) \geq D\varepsilon \right) 
      \leq \frac{1}{D\varepsilon} \mathbb{E}\left(d(X,Y)\right) < \frac{1}{D} < \frac{1}{2}. \]
By Lemma \ref{lemma: Fano's inequality},
\[ H(X|Y) \leq H(P_e) + P_e \log |S|
   \leq H(1/D) + (1/D) \log |S|. \]
Since $X$ is uniformly distributed over $S$, its entropy is $\log |S|$.
Thus 
\begin{align*}
I(X;Y) &= H(X) -H(X|Y) \\
&= \log |S| - H(X|Y) \geq \left(1-\frac{1}{D}\right) \log |S| - H(1/D). 
\end{align*}
\end{proof}

\noindent
The next lemma is used in the proof of Theorem \ref{thm: second main theorem}.

\begin{lemma} \label{lemma: uniform distribution over separated set, second form}
 Let $(\mathcal{X},d)$ be a compact metric space with a finite subset~$A$.
 Let $n$ be a natural number and $\varepsilon, \alpha$ positive numbers with $\alpha \leq 1/2$.
 Suppose $S\subset A^n$ is a $2\varepsilon$-separated set with respect to the distance 
 \[ d_n\left((x_0, \dots,x_{n-1}), (y_0,\dots, y_{n-1})\right) = 
    \max_{0\leq k \leq n-1} d(x_k, y_k). \]
 Let $X=(X_0,\dots,X_{n-1})$ and $Y= (Y_0,\dots,Y_{n-1})$ be measurable maps from $\Omega$ to $\mathcal{X}^n$
 such that $X$ is uniformly distributed over $S$ and 
\begin{equation} \label{eq: modified distortion condition in preliminary}
  \mathbb{E}(\text{number of $k \in [0, n-1]$ satisfying $d(X_k,Y_k)\geq \varepsilon$}) < \alpha n. 
\end{equation}  
Then 
\[ I(X;Y) \geq \log |S| - n H(\alpha) -\alpha n \log |A|. \]  
\end{lemma}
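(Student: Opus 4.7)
The plan is a Fano-style argument refined to use a coordinate-wise error vector rather than a single error event, which lets us exploit both the $2\varepsilon$-separation of $S$ and the fact that the exceptional coordinates have small average cardinality. As a preliminary reduction, Remark~\ref{remark: reduction to discrete case} (more precisely, the data-processing inequality applied to a sufficiently fine partition of $\mathcal{X}$) lets us assume that $Y$ takes only finitely many values, so that standard finite-alphabet entropy manipulations apply.

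Next, introduce the auxiliary random vector $E = (E_0,\dots,E_{n-1}) \in \{0,1\}^n$ with $E_k = 1$ iff $d(X_k, Y_k) \geq \varepsilon$; hypothesis~(\ref{eq: modified distortion condition in preliminary}) then reads $\mathbb{E}(|E|) < \alpha n$, where $|E| := \sum_k E_k$. The key step, and the only place the $2\varepsilon$-separation of $S$ is used, is a deterministic uniqueness claim: the triple $(Y, E, (X_k)_{k : E_k = 1})$ determines $X$. Indeed, if $X, X' \in S$ are both consistent with this data, then on the error coordinates they agree by construction, while on the remaining coordinates the triangle inequality gives $d(X_k, X'_k) \leq d(X_k, Y_k) + d(Y_k, X'_k) < 2\varepsilon$; hence $d_n(X, X') < 2\varepsilon$, forcing $X = X'$. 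I expect this short uniqueness observation to be the main content of the lemma; once one thinks to record which coordinates fail the $\varepsilon$-approximation, the separation hypothesis makes the reconstruction automatic.

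Given this, the bound is pure entropy bookkeeping. Since $E$ is a function of $(X, Y)$, we have $H(X|Y) = H(E|Y) + H(X|E, Y)$. Conditionally on $(E, Y)$, the variable $X$ is determined by $(X_k)_{k : E_k = 1} \in A^{|E|}$, so the second term is at most $\mathbb{E}(|E|) \log |A| < \alpha n \log |A|$. For the first, $H(E|Y) \leq H(E) \leq \sum_k H(p_k)$ with $p_k = \mathbb{P}(E_k = 1)$; since $\bar{p} := n^{-1}\sum_k p_k < \alpha \leq 1/2$ and $H$ is concave on $[0,1]$ and monotone increasing on $[0,1/2]$, concavity gives $\sum_k H(p_k) \leq n H(\bar{p}) \leq n H(\alpha)$. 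Combining these with $H(X) = \log |S|$ and $I(X;Y) = H(X) - H(X|Y)$ yields the stated inequality.
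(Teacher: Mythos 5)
Your proof is correct and follows essentially the same Fano-style route as the paper: it introduces the error set $Z = \{k : d(X_k,Y_k)\geq\varepsilon\}$ (your $E$ is just its indicator vector), expands $H(X|Y) = H(Z|Y) + H(X|Y,Z)$, bounds $H(Z)$ by $nH(\alpha)$ via concavity of $H$, and bounds $H(X|Y,Z)$ by $\mathbb{E}|Z|\log|A|$ using the $2\varepsilon$-separation. The one cosmetic difference is that you observe that $(Y, Z, (X_k)_{k\in Z})$ determines $X$ exactly, whereas the paper only counts that given $(Y, Z=E)$ there are at most $|A|^{|E|}$ possible values of $X$; both give $H(X|Y,Z=E)\leq|E|\log|A|$, so the sharper observation yields nothing extra here.
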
   

\begin{proof}
The argument is similar to the proof of Fano's inequality.
We can assume that $Y$ takes only finitely many values as in Remark \ref{remark: reduction to discrete case}.
We define a random variable $Z$ by 
\[ Z= \{k \in [0,n-1]|\, d(X_k,Y_k)\geq \varepsilon\} \subset \{0,1,\dots,n-1\}. \]
Note that by assumption (\ref{eq: modified distortion condition in preliminary}) we have that
$\mathbb{E}|Z| <\alpha n$.

\begin{claim}  \label{claim: entropy of error set}
\[  H(Z) \leq n H(\alpha). \]
\end{claim}

\begin{proof}
We define $Z_k$ $(0\leq k \leq n-1)$ by 
\[ Z_k = 0 \text{ if $k \not\in Z$}, \quad Z_k = 1 \text{ if $k \in Z$}. \]
We have $|Z| = Z_0+\dots+Z_{n-1}$ and $H(Z) = H(Z_0,\dots, Z_{n-1}) \leq H(Z_0)+\dots+H(Z_{n-1})$.
Set $\alpha_k = \mathbb{P}(Z_k =1)$.
From the concavity of $H(p) = -p\log p - (1-p)\log (1-p)$, 
\[ H(Z) \leq \sum_{k=0}^{n-1} H(\alpha_k) \leq n H\left(\frac{1}{n}\sum_{k=0}^{n-1} \alpha_k \right)
   \leq n H(\alpha), \]
   where we used $\sum \alpha_k = E|Z| < \alpha n$ and $\alpha\leq 1/2$.
\end{proof}

Expanding $H(X,Z|Y)$ in two ways:
\begin{equation*}
   \begin{split}
   H(X,Z|Y)  &= H(X|Y) + H(Z|X,Y) \\
               &= H(Z|Y) + H(X|Y,Z).
   \end{split}
\end{equation*}   
We have $H(Z|X,Y) =0$ because $Z$ is determined by $X$ and $Y$.
Hence by Claim \ref{claim: entropy of error set}
\begin{equation}  \label{eq: bounding H(X|Y)}
   H(X|Y) = H(X|Y,Z) + H(Z|Y) \leq H(X|Y,Z) + n H(\alpha).
\end{equation}
Take a subset $E\subset \{0,1,\dots,n-1\}$.
(We write $E^c = \{0,1,\dots,n-1\}\setminus E$.)
We estimate the conditional entropy $H(X|Y,Z=E)$.
Under the condition $Z=E$, we have $\max_{k \in E^c} d(X_k, Y_k) < \varepsilon$.
Since $S$ is $2\varepsilon$-separated with respect to $d_n$, 
for each $a\in \mathcal{X}^n$ the number of $x\in S$ satisfying 
\[ \max_{k \in E^c} d(x_k, a_k) < \varepsilon \]
is at most $|A|^{|E|}$.
Therefore the number of possible outcomes of $X$ (given $Y$ and $Z=E$) is at most $|A|^{|E|}$.
Thus 
\[  H\left(X| Y, Z=E \right) \leq |E|\log |A|. \]
It follows that
\begin{equation*}
    \begin{split}
     H(X|Y,Z) &= \sum_{E} \mathbb{P}(Z=E) H(X|Y,Z=E) \\
                &\leq \log |A| \sum_{E} |E|\cdot  \mathbb{P}(Z=E) \\
                &= \log |A| \cdot \mathbb{E}|Z| \\
                &\leq \alpha n \log |A| \quad (\text{by the assumption $\mathbb{E}|Z| < \alpha n$}).    
    \end{split}
\end{equation*}    
Combining (\ref{eq: bounding H(X|Y)})
\begin{equation*}
    I(X;Y) = H(X) - H(X|Y)  \geq \log |S| - n H(\alpha) - \alpha n \log |A|.
\end{equation*}    
Here we used $H(X)= \log |S|$ since $X$ is uniformly distributed over~$S$.
\end{proof}

In the rest of this section we assume for simplicity that $\mathcal{X}, \mathcal{Y}, \mathcal{Z}$ are finites sets.

\begin{lemma}[Subadditivity of mutual information]  \label{lemma: subadditivity of mutual information}
Let $X, Y, Z$ be measurable maps from $\Omega$ to $\mathcal{X}, \mathcal{Y},\mathcal{Z}$ respectively.
Suppose $X$ and $Z$ are conditionally independent given $Y$, namely for every $y\in \mathcal{Y}$ with 
$\mathbb{P}(Y=y)\neq 0$ we have 
\begin{equation} \label{eq: conditional independence}
  \mathbb{P}(X=x,Z=z|Y=y) = \mathbb{P}(X=x|Y=y)\mathbb{P}(Z=z|Y=y)  
\end{equation}  
for every $x\in \mathcal{X}$ and $z\in \mathcal{Z}$. 
Then 
\[ I(Y; X, Z) \leq I(Y;X) + I(Y;Z). \]
\end{lemma}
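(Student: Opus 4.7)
The plan is to reduce the inequality to the standard additivity and subadditivity identities for Shannon entropy in the finite-alphabet setting, using the relations recorded in (\ref{eq: mutual information of discrete random variables}), namely $I(A;B) = H(A) + H(B) - H(A,B) = H(A) - H(A|B)$, together with the chain rule $H(A,B|C) = H(A|C) + H(B|A,C)$.

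First I would expand the left-hand side as
\[ I(Y; X, Z) = H(X,Z) - H(X,Z|Y). \]
Next I would translate the conditional independence hypothesis (\ref{eq: conditional independence}) into the identity
\[ H(X,Z|Y) = H(X|Y) + H(Z|Y). \]
This step is a direct computation: for each $y \in \mathcal{Y}$ with $\mathbb{P}(Y=y)>0$, the factorization of the conditional joint law of $(X,Z)$ given $Y=y$ yields $H(X,Z|Y=y) = H(X|Y=y) + H(Z|Y=y)$, and averaging over $y$ with weights $\mathbb{P}(Y=y)$ gives the displayed equality.

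Finally I would combine this with the unconditional subadditivity of entropy $H(X,Z) \leq H(X) + H(Z)$, which is nothing but the nonnegativity of $I(X;Z)$. Putting the pieces together,
\[ I(Y;X,Z) = H(X,Z) - H(X,Z|Y) \leq H(X) + H(Z) - H(X|Y) - H(Z|Y) = I(Y;X) + I(Y;Z), \]
where the last equality uses $I(Y;X) = H(X) - H(X|Y)$ and $I(Y;Z) = H(Z) - H(Z|Y)$.

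There is no substantial obstacle here: once the conditional independence hypothesis is rephrased as the additivity of conditional entropy, the argument is a short chain of standard manipulations. The only minor point worth remarking on is that values $y$ with $\mathbb{P}(Y=y)=0$ contribute nothing to $H(X,Z|Y)$ under the convention $0\log(0/a)=0$ from (\ref{eq: definition of mutual information}), so the factorization (\ref{eq: conditional independence}) being required only on the support of $Y$ causes no difficulty.
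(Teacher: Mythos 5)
Your proposal is correct and follows essentially the same route as the paper's own proof: both rephrase the conditional independence hypothesis as $H(X,Z|Y)=H(X|Y)+H(Z|Y)$ and then apply the unconditional subadditivity $H(X,Z)\leq H(X)+H(Z)$ to conclude. Nothing further is needed.
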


\begin{proof}
From the conditional independence,
\begin{equation} \label{eq: equality of conditional entropies}
  H(X,Z|Y) = H(X|Y) + H(Z|Y). 
\end{equation}  
Indeed $H(X,Z|Y)$ is equal to 
\[ - \sum_y \mathbb{P}(Y=y)\left( \sum_{x,z} \mathbb{P}(X=x, Z=z|Y=y)\log  \mathbb{P}(X=x, Z=z|Y=y) \right).   \]
By using (\ref{eq: conditional independence}) we can easily check (\ref{eq: equality of conditional entropies}).
Then 
\begin{equation*}
   \begin{split}
    I(Y; X,Z)  &= H(X,Z) - H(X,Z|Y) \\
	&= H(X,Z) - H(X|Y) -H(Z|Y)  
	\\
                &\leq H(X) + H(Z) - H(X|Y) -H(Z|Y)\\
&				= I(X;Y) + I(Z;Y). 
   \end{split}
\end{equation*}
In the passage from the second line to the third, we used $H(X,Z) \leq H(X)+H(Z)$.
\end{proof}

On the other hand, we have:
\begin{lemma}[Superadditivity of mutual information]  \label{lemma:superadditivity of mutual information}
Let $X, Y, Z$ be measurable maps from $\Omega$ to $\mathcal{X}, \mathcal{Y},\mathcal{Z}$ respectively.
Suppose $X$ and $Z$ are independent.
Then
\[ I(Y; X, Z) \geq I(Y;X) + I(Y;Z). \]
\end{lemma}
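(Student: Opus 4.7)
The plan is to mirror the proof of Lemma~\ref{lemma: subadditivity of mutual information}, swapping the roles of the equality and the inequality: this time, the hypothesis of independence of $X$ and $Z$ upgrades the subadditivity of joint entropy $H(X,Z)\leq H(X)+H(Z)$ to an equality, while the subadditivity of conditional entropy $H(X,Z\mid Y)\leq H(X\mid Y)+H(Z\mid Y)$ (which holds in general, without any independence assumption) does the work in the other direction.

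First I would write $I(Y;X,Z)=H(X,Z)-H(X,Z\mid Y)$. By the hypothesis that $X$ and $Z$ are independent, $H(X,Z)=H(X)+H(Z)$. The remaining task is to show
\[ H(X,Z\mid Y)\leq H(X\mid Y)+H(Z\mid Y). \]
This is standard but, for self-containedness, I would derive it by applying the unconditioned subadditivity $H(U,V)\leq H(U)+H(V)$ conditionally: write
\[ H(X,Z\mid Y)=\sum_{y}\mathbb{P}(Y=y)\,H(X,Z\mid Y=y), \]
apply the joint-entropy subadditivity inside each fibre $\{Y=y\}$, and sum.

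Combining these two ingredients gives
\[ I(Y;X,Z)=H(X)+H(Z)-H(X,Z\mid Y)\geq H(X)+H(Z)-H(X\mid Y)-H(Z\mid Y), \]
and the right-hand side is exactly $I(Y;X)+I(Y;Z)$, which is what was to be proved. There is really no serious obstacle here: the independence hypothesis is used in precisely one place (to upgrade $\leq$ to $=$ for $H(X,Z)$), and everything else is the standard subadditivity of (conditional) entropy recalled in Section~\ref{section: mutual information}. The only minor care needed is making sure the fibrewise decomposition of conditional entropy is written cleanly, since the paper works with finite $\mathcal{X},\mathcal{Y},\mathcal{Z}$, where all of the sums involved are manifestly finite.
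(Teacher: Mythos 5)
Your proof is correct and is essentially identical to the paper's: both use independence to write $H(X,Z)=H(X)+H(Z)$ and then apply the subadditivity of conditional entropy $H(X,Z\mid Y)\leq H(X\mid Y)+H(Z\mid Y)$ to conclude. Your additional fibrewise justification of that conditional subadditivity is fine but not a different method.
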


\begin{proof}
Since $X, Z$ are independent, $H (X,Z)= H (X)+ H (Z)$ hence
\begin{align*}
I(Y;X,Z)& =H(X,Z)-H(X,Z|Y) = H (X) + H (Z) - H (X, Z| Y) \\
& \geq H (X) + H (Z) - H (X| Y) - H (Z| Y) \\
& = I(Y;X)+I(Y;Z)
.\end{align*}
\end{proof}

Let $X:\Omega\to \mathcal{X}$ and $Y:\Omega\to \mathcal{Y}$ be measurable maps.
We define a probability mass function $\mu(x)$ and a conditional probability mass function 
$\nu(y|x)$ by 
\[ \mu(x) = \mathbb{P}(X=x), \quad \nu(y|x) = \mathbb{P}(Y=y|X=x). \]
Notice that $\nu(y|x)$ is defined only for $x\in \mathcal{X}$ with 
$\mathbb{P}(X=x) \neq 0$.
The distribution of $(X,Y)$ is given by $\mu(x)\nu(y|x)$ and it determines the mutual information $I(X;Y)$, hence we sometimes write $I(X;Y) = I(\mu,\nu)$.

\begin{lemma}[Concavity/convexity of mutual information]  \label{lemma: concavity/convexity of mutual information}
$I(\mu, \nu)$ is a concave function of $\mu(x)$ for fixed $\nu(y|x)$ and a convex function of $\nu(y|x)$ for fixed $\mu(x)$.
More precisely,

\begin{enumerate}
\item Suppose that for each $x\in \mathcal{X}$ we are given a probability mass function $\nu(\cdot|x)$ on $\mathcal{Y}$.
Let $\mu_1$ and $\mu_2$ be two probability mass functions on $\mathcal{X}$. Then 
\[ I((1-t)\mu_1+t\mu_2, \nu) \geq (1-t)I(\mu_1,\nu) + t I(\mu_2,\nu) \quad (0\leq t\leq 1).\]
Here the left-hand side is the mutual information of the joint distribution $(1-t)\mu_1(x)\nu(y|x) + t\mu_2(x)\nu(y|x)$.
\item Suppose that for each $x\in \mathcal{X}$ we are given two probability mass functions 
$\nu_1(\cdot|x)$ and $\nu_2(\cdot|x)$ on $\mathcal{Y}$. 
Let $\mu$ be a probability mass function on $\mathcal{X}$. Then 
\[  I(\mu, (1-t)\nu_1+t\nu_2) \leq (1-t) I(\mu, \nu_1) + t I(\mu, \nu_2) \quad (0\leq t\leq 1).\]
Here the left-hand side is the mutual information of the joint distribution 
$(1-t) \mu(x)\nu_1(y|x) + t\mu(x)\nu_2(y|x)$.
\end{enumerate}
\end{lemma}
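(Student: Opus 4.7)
The plan is to base both parts on the identity $I(X;Y) = H(Y) - H(Y|X)$, together with the log-sum inequality for the convexity half.

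For part (1), I fix the conditional distributions $\nu(y|x)$ and regard $\mu$ as a variable. The marginal of $Y$ under the joint law $\mu(x)\nu(y|x)$ is $p(y) = \sum_x \mu(x)\nu(y|x)$, which is \emph{linear} in $\mu$. The conditional entropy is
\[ H(Y|X) = \sum_x \mu(x)\, H\!\bigl(\nu(\cdot|x)\bigr), \]
which is also linear in $\mu$. The entropy $H(Y) = -\sum_y p(y)\log p(y)$ is a concave function of the distribution $p(\cdot)$ (this is the standard concavity of Shannon entropy, which follows from concavity of $t\mapsto -t\log t$). Composition of a concave function with a linear map is concave, so $\mu\mapsto H(Y)$ is concave. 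Hence $I(\mu,\nu) = H(Y)-H(Y|X)$ is a concave function of $\mu$ (concave minus linear), which gives (1).

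For part (2), I use the representation
\[ I(\mu,\nu) = \sum_{x,y} \mu(x)\nu(y|x)\log\frac{\nu(y|x)}{p_\nu(y)}, \qquad p_\nu(y) = \sum_{x'} \mu(x')\nu(y|x'). \]
Fix $\mu$ and let $\nu = (1-t)\nu_1 + t\nu_2$; then $p_\nu = (1-t)p_{\nu_1} + t p_{\nu_2}$. The key tool is the log-sum inequality: for nonnegative $a_1,a_2,b_1,b_2$,
\[ (a_1+a_2)\log\frac{a_1+a_2}{b_1+b_2} \leq a_1\log\frac{a_1}{b_1} + a_2\log\frac{a_2}{b_2}. \]
Applying this pointwise with $a_i = $ (respectively) $(1-t)\nu_1(y|x)$, $t\nu_2(y|x)$ and $b_i = (1-t)p_{\nu_1}(y)$, $tp_{\nu_2}(y)$ (the factors of $(1-t)$ and $t$ cancel inside the log-ratios), then multiplying by $\mu(x)$ and summing over $x,y$, yields
\[ I(\mu,(1-t)\nu_1+t\nu_2) \leq (1-t)I(\mu,\nu_1) + t\, I(\mu,\nu_2), \]
which is (2).

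The main thing to get right is part (2); part (1) is essentially immediate once one separates out $H(Y)$ and $H(Y|X)$. For (2), one must resist the temptation to use the $H(Y)-H(Y|X)$ decomposition directly (since $H(Y|X)$ is only concave, not convex, in $\nu(\cdot|x)$ for fixed $\mu$), and instead invoke the log-sum inequality (equivalently, joint convexity of the Kullback--Leibler divergence applied to the pair $(\mu(x)\nu(y|x),\, \mu(x)p_\nu(y))$). No further obstacles are anticipated.
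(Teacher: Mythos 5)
Your proof is correct and follows essentially the same route as the paper: part (1) via the decomposition $I = H(Y) - H(Y|X)$ with $H(Y|X)$ linear and $H(Y)$ concave in $\mu$, and part (2) via the log-sum inequality applied summand by summand (the paper simply keeps the factor $\mu(x)$ inside the log-sum arguments, which is equivalent to your pulling it out and multiplying afterwards).
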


\begin{proof}
See \cite[Theorem 2.7.4]{Cover--Thomas} for the detailed proof.
Here we sketch the outline.
First we explain (1).
\[ I(\mu,\nu) = I(X;Y) = H(Y) -H(Y|X). \]
If $\nu(y|x)$ is fixed, $H(Y)$ is a concave function of $\mu(x)$ and $H(Y|X)$ is a linear function of $\mu(x)$.
The difference $I(\mu,\nu)$ is a concave function of $\mu(x)$.

Next we explain (2).
The function $\phi(t) = t\log t$ is convex. So 
\[ \phi\left(\frac{a+a'}{b+b'}\right) \leq \frac{b}{b+b'}\phi\left(\frac{a}{b}\right) + \frac{b'}{b+b'}\phi\left(\frac{a'}{b'}\right) \] 
for positive numbers $a,a',b,b'$. This leads to
\begin{equation}  \label{eq: log sum inequality}
   (a+a') \log \frac{a+a'}{b+b'} \leq a\log \frac{a}{b}  + a' \log \frac{a'}{b'}. 
\end{equation} 
Set $\sigma_i (y) = \sum_{x\in \mathcal{X}}\mu(x)\nu_i(y|x)$ for $i=1,2$.
Then $I(\mu, (1-t)\nu_1+ t\nu_2)$ is given by 
\[ \sum_{x,y} \left\{(1-t)\mu(x)\nu_1(y|x) + t\mu(x) \nu_2(y|x)\right\}
   \log \frac{(1-t)\mu(x)\nu_1(y|x) + t\mu(x) \nu_2(y|x)}{(1-t)\mu(x)\sigma_1(y) + t\mu(x)\sigma_2(y)}. \]
Applying the inequality (\ref{eq: log sum inequality}) to each summand, $I(\mu, (1-t)\nu_1+t\nu_2)$ is bounded by 
\[ \sum_{x,y} (1-t)\mu(x)\nu_1(y|x) \log \frac{\mu(x)\nu_1(y|x)}{\mu(x)\sigma_1(y)}
   + \sum_{x,y} t\mu(x)\nu_2(y|x) \log\frac{\mu(x)\nu_2(y|x)}{\mu(x)\sigma_2(y)}.   \]
  This is equal to $(1-t) I(\mu,\nu_1) + t I(\mu,\nu_2)$.
\end{proof}

\begin{example}[Continuation of Example \ref{ex: Hilbert cube}]  \label{ex: continuation of Hilbert cube}
Here we sketch the proof of the estimate (\ref{eq: rate distortion function of Hilbert cube}) in Example \ref{ex: Hilbert cube}.
Note that this is not used for the proofs of Theorems \ref{thm: main theorem} and \ref{thm: second main theorem}.
We use the notations in Example \ref{ex: Hilbert cube}.
It is easy to prove 
\[  \limsup_{\varepsilon\to 0}\frac{R_\mu(\varepsilon)}{|\log\varepsilon|} \leq 1. \]
See Lemma \ref{lemma: metric mean dimension dominates rate distortion function} below for the details.
The main issue is a lower bound on $R_\mu(\varepsilon)$.
Let $X$ and $Y= (Y_0,\dots,Y_{n-1})$ be random variables defined on some probability space such that $X$ has distribution $\mu$ and 
$Y_k$ take values in $[0,1]^\mathbb{Z}$ satisfying the distortion condition (\ref{eq: distortion condition}).
We write $X= (X_m)_{m\in \mathbb{Z}}$ and $Y_k = (Y_{k,m})_{m\in \mathbb{Z}}$.
\begin{equation}  \label{eq: lower bound on mutual information in the case of Hilbert cube}
   \begin{split}
    I(X;Y) &\geq I\left( (X_0,\dots, X_{n-1}); (Y_{0,0}, Y_{1,0},\dots, Y_{n-1,0})\right)  \\
   &  \hspace{1cm}  (\text{by data-processing inequality; see Lemma \ref{lemma: data-processing inequality}}) \\
   & \geq  \sum_{m=0}^{n-1} I\left(X_m;  (Y_{0,0}, Y_{1,0},\dots, Y_{n-1,0})\right) \\
   &  \hspace{1cm}   (\text{since $X_0,\dots, X_{n-1}$ are independent; see Lemma~\ref{lemma:superadditivity of mutual information}})  \\
   & \geq \sum_{m=0}^{n-1} I(X_m; Y_{m,0}) \quad 
      (\text{by data-processing inequality}).
    \end{split}
\end{equation}    
It follows from the distortion condition (\ref{eq: distortion condition}) that 
\begin{equation}  \label{eq: distortion in the case of Hilbert cube}
   \frac{1}{n}\sum_{m=0}^{n-1}\mathbb{E}|X_m-Y_{m,0}| \leq 
    \frac{1}{n} \mathbb{E}\left(\sum_{m=0}^{n-1} d(T^m X, Y_m) \right) < \varepsilon.   
\end{equation}    
We denote by $r(\varepsilon)$ the infimum of the mutual information $I(U;V)$ 
such that $U$ and $V$ are random variables (defined on some probability space) taking values in $[0,1]$
satisfying 
\begin{itemize}
   \item $U$ obeys the Lebesgue measure.
   \item $V$ satisfies $\mathbb{E}|U-V| \leq \varepsilon$.
\end{itemize}                    
The convexity/concavity properties of mutual information, specifically 
Lemma~\ref{lemma: concavity/convexity of mutual information}.(2), imply that $r(\varepsilon)$ is a convex function in $\varepsilon$
(c.f.~\cite[Lemma 10.4.1]{Cover--Thomas}.)
Thus it follows from (\ref{eq: lower bound on mutual information in the case of Hilbert cube}) and 
(\ref{eq: distortion in the case of Hilbert cube}) that 
\[ \frac{I(X;Y)}{n} \geq \frac{1}{n} \sum_{m=0}^{n-1} r\left(\mathbb{E}|X_m-Y_{m,0}|\right) 
    \geq r\left(\frac{1}{n} \sum_{m=0}^{n-1} \mathbb{E}|X_m-Y_{m,0}|\right)  \geq r(\varepsilon), \]
and hence $R_\mu(\varepsilon) \geq r(\varepsilon)$.
Then $R_\mu(\varepsilon)\sim |\log \varepsilon|$ follows from the next claim.

\begin{claim}  \label{claim: rate distortion function of iid on [0,1]}
\[  r(\varepsilon)\sim |\log \varepsilon| \quad (\varepsilon\to 0). \]
\end{claim}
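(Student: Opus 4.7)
The plan is to establish the two matching asymptotic bounds
\[
\limsup_{\varepsilon\to 0}\frac{r(\varepsilon)}{|\log\varepsilon|}\leq 1 \quad \text{and} \quad \liminf_{\varepsilon\to 0}\frac{r(\varepsilon)}{|\log\varepsilon|}\geq 1,
\]
which together give $r(\varepsilon)\sim|\log\varepsilon|$ as $\varepsilon\to 0$.

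For the upper bound, I would use an explicit quantization: partition $[0,1]$ into $N=\lceil 1/(4\varepsilon)\rceil$ intervals of equal length $1/N$, and let $V=V(U)$ be the midpoint of the interval containing $U$. Then $V$ is a deterministic function of $U$, so $I(U;V)=H(V)=\log N=|\log\varepsilon|+O(1)$, while $\mathbb{E}|U-V|=1/(4N)\leq\varepsilon$. This shows $r(\varepsilon)\leq|\log\varepsilon|+O(1)$.

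For the lower bound, I would argue via differential entropy. We may assume $I(U;V)<\infty$ (otherwise there is nothing to prove), so that the standard identity $I(U;V)=h(U)-h(U|V)$ applies; since $U$ is uniform on $[0,1]$ we have $h(U)=0$. Setting $W=U-V$, translation invariance of differential entropy conditional on $V$ gives $h(U|V)=h(W|V)$, and conditioning reduces differential entropy, so $h(W|V)\leq h(W)$. The constraint $\mathbb{E}|W|\leq\varepsilon$ together with the classical maximum-entropy principle --- the Laplace density $(2\lambda)^{-1}e^{-|w|/\lambda}$ on $\mathbb{R}$ maximizes differential entropy among densities with first absolute moment $\lambda$, with maximum value $\log(2e\lambda)$ --- yields $h(W)\leq\log(2e\varepsilon)$. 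Hence $I(U;V)\geq-\log(2e\varepsilon)=|\log\varepsilon|-\log(2e)$, which is the desired lower bound on $r(\varepsilon)$.

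The main technical subtlety is justifying the identity $I(U;V)=h(U)-h(U|V)$ when $V$ is a completely general random variable. This is standard whenever $U$ has a density and $I(U;V)<\infty$ (see, for example, Cover--Thomas \cite[Chapter 8]{Cover--Thomas}); in that regime the conditional distributions $P_{U|V=v}$ admit densities for $P_V$-a.e.\ $v$, and one defines $h(U|V)=\int h(P_{U|V=v})\,dP_V(v)$. The chain $h(U|V)=h(W|V)\leq h(W)\leq\log(2e\varepsilon)$ then goes through in this measure-theoretic setup, and combining the two bounds yields $r(\varepsilon)/|\log\varepsilon|\to 1$.
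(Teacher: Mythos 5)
Your proposal is correct, but your lower bound argument takes a genuinely different route from the paper. The paper works entirely with discrete Shannon entropy: it fixes $D>1$, partitions $[0,1]$ into cells of length $D\varepsilon$, and then applies the same Fano-style reasoning used throughout the paper --- the data-processing inequality $I(U;V)\geq I(\mathcal{P}(U);V)=H(\mathcal{P}(U))-H(\mathcal{P}(U)|V)$, Markov's inequality to bound $\mathbb{P}(|U-V|\geq D\varepsilon)\leq 1/D$, and the observation that on the complementary event knowing $V$ pins down $\mathcal{P}(U)$ to at most three cells. This yields $\liminf r(\varepsilon)/|\log\varepsilon|\geq 1-1/D$, and one then lets $D\to\infty$. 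Your argument instead passes to differential entropy, using $I(U;V)=h(U)-h(U|V)$ with $h(U)=0$, translation invariance to get $h(U|V)=h(W|V)$ for $W=U-V$, the ``conditioning reduces differential entropy'' inequality, and the maximum-entropy characterization of the Laplace density under a first-absolute-moment constraint. Your route is shorter and gives the clean explicit bound $r(\varepsilon)\geq|\log\varepsilon|-\log(2e)$ without any auxiliary parameter, but it imports the differential-entropy toolkit and requires the measure-theoretic care you flagged (validity of $I(U;V)=h(U)-h(U|V)$ for general $V$, existence of conditional densities, and the fact that $W$ inherits a density so that conditioning reduces $h$ applies); the paper's route is more elementary and fits better with the discrete mutual-information machinery that the rest of the paper is built on, at the cost of an extra limiting step. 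Both are valid proofs, and the explicit quantization you give for the upper bound is the standard one the paper alludes to.
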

\begin{proof}
It is again easy to prove $\limsup_{\varepsilon\to 0} r(\varepsilon)/|\log \varepsilon| \leq 1$.
So we prove a lower bound on $r(\varepsilon)$.
Let $U$ and $V$ be random variables in the above definition of $r(\varepsilon)$.
Fix $D>1$ and set $l=\lfloor 1/(D\varepsilon)\rfloor$.
We define a partition $\mathcal{P}$ of $[0,1]$ by 
\[ \mathcal{P} = \left\{[0,D\varepsilon), [D\varepsilon, 2D\varepsilon), [2D\varepsilon, 3D\varepsilon), \dots, 
                         [lD\varepsilon, 1] \right\}. \]
For $u\in[0,1]$ we denote by $\mathcal{P}(u)$ the atom of $\mathcal{P}$ containing $u$.
It follows from $\mathbb{E}|U-V| \leq \varepsilon$ that 
\[ \mathbb{P}\left(|U-V|\geq D\varepsilon\right) \leq \frac{\mathbb{E}|U-V|}{D\varepsilon}  \leq \frac{1}{D}. \]
By the data-processing inequality 
\[ I(U;V) \geq I\left(\mathcal{P}(U);V\right) = H\left(\mathcal{P}(U)\right) - H\left(\mathcal{P}(U)|V\right). \]
Under the condition $|U-V| < D\varepsilon$, if we know $V$ then the number of possibilities of $\mathcal{P}(U)$ is at most three.
This implies 
\[ H(\mathcal{P}(U)|V) \leq \log 3 +  \mathbb{P}\left(|U-V|\geq D\varepsilon\right) \log (l+1) \leq 
    \log 3 + \frac{\log(l+1)}{D}. \]
Since $U$ obeys the Lebesgue measure, $H\left(\mathcal{P}(U)\right)$ is bounded from below by 
\[  l (D\varepsilon) \log (1/D\varepsilon)  \geq    (1-D\varepsilon) \log (1/D\varepsilon). \]
Thus 
\[ r(\varepsilon)  \geq       (1-D\varepsilon) \log (1/D\varepsilon) - \frac{\log \left(1+\lfloor 1/(D\varepsilon)\rfloor\right)}{D} - \log 3. \]
It follows 
\[  \liminf_{\varepsilon\to 0}\frac{r(\varepsilon)}{|\log\varepsilon|} \geq 1-\frac{1}{D}. \]
Letting $D\to \infty$ we get $\liminf_{\varepsilon\to 0} r(\varepsilon)/|\log \varepsilon| \geq 1$.
\end{proof}
\end{example}

\section{Proof of Theorem \ref{thm: main theorem}} \label{section: proof of main theorem}

In this section we prove Theorem \ref{thm: main theorem}.
Throughout this section $(\mathcal{X},T)$ is a dynamical system, and $d$ a metric on $\mathcal{X}$.
Recall that for $n \geq 1$ we defined the distance $d_n$ on $\mathcal{X}$ by 
\[ d_n(x,y) = \max_{0\leq k<n} d(T^k x, T^k y). \]
We define another distance $\bar{d}_n$ on $\mathcal{X}$ by 
\[ \bar{d}_n(x,y) = \frac{1}{n} \sum_{k=0}^{n-1} d(T^k x, T^k y). \]
Obviously $\bar{d}_n(x,y)\leq d_n(x,y)$.
For $\varepsilon>0$ we set 
\[ \tilde{S}(\mathcal{X},T,d, \varepsilon) = \lim_{n\to \infty} \frac{1}{n} \log \#(\mathcal{X},\bar{d}_n,\varepsilon). \]
This limit exists because $\log \#(\mathcal{X},\bar{d}_n,\varepsilon)$ is a subaddtive function of $n$.
We have 
\begin{equation}  \label{eq: S dominates S bar}
    \tilde{S}(\mathcal{X},T,d,\varepsilon) \leq S(\mathcal{X},T,d, \varepsilon) = 
     \lim_{n\to \infty} \frac{1}{n} \log \#(\mathcal{X},d_n,\varepsilon). 
\end{equation}

\subsection{Metric mean dimension dominates rate distortion functions}  
\label{subsection: metric mean dimension dominates rate distortion functions}

\begin{lemma} \label{lemma: metric mean dimension dominates rate distortion function}
For $\varepsilon>0$ and every invariant probability measure $\mu$ on $\mathcal{X}$ we have 
\[  R_\mu(\varepsilon) \leq \tilde{S}(\mathcal{X},T,d,\varepsilon) \leq S(\mathcal{X},T,d,\varepsilon). \]
\end{lemma}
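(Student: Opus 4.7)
The second inequality $\tilde S(\mathcal{X},T,d,\varepsilon)\leq S(\mathcal{X},T,d,\varepsilon)$ is exactly \eqref{eq: S dominates S bar}, since $\bar d_n\leq d_n$ implies $\#(\mathcal{X},\bar d_n,\varepsilon)\leq \#(\mathcal{X},d_n,\varepsilon)$. So the entire content of the lemma is the first inequality $R_\mu(\varepsilon)\leq \tilde S(\mathcal{X},T,d,\varepsilon)$, and I would prove this by a direct ``encode by the covering'' construction.

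The plan is as follows. Fix $n\geq 1$ and let $N=\#(\mathcal{X},\bar d_n,\varepsilon)$, so that there is an open cover $\{U_1,\dots,U_N\}$ of $\mathcal{X}$ with $\diam(U_i,\bar d_n)<\varepsilon$ for every $i$. Refine it to a Borel partition $P_i=U_i\setminus\bigcup_{j<i}U_j$, which still satisfies $\diam(P_i,\bar d_n)<\varepsilon$, and for each non-empty $P_i$ pick a point $y^{(i)}\in P_i$. On a probability space $(\Omega,\mathbb{P})$ take $X$ distributed according to $\mu$, and define $Y=(Y_0,\dots,Y_{n-1})\in\mathcal{X}^n$ by
\[
   Y_k(\omega)=T^k y^{(i)} \quad\text{whenever } X(\omega)\in P_i.
\]
This $Y$ is a measurable function of $X$, and takes at most $N$ distinct values.

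The distortion estimate is immediate: for $X(\omega)\in P_i$ one has $y^{(i)}\in P_i$ too, hence
\[
   \frac{1}{n}\sum_{k=0}^{n-1} d(T^k X(\omega),Y_k(\omega))
     =\bar d_n(X(\omega),y^{(i)})<\varepsilon,
\]
so integrating in $\omega$ gives the required \eqref{eq: distortion condition}. For the mutual information, since $Y$ is supported on a set of cardinality at most $N$,
\[
   I(X;Y)\leq H(Y)\leq \log N=\log\#(\mathcal{X},\bar d_n,\varepsilon).
\]
Therefore $R_\mu(\varepsilon)\leq \frac{1}{n}\log\#(\mathcal{X},\bar d_n,\varepsilon)$ for every $n$. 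Taking $n\to\infty$ (and using that $\log\#(\mathcal{X},\bar d_n,\varepsilon)$ is subadditive in $n$, so that the limit equals the infimum) yields $R_\mu(\varepsilon)\leq \tilde S(\mathcal{X},T,d,\varepsilon)$.

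There is no real obstacle: the argument is a covering-based quantization, very much in the spirit of the classical upper bound $h_\mu(T)\leq h_{\mathrm{top}}(T)$ but using the averaged Bowen metric $\bar d_n$ rather than $d_n$. The only point requiring a little care is choosing a measurable partition subordinate to the cover so that the random variable $Y$ is well-defined; this is handled by the standard $P_i=U_i\setminus\bigcup_{j<i}U_j$ trick.
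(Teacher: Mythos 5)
Your proposal is correct and follows essentially the same route as the paper: the paper likewise takes an open cover of $\bar d_n$-diameter less than $\varepsilon$, defines $f(x)=p_k$ for the smallest $k$ with $x\in U_k$ (the same measurable-refinement device as your $P_i=U_i\setminus\bigcup_{j<i}U_j$), sets $Y=(f(X),Tf(X),\dots,T^{n-1}f(X))$, and bounds $I(X;Y)\leq H(Y)\leq\log K$ before passing to the limit in $n$. No discrepancies to report.
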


\begin{proof}
Let $n>0$,
and let $\{U_1,\dots, U_K\}$ be an open covering of $\mathcal{X}$ such that every $U_k$ has diameter smaller than $\varepsilon$
with respect to the distance $\bar{d}_n$.
We choose a point $p_k\in U_k$ for each $k$.
We define a map $f:\mathcal{X}\to \{p_1,\dots,p_K\}$ by setting 
$f(x)=p_k$ where $k$ is the smallest number satisfying $x\in U_k$.
Obviously $\bar{d}_n(x, f(x)) <\varepsilon$.
Let $X$ be a random variable obeying $\mu$.
We set $Y = (f(X), Tf(X),\dots, T^{n-1}f(X))$.
This satisfies the distortion condition (\ref{eq: distortion condition}):
\[  \mathbb{E}\left(\frac{1}{n}\sum_{k =0}^{n-1}d(T^k X, T^k f(X))\right) 
    = \mathbb{E} \bar{d}_n(X, f(X)) < \varepsilon. \]
The mutual information $I(X;Y)$ is bounded by 
\[ I(X;Y) \leq H(Y) \leq \log K, \] 
where the second inequality holds because $Y$ takes at most $K$ values.
This shows $R_\mu(\varepsilon) \leq \tilde{S}(\mathcal{X},T,d,\varepsilon)$.
\end{proof}

Lemma \ref{lemma: metric mean dimension dominates rate distortion function} immediately implies 
one direction of Theorem \ref{thm: main theorem}:
\begin{equation}  \label{eq: easier part of variational principle}
  \overline{\mdim}_\mathrm{M}(\mathcal{X},T,d) \geq 
     \limsup_{\varepsilon\to 0} \frac{\sup_{\mu\in \mathscr{M}^T(\mathcal{X})}R_\mu(\varepsilon)}{|\log\varepsilon|}. 
\end{equation}     
The case of $\underline{\mdim}_\mathrm{M}(\mathcal{X},T,d)$ is the same.
Notice that we have not used Condition \ref{condition: distance} so far.

\subsection{Condition \ref{condition: distance} implies that $d_n$ and $\bar{d}_n$ look the same}
\label{subsection: implication of Condition 1.1}

This subsection is the only place where Condition \ref{condition: distance} plays a role.
We set $[n] = \{0,1,2,\dots,n-1\}$.
For a finite subset $A\subset \mathbb{Z}$ we define $d_A(x,y) = \max_{a\in A} d(T^a x, T^a y)$ for $x,y\in \mathcal{X}$.
In particular $d_n = d_{[n]}$.

\begin{lemma} \label{lemma: comparison between d_N and d^bar_N}
For any natural number $n$ and any real numbers $\varepsilon>0$ and $L>1$ we have 
\[ \frac{1}{n}\log\#(\mathcal{X},d_n,2L\varepsilon) \leq \log 2 + \frac{1}{L}\log\#(\mathcal{X},d,\varepsilon) 
    + \frac{1}{n} \log\#(\mathcal{X}, \bar{d}_n,\varepsilon). \]
\end{lemma}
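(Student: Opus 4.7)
The plan is to take an efficient $\bar{d}_n$-cover of $\mathcal{X}$ at scale $\varepsilon$, and refine each of its pieces into $d_n$-small pieces by recording, for each orbit index at which a given point strays far from a chosen reference, which element of a $d$-cover of $\mathcal{X}$ at scale $\varepsilon$ the orbit point falls into. Markov's inequality will control how many orbit indices require this extra labelling.

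Concretely, I would set $N=\#(\mathcal{X},d,\varepsilon)$ and $M=\#(\mathcal{X},\bar{d}_n,\varepsilon)$, fix an open cover $\{W_1,\dots,W_N\}$ of $\mathcal{X}$ by sets of $d$-diameter less than $\varepsilon$ and an open cover $\{V_1,\dots,V_M\}$ of $\mathcal{X}$ by sets of $\bar{d}_n$-diameter less than $\varepsilon$, and choose a reference point $p_i\in V_i$ for each $i$. For $y\in V_i$ set
\[ E(y)=\{k\in[n]:d(T^k p_i,T^k y)\geq L\varepsilon\}. \]
Since $\bar{d}_n(p_i,y)<\varepsilon$, Markov's inequality gives $L\varepsilon\cdot|E(y)|\leq n\bar{d}_n(p_i,y)<n\varepsilon$, hence $|E(y)|<n/L$. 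I would then attach to $y$ the label $(i,E(y),\phi_y)$, where $\phi_y\colon E(y)\to\{1,\dots,N\}$ is any function with $T^k y\in W_{\phi_y(k)}$ for every $k\in E(y)$.

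The labelling discriminates points up to $d_n$-error $2L\varepsilon$: if $y,y'$ share a label, then on $[n]\setminus E(y)$ both orbits lie within $L\varepsilon$ of the reference orbit $T^k p_i$ (so $d(T^k y,T^k y')<2L\varepsilon$), while on $E(y)$ both orbit points lie in the same $W_{\phi_y(k)}$ of diameter less than $\varepsilon<2L\varepsilon$. The number of labels is at most $M\cdot\sum_{j=0}^{\lfloor n/L\rfloor}\binom{n}{j}\cdot N^{j}\leq M\cdot 2^n\cdot N^{n/L}$, using $\binom{n}{0}+\binom{n}{1}+\dots+\binom{n}{\lfloor n/L\rfloor}\leq 2^n$ and $N^{|E|}\leq N^{n/L}$. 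Taking logarithms and dividing by $n$ yields the stated inequality.

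There is no genuine obstacle: the proof is driven by the one-line Markov trade-off between the $\bar{d}_n$-average and the pointwise deviation threshold $L\varepsilon$, and the rest is a crude but sufficient counting estimate. The only minor technicality is the distinction between strict and weak diameter in the definition of $\#(\mathcal{X},d_n,\cdot)$; this is handled painlessly by slightly perturbing the threshold $L\varepsilon$ downward by an arbitrarily small amount and passing to a limit.
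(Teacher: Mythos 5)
Your proposal is correct and follows essentially the same strategy as the paper's proof: take a $\bar{d}_n$-cover with reference points, use Markov's inequality to bound the number of ``bad'' time indices by $n/L$, and then label by which $d$-cover element the orbit falls into at each bad index. The paper realizes the label classes as explicit open sets (intersections of $d_{[n]\setminus A}$-balls around the reference points with translates $T^{-k}W_m$ of the $d$-cover elements), but the resulting count $\#(\mathcal{X},\bar{d}_n,\varepsilon)\cdot 2^n\cdot\#(\mathcal{X},d,\varepsilon)^{n/L}$ is identical.
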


\begin{proof}
Let $X= W_1\cup\dots\cup W_M$ be an open covering such that $\diam(W_m, d)<\varepsilon$ for all $1\leq m\leq M$ and 
$M=\#(\mathcal{X},d,\varepsilon)$.
We also take an open covering $X=U_1\cup\dots\cup U_N$ such that
$\diam(U_i, \bar{d}_n)<\varepsilon$ for all $1\leq i\leq N$ and $N=\#(\mathcal{X},\bar{d}_n,\varepsilon)$.

We choose a point $p_i\in U_i$ for each $1\leq i \leq N$.
Every point $x\in U_i$ satisfies
$\bar{d}_n(x,p_i)<\varepsilon$,
and hence 
\[ |\{0 \leq k\leq n-1 |\, d(T^k x, T^k p_i)\geq L\varepsilon\}| < \frac{n}{L}. \]
It follows that $U_i$ is contained in the union of the open balls 
\[  B_{L\varepsilon}(p_i, d_{[n]\setminus A}) , \]
where $A$ runs over subsets of $[n] = \{0,1,2,\dots,n-1\}$ satisfying $|A|< n/L$. 
For $A= \{k_1, \dots, k_a\} \subset [n]$ with $a<n/L$, the ball $B_{L\varepsilon}(p_i, d_{[n]\setminus A})$ is equal to the union of 
\begin{equation} \label{eq: ball cap W}
 B_{L\varepsilon}(p_i, d_{[n]\setminus A}) \cap T^{-k_1}W_{m_1}\cap \dots \cap T^{-k_a}W_{m_a}, \quad 
   (1\leq m_1,\dots, m_a \leq M).
\end{equation} 
The sets (\ref{eq: ball cap W}) have diameter less than $2L\varepsilon$ with respect to the distance 
$d_n$. 
Hence 
\[ \#(B_{L\varepsilon}(p_i, d_{[n]\setminus A}), d_n, 2L\varepsilon) \leq M^a \leq M^{n/L}. \]
There are $N$ choices of $U_i$ and $2^n$ choices of $A\subset [n]$. Thus
\[ \#(X, d_n, 2L\varepsilon) \leq 2^n M^{n/L} N. \]
This proves the statement.
\end{proof}

\begin{lemma}   \label{lemma: implication of metric condition}
Under Condition \ref{condition: distance}, 
\begin{align*}
 \overline{\mdim}_\mathrm{M}(\mathcal{X},T,d) & = \limsup_{\varepsilon\to 0} \frac{\tilde{S}(\mathcal{X},T,d,\varepsilon)}{|\log\varepsilon|},\\
   \underline{\mdim}_\mathrm{M}(\mathcal{X},T,d) 
   & = \liminf_{\varepsilon\to 0} \frac{\tilde{S}(\mathcal{X},T,d,\varepsilon)}{|\log\varepsilon|}.  
\end{align*}
\end{lemma}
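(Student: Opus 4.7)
The plan is to prove the desired equalities by establishing two inequalities. One direction is essentially automatic from the subadditivity remark that $\bar d_n \leq d_n$: this gives $\#(\mathcal X, \bar d_n, \varepsilon) \leq \#(\mathcal X, d_n, \varepsilon)$ and hence the pointwise comparison $\tilde S(\mathcal X, T, d, \varepsilon) \leq S(\mathcal X, T, d, \varepsilon)$ for every $\varepsilon > 0$. Dividing by $|\log\varepsilon|$ and passing to $\limsup$ (respectively $\liminf$) gives
\[
\limsup_{\varepsilon \to 0} \frac{\tilde S(\mathcal X, T, d, \varepsilon)}{|\log\varepsilon|} \leq \overline{\mdim}_{\mathrm M}(\mathcal X, T, d),
\]
and likewise for the lower versions. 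Notice no hypothesis on the metric is needed here.

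For the reverse inequality, my plan is to feed Lemma \ref{lemma: comparison between d_N and d^bar_N} a scale-dependent choice of $L$ that is tuned exactly to Condition \ref{condition: distance}. Fix a small parameter $\delta > 0$ and choose $L = L(\varepsilon) = \varepsilon^{-\delta}$. Letting $n \to \infty$ in Lemma \ref{lemma: comparison between d_N and d^bar_N} gives
\[
S(\mathcal X, T, d, 2\varepsilon^{1-\delta}) \leq \log 2 + \varepsilon^{\delta}\log\#(\mathcal X, d, \varepsilon) + \tilde S(\mathcal X, T, d, \varepsilon).
\]
Now Condition \ref{condition: distance} is precisely the statement that the middle term tends to $0$ as $\varepsilon \to 0$.

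Setting $\varepsilon' = 2\varepsilon^{1-\delta}$, so that $|\log\varepsilon'| = (1-\delta)|\log\varepsilon| + O(1)$, dividing by $|\log\varepsilon'|$, and taking $\limsup$ as $\varepsilon \to 0$ (equivalently $\varepsilon' \to 0$, since $\varepsilon \mapsto 2\varepsilon^{1-\delta}$ is a continuous bijection on a neighborhood of $0$) gives
\[
\overline{\mdim}_{\mathrm M}(\mathcal X, T, d) \leq \frac{1}{1-\delta} \limsup_{\varepsilon \to 0} \frac{\tilde S(\mathcal X, T, d, \varepsilon)}{|\log\varepsilon|}.
\]
Letting $\delta \to 0$ closes the gap for the upper mean dimension; the identical argument with $\liminf$ in place of $\limsup$ handles the lower one.

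The main obstacle is really isolated in a single step: choosing $L(\varepsilon)$ to balance two competing demands. One needs $L \to \infty$ fast enough that the term $\frac{1}{L}\log\#(\mathcal X, d, \varepsilon)$ becomes negligible compared to $|\log\varepsilon|$, yet slowly enough that $2L\varepsilon$ still shrinks to $0$ and $|\log(2L\varepsilon)|$ stays comparable to $|\log\varepsilon|$ up to a factor $(1-\delta)$. The choice $L = \varepsilon^{-\delta}$ achieves this precisely because Condition \ref{condition: distance} is exactly what makes $\varepsilon^{\delta} \log\#(\mathcal X, d, \varepsilon) \to 0$ for every $\delta > 0$; without this hypothesis the same argument breaks down, in line with Proposition \ref{prop: counter example}.
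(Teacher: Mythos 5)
Your proposal is correct and follows essentially the same route as the paper: the easy inequality from $\bar d_n\leq d_n$, then Lemma \ref{lemma: comparison between d_N and d^bar_N} with the scale-dependent choice $L=(1/\varepsilon)^\delta$, using Condition \ref{condition: distance} to kill the middle term, and letting $\delta\to 0$ after the change of scale $\varepsilon\mapsto 2\varepsilon^{1-\delta}$. The only difference is that you spell out the change of variables $\varepsilon'=2\varepsilon^{1-\delta}$ explicitly, which the paper leaves implicit; no gap either way.
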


\begin{proof}
We prove the equality for $\overline{\mdim}_\mathrm{M}(\mathcal{X},T,d)$.
The case of $\underline{\mdim}_\mathrm{M}(\mathcal{X},T,d)$ is the same.
From $S(\mathcal{X},T,d,\varepsilon)  \geq \tilde{S}(\mathcal{X},T,d,\varepsilon)$, the inequality 
\[   \overline{\mdim}_\mathrm{M}(\mathcal{X},T,d)   = \limsup_{\varepsilon\to 0}
    \frac{S(\mathcal{X},T,d,\varepsilon)}{|\log \varepsilon|} 
   \geq  \limsup_{\varepsilon\to 0} \frac{\tilde{S}(\mathcal{X},T,d,\varepsilon)}{|\log\varepsilon|} \]
is obvious.
Take $0< \delta <1$ and apply Lemma \ref{lemma: comparison between d_N and d^bar_N} with $L=(1/\varepsilon)^\delta$.
Then we get 
\[ \frac{1}{n} \log \#(\mathcal{X},d_n,2\varepsilon^{1-\delta}) \leq 
   \log 2 + \frac{\log \#(\mathcal{X},d,\varepsilon)}{(1/\varepsilon)^\delta} + 
    \frac{1}{n} \log \#(\mathcal{X},\bar{d}_n,\varepsilon).  \]
Letting $n\to \infty$
\[  S(\mathcal{X},T,d, 2\varepsilon^{1-\delta}) \leq \log 2 + \varepsilon^\delta \log\#(\mathcal{X},d,\varepsilon)
     +  \tilde{S}(\mathcal{X},T,d,\varepsilon). \]
By Condition \ref{condition: distance}, the second term in the right-hand side goes to zero as $\varepsilon\to 0$ (this is the only place where we use Condition \ref{condition: distance}).
It follows that 
\[   (1-\delta)\cdot \overline{\mdim}_\mathrm{M}(\mathcal{X},T,d) 
       \leq \limsup_{\varepsilon\to 0} \frac{\tilde{S}(\mathcal{X},T,d,\varepsilon)}{|\log\varepsilon|} . \]
Letting $\delta\to 0$, we get the statement.
\end{proof}

\subsection{Completion of the proof of Theorem \ref{thm: main theorem}}

For $n\geq 1$ we define a distance $\bar{d}_n$ on $\mathcal{X}^n$ by 
\[ \bar{d}_n\left((x_0,\dots, x_{n-1}), (y_0,\dots, y_{n-1})\right) 
   = \frac{1}{n} \sum_{k=0}^{n-1} d(x_k, y_k). \]
In particular 
\[ \bar{d}_n(x,y) = \bar{d}_n\left((x,T x, \dots, T^{n-1}x), (y, Ty, \dots, T^{n-1}y)\right) \quad (x,y\in \mathcal{X}). \]

\begin{proposition}  \label{prop: construction of invariant measure}
For any real numbers $\varepsilon>0$ and $D>2$
there exists an invariant probability measure $\mu$ on $\mathcal{X}$ satisfying 
\[ R_\mu(\varepsilon) \geq \left(1-\frac{1}{D}\right)\tilde{S}(\mathcal{X},T,d,(12D+4)\varepsilon). \]
\end{proposition}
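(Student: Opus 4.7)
The plan is to adapt Misiurewicz's proof of the classical variational principle for topological entropy to the rate distortion setting. Corollary~\ref{cor: uniform distribution over separated set} will play the role of the elementary fact that the uniform measure has maximal entropy, while concavity of mutual information in the source (Lemma~\ref{lemma: concavity/convexity of mutual information}(1)) will replace concavity of Shannon entropy.

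I will first construct the invariant measure. Setting $s := \tilde{S}(\mathcal{X}, T, d, (12D+4)\varepsilon)$, for each $n$ I take $E_n \subset \mathcal{X}$ to be a \emph{maximal} $(6D+2)\varepsilon$-separated set with respect to $\bar{d}_n$. Since balls of $\bar{d}_n$-radius $(6D+2)\varepsilon$ around $E_n$ cover $\mathcal{X}$ and have diameter less than $(12D+4)\varepsilon$, one has $|E_n| \geq \#(\mathcal{X}, \bar{d}_n, (12D+4)\varepsilon)$ and hence $\liminf_n \tfrac{1}{n}\log|E_n| \geq s$. Let $\sigma_n$ be the uniform probability measure on $E_n$ and put $\mu_n := \tfrac{1}{n}\sum_{k=0}^{n-1} T^k_*\sigma_n$. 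A weak-$*$ convergent subsequence $\mu_{n_j} \to \mu$ exists by compactness, and the bound $\|T_*\mu_n - \mu_n\| \leq 2/n$ ensures $\mu \in \mathscr{M}^T(\mathcal{X})$.

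Next I will establish a central lower bound for the non-invariant $\sigma_n$. For $W \sim \sigma_n$ the orbit lift $\Phi_n(W) := (W, TW, \ldots, T^{n-1}W)$ is uniformly distributed over a $(6D+2)\varepsilon$-separated subset of $(\mathcal{X}^n, \bar{d}_n)$ of size $|E_n|$. Since $6D + 2 \geq 2D$, Corollary~\ref{cor: uniform distribution over separated set} applied in the metric space $(\mathcal{X}^n, \bar{d}_n)$ with parameter $D$ yields, for every $\mathcal{X}^n$-valued random variable $Y$ with $\mathbb{E}[\bar{d}_n(\Phi_n(W), Y)] < \varepsilon$,
\[ I(W; Y) \;=\; I(\Phi_n(W); Y) \;\geq\; (1 - 1/D)\log|E_n| - H(1/D). \]

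The remaining task is to transfer this estimate to the invariant limit $\mu$, and this will be the hard part. For invariant $\mu$, the length-$N$ restricted rate distortion function $R_\mu^{(N)}(\varepsilon) := \inf I(X;Y)/N$ (infimum over admissible length-$N$ channels) has the property that $N \mapsto N R_\mu^{(N)}(\varepsilon)$ is subadditive (via block concatenation, invariance of $\mu$, and Lemma~\ref{lemma: subadditivity of mutual information}); Fekete's lemma then gives $R_\mu(\varepsilon) = \lim_N R_\mu^{(N)}(\varepsilon)$, reducing us to the bound $R_\mu^{(n_j)}(\varepsilon) \geq (1-1/D)s - o(1)$ along our subsequence. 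Given any admissible length-$n_j$ channel $p$ for $\mu$, I will apply $p$ instead to the source $\mu_{n_j}$; a finite-alphabet reduction for $Y$ (Remark~\ref{remark: reduction to discrete case}) combined with Lemma~\ref{lemma: convergence of mutual information} and weak-$*$ convergence will control both the mutual information and the distortion. Decomposing $\mu_{n_j} = \tfrac{1}{n_j}\sum_k T^k_*\sigma_{n_j}$ and invoking concavity of $I(\cdot, p)$ in the source then reduces the estimate to averaged lower bounds on the shifts $T^k_*\sigma_{n_j}$. The delicate point is that the orbit lift of a shifted measure is only \emph{mostly} inside the orbit lift of $\sigma_{n_j}$ --- points near the right end leak outside --- so I expect to restrict to shifts $k \leq n_j - n_j'$ for a slowly growing window $n_j'$ and absorb the $O(n_j'/n_j)$ boundary error into $o(1)$. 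The slack between the actual separation $(6D+2)\varepsilon$ and the $2D\varepsilon$ required by Corollary~\ref{cor: uniform distribution over separated set} is precisely what allows this absorption.
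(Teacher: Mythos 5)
Your construction of $\mu$, the use of Corollary~\ref{cor: uniform distribution over separated set}, and the Fekete/subadditivity reduction to block lengths $n_j$ are fine, but the transfer step --- the part you yourself flag as hard --- has a genuine gap, and the fix you sketch does not work. After writing $\mu_{n_j}=\tfrac1{n_j}\sum_k T^k_*\sigma_{n_j}$ and invoking concavity, you need, for \emph{most} $k\in[0,n_j)$, a lower bound of order $\log|E_{n_j}|$ on $I(T^k_*\sigma_{n_j},p)$. But Corollary~\ref{cor: uniform distribution over separated set} requires the orbit lift of the source to be separated in $\bar d_{n_j}$, and the shifted set $T^kE_{n_j}$ carries no such property: separation of $E_{n_j}$ is known only for the window $[0,n_j)$, while the lift of $T^k_*\sigma_{n_j}$ lives on the window $[k,k+n_j)$, whose overlap with $[0,n_j)$ has length $n_j-k$. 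For $k$ a positive fraction of $n_j$ (which is most terms of the average) the separation guaranteed on the overlap, $\bigl(n_j(6D+2)\varepsilon-k\cdot\diam(\mathcal X,d)\bigr)/n_j$, is vacuous, and $T^k$ may even collapse points of $E_{n_j}$. So this is not an $O(n_j'/n_j)$ boundary effect absorbable by the slack between $(6D+2)\varepsilon$ and $2D\varepsilon$; it is the main term, and restricting to shifts $k\le n_j-n_j'$ (or $k\le n_j'$) either leaves the problem intact or discards all but a vanishing fraction of the average.

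A second, related gap: ``apply $p$ instead to the source $\mu_{n_j}$'' is not defined ($p(\cdot|x)$ exists only $\mu$-a.e., and $\mu_{n_j}$ is atomic, typically singular with respect to $\mu$), and the needed comparison of distortion and mutual information between the two sources is for the \emph{full length-$n_j$ process laws}, which weak-$*$ convergence $\mu_{n_j}\to\mu$ does not control: it only governs observables of fixed window size. This is exactly why the paper keeps the channel length $m$ \emph{fixed}, discretizes by a partition with $\mu$-null boundaries, couples $\mathcal P^m_*\mu_n$ to $\mathcal P^m_*\mu$ by an optimal transference plan (Claim~\ref{claim: weak convergence of pi_N} and the Appendix), and then goes in the opposite direction from yours: it decomposes the \emph{long orbit} into length-$m$ blocks and concatenates shifted copies of the short channel to build a channel for the \emph{unshifted} $\nu_n$, so that convexity, subadditivity and concavity of $I$ (Claim~\ref{claim: lower bound on mutual information}) are all used in directions that never require separation of shifted sets, and Corollary~\ref{cor: uniform distribution over separated set} is applied once, to $\mathcal P^n_*\nu_n$, where separation is known. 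To repair your argument you would essentially have to invert your scheme along these lines; as proposed, the key estimate fails.
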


\begin{proof}
For each $n \geq 1$ we choose $S_n \subset \mathcal{X}$ a maximal $(6D+2)\varepsilon$-separated set
with respect to the distance $\bar{d}_n$.  
It follows 
\begin{equation} \label{eq: separated set}
  |S_n| \geq \#(\mathcal{X},\bar{d}_n, (12D+4)\varepsilon).
\end{equation}
Let $\nu_n$ be the uniform distribution over $S_n$: 
\[ \nu_n = \frac{1}{|S_n|} \sum_{p\in S_n} \delta_p. \]
Set 
\[ \mu_n = \frac{1}{n} \sum_{k=0}^{n-1} T^k_* \nu_n. \]
We can choose a subsequence $\{\mu_{n_i}\}_{i=1}^\infty$ converging to an invariant probability 
measure $\mu$ in the weak$^*$ topology.
We prove that this $\mu$ satisfies the statement.

We choose a partition $\mathcal{P} = \{P_1,\dots, P_K\}$ of $\mathcal{X}$ such that 
\begin{itemize}
   \item Every $P_k$ has diameter smaller than $\varepsilon$ with respect to the distance $d$.
   \item $\mu(\partial P_k)= 0$ for all $1\leq k \leq K$.
\end{itemize}
We choose a point $p_k\in P_k$ for each $1\leq k\leq K$.
Set $A=\{p_1,\dots,p_K\}$.
We define a map $\mathcal{P}:\mathcal{X}\to A$ by 
$\mathcal{P}(x) = p_k$ for $x\in P_k$.
It follows that
\begin{equation} \label{eq: diameter of partition}
   d(x, \mathcal{P}(x)) < \varepsilon.
\end{equation}
For $n\geq 1$ we set 
\[ \mathcal{P}^n(x) = (\mathcal{P}(x), \mathcal{P}(T x), \dots, \mathcal{P}(T^{n-1} x)). \]

\begin{claim}   \label{claim: P_0^{N-1}(S_N)}
   \begin{enumerate}
     \item  The set 
       \[ \mathcal{P}^n(S_n) = \{\mathcal{P}^n(x)|\, x\in S_n\} \]
       is a $6D\varepsilon$-separated set with respect to the distance $\bar{d}_n$.
     \item The push-forward measure $\mathcal{P}^n_*\nu_n$ is the uniform distribution over
     $\mathcal{P}^n(S_n)$. Moreover $|\mathcal{P}^n(S_n)| = |S_n|$.
 \end{enumerate} 
\end{claim}

\begin{proof}
By (\ref{eq: diameter of partition}) we have $\bar{d}_n\left((x, Tx, \dots, T^{n-1}x), \mathcal{P}^n (x)\right) < \varepsilon$.
For any two distinct points $x,y$ in $S_n$, the distance $\bar{d}_n(\mathcal{P}^n(x), \mathcal{P}^n(y))$
is bounded from below by 
\begin{equation*}
  \begin{split}
    & \bar{d}_n(x,y) -  \bar{d}_n\left((x, Tx, \dots, T^{n-1}x), \mathcal{P}^n(x)\right) 
       - \bar{d}_n \left((y, Ty, \dots, T^{n-1} y), \mathcal{P}^n(y)\right) \\
    & \geq (6D+2) \varepsilon -2\varepsilon = 6D\varepsilon. 
  \end{split} 
\end{equation*}   
This proves part (1) of the claim. Moreover it shows that the map 
\[  S_n \ni x\mapsto \mathcal{P}^n(x) \in \mathcal{P}^n(S_n) \]
is bijective. Since $\nu_n$ is uniformly distributed over $S_n$, the measure $\mathcal{P}^n_*\nu_n$ 
is uniformly distributed over $\mathcal{P}^n(S_n)$.
This establishes part (2).
\end{proof}

Consider random variables $X$ and $Y = (Y_0,\dots,Y_{m-1})$
defined on a probability space $(\Omega, \mathbb{P})$ such that 
$\mathrm{Law}(X) = \mu$ and $Y_i$ take values in $\mathcal{X}$ with
\begin{equation} \label{eq: distortion condition in the proof}
  \mathbb{E}\left(\frac{1}{m} \sum_{i=0}^{m-1} d(T^i X, Y_i) \right) < \varepsilon. 
\end{equation}  
We estimate the mutual information $I(X;Y)$ from below.
As in Remark \ref{remark: reduction to discrete case}, we can assume that the distribution of $Y$ is
supported on a finite set $\mathcal{Y}\subset \mathcal{X}^m$.
From the data-processing inequality (Lemma \ref{lemma: data-processing inequality})
\[  I(X;Y) \geq I(\mathcal{P}^m(X); Y). \]
So it is enough to estimate $I(\mathcal{P}^m(X); Y)$ from below.
Let $\tau =\mathrm{Law}(\mathcal{P}^m(X),Y)$ be the law of $(\mathcal{P}^m (X),Y)$, 
which is a probability measure on 
$A^m \times \mathcal{Y}$.
It follows that
\begin{equation}  \label{eq: distortion condition in the proof: integral form}
   \begin{split}
    \int_{A^m \times \mathcal{Y}} \bar{d}_m(x, y)\,  d\tau(x,y)  &= 
    \mathbb{E}\left(\frac{1}{m}\sum_{i=0}^{m-1} d(\mathcal{P}(T^i X), Y_i)\right) \\
     &\leq \varepsilon +  \mathbb{E}\left(\frac{1}{m} \sum_{i=0}^{m-1} d(T^i X, Y_i) \right) < 2\varepsilon. 
   \end{split}
\end{equation}   
Here we used $d(\mathcal{P}(T^i X), T^i X) < \varepsilon$ and (\ref{eq: distortion condition in the proof}).

For each $n \geq 1$ we choose a probability measure $\pi_n$ on $A^m \times A^m$ such that 
\begin{itemize}
  \item $\pi_n$ is a coupling of $(\mathcal{P}^m_*\mu_n, \mathcal{P}^m_*\mu)$, 
  namely its first and second marginals are $\mathcal{P}^m_*\mu_n$ and $\mathcal{P}^m_*\mu$ respectively.
  \item $\pi_n$ minimizes the integral
  \[  \int_{A^m\times A^m} \bar{d}_m(x,y) d\pi(x,y) \]
  among all couplings $\pi$ of $(\mathcal{P}^m_*\mu_n, \mathcal{P}^m_*\mu)$.
\end{itemize}
(These two conditions means that $\pi_n$ is an optimal transference plan in the language of 
Optimal Transport.)

\begin{claim}  \label{claim: weak convergence of pi_N}
The sequence $\pi_{n_i}$ converges to $(\mathcal{P}^m\times \mathcal{P}^m)_*\mu$ in the weak$^*$ topology.
\end{claim}

\begin{proof}
Since $\mu(\partial P_k)=0$, the sequence $\mathcal{P}^m_*\mu_{n_i}$ converges to $\mathcal{P}^m_*\mu$.
Then the statement becomes a very special case of a theorem of optimal transport
\cite[Theorem 5.20]{Villani}.
As all the measures here are supported on finite sets, our
situation is simpler than the general setting in \cite{Villani}, and
we provide a self-contained elementary proof in Lemma \ref{lemma: convergence of optimal transport plan} in the Appendix.
\end{proof}

Both the second marginal of $\pi_n$ and the first marginal of $\tau$ are equal to the measure $\mathcal{P}^m_*\mu$.
So we can compose them and produce a coupling $\tau_n$ of
$(\mathcal{P}^m_*\mu_n, \mathrm{Law}(Y))$.
Namely 
\[ \tau_n(x, y) = \sum_{x'\in A^m} \pi_n(x, x') 
     \mathbb{P}(Y=y|\mathcal{P}^m(X)=x'), \quad 
      (x\in A^m, y\in \mathcal{Y}). \]
Here we identify probability measures with their probability mass functions.
From Claim \ref{claim: weak convergence of pi_N} the measures $\tau_{n_i}$ converge to $\tau$ in the weak$^*$ topology.
In particular, it follows from (\ref{eq: distortion condition in the proof: integral form}) that
\begin{equation} \label{eq: distortion of tau_N}
    \mathbb{E}_{\tau_{n_i}} (\bar{d}_m(x,y)) := 
    \int_{A^m \times \mathcal{Y}} \bar{d}_m(x,y) \, d\tau_{n_i}(x,y) < 2\varepsilon 
\end{equation}    
for all sufficiently large $n_i$.

We define a conditional probability mass function $\tau_n(y|x)$ by 
\[ \tau_n(y| x) = \frac{\tau_n(x,y)}{\mathcal{P}^m_*\mu_n(x)}. \]
This is defined for 
\[ x \in \bigcup_{k=0}^{n-1} \mathcal{P}^m\left(T^{k} S_n\right), \quad y\in \mathcal{X}^{m}.  \]

Take $n\geq 2m$ and 
let $n=qm + r$ with $m\leq r\leq 2m-1$. Fix a point $a\in \mathcal{X}$.
We denote by $\delta_a(\cdot)$ the delta probability measure at $a$ on $\mathcal{X}$.
For $x =(x_1,\dots,x_n) \in \mathcal{P}^n(S_n)$ we let $x_k^l$ denote the $(l-k+1)$-tuple $x_k^l=  (x_k,\dots, x_l)$ for $0\leq k\leq l<n$. For such an $x$ we
define probability mass functions 
$\sigma_{n,0}(\cdot|x),\dots, \sigma_{n,m-1}(\cdot|x)$ on $\mathcal{X}^n$ as follows:
\begin{equation}  \label{eq: definition of conditional probability functions}
   \begin{split}
    \sigma_{n,0}(y|x) =& \prod_{j=0}^{q-1} \tau_n\left(y_{jm}^{jm+m-1}|x_{jm}^{jm+m-1}\right)
                                                 \cdot \prod_{k=n-r}^{n-1} \delta_a(y_k), \\
   \sigma_{n,1}(y|x)  =&  \delta_a(y_0)\cdot \prod_{j=0}^{q-1}\tau_n \left(y_{jm+1}^{jm+m}| x_{jm+1}^{jm+m}\right) \cdot 
                                                   \prod_{k=n-r+1}^{n-1} \delta_a(y_k), \\
                                                   &\dots \\
   \sigma_{n,m-1}(y|x) = & \prod_{k=0}^{m-2}\delta_a(y_k) \cdot \prod_{j=0}^{q-1} \tau_n \left(y_{jm+m-1}^{jm+2m-2}| x_{jm+m-1}^{jm+2m-2}\right) 
                                    \cdot \prod_{k=n-r+m-1}^{n-1}\delta_a(y_k).                
   \end{split}
\end{equation}
See Figure \ref{figure: definition of sigma_{n,i}}.
Finally we set 
\[  \sigma_n(y|x) = \frac{\sigma_{n,0}(y|x) + \sigma_{n,1}(y|x) + \dots + \sigma_{n,m-1}(y|x)}{m}. \]

\begin{figure}[!htbp]
	\centering
    \includegraphics[width=\textwidth,bb= 0 0 480 80]{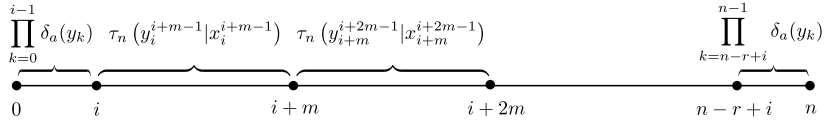}
    \caption{Definition of $\sigma_{n,i}(y|x)$}
	\label{figure: definition of sigma_{n,i}}
\end{figure}

\begin{claim}  \label{claim: lower bound on mutual information}
\[  \frac{1}{m} I\left(\mathcal{P}^m_*\mu_n, \tau_n\right) 
     \geq  \frac{1}{n} I\left(\mathcal{P}^n_*\nu_n, \sigma_n\right). \] 
     Here $I\left(\mathcal{P}^m_*\mu_n, \tau_n\right)$ and $I\left(\mathcal{P}^n_*\nu_n, \sigma_n\right)$
     are the mutual informations of the probability distributions 
\[ \mathcal{P}^m_*\mu_n(x) \tau_n(y|x), \quad \mathcal{P}^n_*\nu_n(x)\sigma_n(y|x) \]
respectively.
\end{claim}

\begin{proof}
We use the concavity/convexity of mutual information (Lemma \ref{lemma: concavity/convexity of mutual information}).
From the convexity 
\[  I\left(\mathcal{P}^n_*\nu_n, \sigma_n\right) \leq \frac{1}{m} \sum_{i=0}^{m-1}
    I\left(\mathcal{P}^n_*\nu_n, \sigma_{n,i} \right).    \]
From (\ref{eq: definition of conditional probability functions}) and the subadditivity of mutual information 
(Lemma \ref{lemma: subadditivity of mutual information})
\[  I\left(\mathcal{P}^n_*\nu_n, \sigma_{n,i} \right) \leq 
    \sum_{j=0}^{q-1}  I\left(\mathcal{P}^m_*(T^{i+jm}_*\nu_n), \tau_n\right)  . \]
Therefore     
\begin{equation*}
  \begin{split}
     I\left(\mathcal{P}^n_*\nu_n, \sigma_n\right)  &\leq \frac{1}{m}  \sum_{i=0}^{m-1}  \sum_{j=0}^{q-1}
      I\left(\mathcal{P}^m_*(T^{i+jm}_*\nu_n), \tau_n\right) \\
      &= \frac{1}{m}\sum_{k=0}^{qm-1} I\left(\mathcal{P}^m_*(T^k_*\nu_n), \tau_n\right) \\
      &\leq     \frac{1}{m} \sum_{k=0}^{n-1}  I\left(\mathcal{P}^m_*(T^k_*\nu_n), \tau_n\right) \\
     &\leq  \frac{n}{m} I\left(\mathcal{P}^m_*\left(\frac{1}{n}\sum_{k=0}^{n-1}T^k_*\nu_n \right), \tau_n\right)\\
     &\qquad\qquad (\text{by the concavity in Lemma \ref{lemma: concavity/convexity of mutual information} (1)}) \\
      & = \frac{n}{m} I\left(\mathcal{P}^m_*\mu_n, \tau_n\right) \quad 
      (\text{by }\mu_n = \frac{1}{n} \sum_{k=0}^{n-1}T^k_*\nu_n).      
  \end{split}
\end{equation*}  
We would like to remark that the above calculation is quite analogous to Misiurewicz's proof
\cite{Misiurewicz} of the standard variational principle.
\end{proof}

\begin{claim}  \label{claim: distortion is sufficiently small}
We denote by $\mathbb{E}_{\mathcal{P}^n_*\nu_n, \sigma_n}(\bar{d}_n(x,y))$ the expected value of $\bar{d}_n(x,y)$ 
 $(x,y\in \mathcal{X}^n)$
with respect to the probability measure 
\[  \mathcal{P}^n_*\nu_{n}(x) \sigma_{n}(y|x).  \]
Then $\mathbb{E}_{\mathcal{P}^{n_i}_*\nu_{n_i}, \sigma_{n_i}} (\bar{d}_{n_i}(x,y)) <3\varepsilon$ for sufficiently large $n_i$.
Moreover 
\begin{equation}  \label{eq: mutual information is sufficiently large}
    I\left(\mathcal{P}^{n_i}_*\nu_{n_i}, \sigma_{n_i}\right) \geq 
    \left(1-\frac{1}{D}\right) \log |S_{n_i}| - H(1/D) \quad 
    \text{for sufficiently large $n_i$}.
\end{equation}    
\end{claim}

\begin{proof}
\[ \mathbb{E}_{\mathcal{P}^n_*\nu_n, \sigma_n} \left(\bar{d}_n(x,y)\right) = 
   \frac{1}{m} \sum_{i=0}^{m-1} \mathbb{E}_{\mathcal{P}^n_*\nu_n, \, \sigma_{n,i}}\left(\bar{d}_n(x,y)\right).    \]
By (\ref{eq: definition of conditional probability functions})
\begin{equation*}
   \frac{1}{m}    \mathbb{E}_{\mathcal{P}^n_*\nu_n, \, \sigma_{n,i}}\left(\bar{d}_n(x,y)\right)    
    \leq \frac{1}{n} \sum_{j=0}^{q-1} \mathbb{E}_{\mathcal{P}^m_*(T^{i+jm})_*\nu_n, \tau_n}\left(\bar{d}_m(x',y')\right)
    + \frac{r \cdot \diam(\mathcal{X},d)}{mn}.
\end{equation*}
Here $x,y$ are random points in $\mathcal{X}^n$, whereas $x',y'$ are in $\mathcal{X}^m$.
Therefore 
\begin{equation*}
   \begin{split}
   \mathbb{E}_{\mathcal{P}^n_*\nu_n, \sigma_n} \left(\bar{d}_n(x,y)\right)  &\leq 
     \frac{1}{n} \sum_{i=0}^{m-1} \sum_{j=0}^{q-1}  \mathbb{E}_{\mathcal{P}^m_*(T^{i+jm})_*\nu_n, \, \tau_n}\left(\bar{d}_m(x',y')\right)
     + \frac{r\cdot \diam(\mathcal{X},d)}{n} \\
     &= \frac{1}{n} \sum_{k=0}^{qm-1} \mathbb{E}_{\mathcal{P}^m_*(T^k_*\nu_n),\, \tau_n}\left(\bar{d}_m(x',y')\right)
     + \frac{r\cdot \diam(\mathcal{X},d)}{n} \\
     & \leq  \frac{1}{n} \sum_{k=0}^{n-1} \mathbb{E}_{\mathcal{P}^m_*(T^k_*\nu_n),\, \tau_n}\left(\bar{d}_m(x',y')\right)
     + \frac{r\cdot \diam(\mathcal{X},d)}{n}  \\
     & = \mathbb{E}_{\mathcal{P}^m_*\mu_n,\, \tau_n}\left(\bar{d}_m(x',y')\right) 
     + \frac{r\cdot \diam(\mathcal{X},d)}{n}.
   \end{split}
\end{equation*}   
In the last line we used $\mu_n = (1/n)\sum_{k=0}^{n-1} T^k_*\nu_n$.
As a conclusion, 
\[ \mathbb{E}_{\mathcal{P}^n_*\nu_n, \sigma_n} \left(\bar{d}_n(x,y)\right)  \leq 
   \int_{A^m\times \mathcal{Y}} \bar{d}_m(x',y') d\tau_n(x',y') + \frac{r\cdot \diam(\mathcal{X},d)}{n}. \]
By (\ref{eq: distortion of tau_N}) and $r\leq 2m-1$, this is bounded by $3\varepsilon$ for sufficiently large 
$n=n_i$.

By Claim \ref{claim: P_0^{N-1}(S_N)}, $\mathcal{P}^n_* \nu_n$ is uniformly distributed over $\mathcal{P}^n(S_n)$, which is a
$(6D\varepsilon)$-separated set of cardinarity $|S_n|$.
Then (\ref{eq: mutual information is sufficiently large}) follows from
Corollary \ref{cor: uniform distribution over separated set}.
\end{proof}

We conclude that for sufficiently large $n_i$
\begin{equation*}
   \begin{split}
     \frac{1}{m} I\left(\mathcal{P}^m_*\mu_{n_i}, \tau_{n_i}\right)  &\geq  \frac{1}{n_i} I(\mathcal{P}^{n_i}_*\nu_{n_i},\sigma_{n_i})
          \quad (\text{by Claim \ref{claim: lower bound on mutual information}})   \\
     &\geq  \left(1-\frac{1}{D}\right) \frac{\log |S_{n_i}|}{n_i} - \frac{H(1/D)}{n_i}  
          \quad (\text{by Claim \ref{claim: distortion is sufficiently small}}) \\
     &\geq  \left(1-\frac{1}{D}\right) \frac{\log \#(\mathcal{X},\bar{d}_{n_i}, (12D+4)\varepsilon)}{n_i} - \frac{H(1/D)}{n_i}\\
     &\qquad\qquad  (\text{by (\ref{eq: separated set})}).
    \end{split}
\end{equation*}
   
The probability measures $\tau_{n_i}(x,y)$ converge to $\tau = \mathrm{Law}(\mathcal{P}^m(X), Y)$ in the weak$^*$ topology.
Therefore it follows from Lemma \ref{lemma: convergence of mutual information} that
\[ \frac{1}{m} I(\mathcal{P}^m(X); Y) \geq  \left(1-\frac{1}{D}\right) \tilde{S}(\mathcal{X},T,d, (12D+4)\varepsilon). \]
From the data-processing inequality (Lemma \ref{lemma: data-processing inequality}) 
\[  \frac{1}{m} I(X; Y) \geq  \left(1-\frac{1}{D}\right) \tilde{S}(\mathcal{X},T,d, (12D+4)\varepsilon). \]
This proves the statement.
\end{proof}

Lemma \ref{lemma: metric mean dimension dominates rate distortion function} and 
Proposition \ref{prop: construction of invariant measure} immediately imply:

\begin{corollary}  \label{cor: one version of variational principle}
\begin{equation*}
   \begin{split}
      \limsup_{\varepsilon\to 0} \frac{\tilde{S}(\mathcal{X},T,d,\varepsilon)}{|\log\varepsilon|}
     &= \limsup_{\varepsilon\to 0} \frac{\sup_{\mu\in \mathscr{M}^T(\mathcal{X})}R_\mu(\varepsilon)}{|\log \varepsilon|} \\
      \liminf_{\varepsilon\to 0} \frac{\tilde{S}(\mathcal{X},T,d,\varepsilon)}{|\log\varepsilon|}
     &= \liminf_{\varepsilon\to 0} \frac{\sup_{\mu\in \mathscr{M}^T(\mathcal{X})}R_\mu(\varepsilon)}{|\log \varepsilon|}.
   \end{split}  
\end{equation*}     
\end{corollary}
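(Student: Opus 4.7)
The plan is that Corollary~\ref{cor: one version of variational principle} reduces to a short arithmetic manipulation of the two prior results. I will do the $\limsup$ case; the $\liminf$ case is identical.

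For the $\leq$ direction, Lemma~\ref{lemma: metric mean dimension dominates rate distortion function} gives $R_\mu(\varepsilon) \leq \tilde{S}(\mathcal{X},T,d,\varepsilon)$ for every $\mu \in \mathscr{M}^T(\mathcal{X})$. Taking the supremum over $\mu$, dividing by $|\log \varepsilon|$, and passing to $\limsup_{\varepsilon \to 0}$ gives
\[
\limsup_{\varepsilon\to 0} \frac{\sup_{\mu}R_\mu(\varepsilon)}{|\log \varepsilon|} \leq \limsup_{\varepsilon\to 0} \frac{\tilde{S}(\mathcal{X},T,d,\varepsilon)}{|\log \varepsilon|}.
\]

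For the $\geq$ direction, fix $D > 2$. Proposition~\ref{prop: construction of invariant measure} produces, for every $\varepsilon > 0$, an invariant measure $\mu$ with $R_\mu(\varepsilon) \geq (1-1/D)\,\tilde{S}(\mathcal{X},T,d,(12D+4)\varepsilon)$. Setting $\varepsilon' = (12D+4)\varepsilon$, so that $|\log \varepsilon| = |\log \varepsilon'| + \log(12D+4)$ and hence $|\log \varepsilon|/|\log \varepsilon'| \to 1$ as $\varepsilon \to 0$, we obtain
\[
\frac{\sup_{\mu}R_\mu(\varepsilon)}{|\log \varepsilon|} \;\geq\; \Bigl(1-\tfrac{1}{D}\Bigr)\,\frac{\tilde{S}(\mathcal{X},T,d,\varepsilon')}{|\log \varepsilon'|} \cdot \frac{|\log \varepsilon'|}{|\log \varepsilon'|+\log(12D+4)}.
\]
Taking $\limsup$ on both sides as $\varepsilon \to 0$ (equivalently $\varepsilon' \to 0$) and using that the last factor converges to $1$ yields
\[
\limsup_{\varepsilon\to 0} \frac{\sup_{\mu}R_\mu(\varepsilon)}{|\log \varepsilon|} \;\geq\; \Bigl(1-\tfrac{1}{D}\Bigr) \limsup_{\varepsilon'\to 0} \frac{\tilde{S}(\mathcal{X},T,d,\varepsilon')}{|\log \varepsilon'|}.
\]
Finally, letting $D \to \infty$ gives the matching lower bound.

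Since both inputs (Lemma~\ref{lemma: metric mean dimension dominates rate distortion function} and Proposition~\ref{prop: construction of invariant measure}) are already in hand, there is no genuine obstacle. The only point that demands any care is ensuring that the rescaling $\varepsilon \mapsto (12D+4)\varepsilon$ does not affect the limit because the two logarithms differ by only an additive constant; this is automatic. The $\liminf$ statement follows verbatim by replacing $\limsup$ with $\liminf$ throughout.
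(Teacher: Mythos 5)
Your proof is correct and follows exactly the route the paper intends: the paper declares Corollary~\ref{cor: one version of variational principle} to follow ``immediately'' from Lemma~\ref{lemma: metric mean dimension dominates rate distortion function} and Proposition~\ref{prop: construction of invariant measure}, and you have simply spelled out the arithmetic (the rescaling $\varepsilon \mapsto (12D+4)\varepsilon$ changing $|\log\varepsilon|$ only by an additive constant, then letting $D\to\infty$) that the authors left implicit.
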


Theorem \ref{thm: main theorem} follows from 
Lemma \ref{lemma: implication of metric condition} and Corollary \ref{cor: one version of variational principle}.

\section{Proof of Theorem \ref{thm: second main theorem}} \label{section: proof of second main theorem}

Here we prove Theorem \ref{thm: second main theorem}.
The proof is very close to that of Theorem \ref{thm: main theorem}, and in view of this our
 explanation is more concise.
Throughout this section, $(\mathcal{X},T)$ is a dynamical system with a distance $d$.
For $x=(x_0,\dots,x_{n-1})$ and $y=(y_0,\dots,y_{n-1})$ in $\mathcal{X}^n$ we set 
\[ d_n(x,y) = \max_{0\leq i\leq n-1} d(x_i,y_i). \]

\begin{lemma}  \label{lemma: upper bound, second main theorem}
For every $\varepsilon>0$ and every invariant probability measure $\mu$ on $\mathcal{X}$ we have 
\[  \tilde{R}_\mu(\varepsilon) \leq S(\mathcal{X},T,d,\varepsilon). \]
\end{lemma}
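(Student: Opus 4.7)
The plan is to mimic the proof of Lemma~\ref{lemma: metric mean dimension dominates rate distortion function}, but to use minimal covers with respect to the uniform metric $d_n$ (rather than the averaged metric $\bar d_n$) so as to directly control the \emph{pointwise} distortion measured by~(\ref{eq: modified distortion condition}). Specifically, fix $\varepsilon>0$, $\alpha>0$ and $n\geq 1$. I would take an open cover $\{U_1,\dots,U_K\}$ of $\mathcal{X}$ with $K=\#(\mathcal{X},d_n,\varepsilon)$ and $\diam(U_k,d_n)<\varepsilon$ for every $k$, pick a representative $p_k\in U_k$, and define $f\colon\mathcal{X}\to\{p_1,\dots,p_K\}$ by $f(x)=p_k$ where $k$ is the smallest index with $x\in U_k$.

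Letting $X$ have law $\mu$ and setting $Y=(f(X),Tf(X),\dots,T^{n-1}f(X))$, the crucial observation is that $x$ and $f(x)$ belong to a common $U_k$, so
\[ d(T^i X, T^i f(X)) \leq d_n(X, f(X)) < \varepsilon \quad \text{for every } 0\leq i<n \]
with probability one. Consequently the number of indices $k\in[0,n-1]$ with $d(T^k X,Y_k)\geq\varepsilon$ is identically zero, and (\ref{eq: modified distortion condition}) holds for any $\alpha>0$. Since $Y$ is a deterministic function of $f(X)$, which takes at most $K$ values,
\[ I(X;Y) \leq H(Y) \leq \log K = \log\#(\mathcal{X},d_n,\varepsilon), \]
and therefore
\[ \tilde{R}_\mu(\varepsilon,\alpha) \leq \frac{I(X;Y)}{n} \leq \frac{\log\#(\mathcal{X},d_n,\varepsilon)}{n}. \]
Letting $n\to\infty$ gives $\tilde{R}_\mu(\varepsilon,\alpha)\leq S(\mathcal{X},T,d,\varepsilon)$ uniformly in $\alpha>0$, and letting $\alpha\to 0$ yields the claimed inequality.

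I do not anticipate any genuine obstacle. The only point worth flagging is that, in contrast with Lemma~\ref{lemma: metric mean dimension dominates rate distortion function} (where one covered by balls in $\bar d_n$ and got only an \emph{average} distortion bound), here we use the stronger $d_n$-diameter control and therefore obtain a \emph{deterministic} bound on every coordinate, which is exactly what (\ref{eq: modified distortion condition}) asks for. This is also what makes the inequality in this lemma phrased in terms of $S$ rather than $\tilde S$.
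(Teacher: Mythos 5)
Your proposal is correct and follows essentially the same route as the paper: cover $\mathcal{X}$ by sets of $d_n$-diameter less than $\varepsilon$, quantize via $f$, set $Y=(f(X),Tf(X),\dots,T^{n-1}f(X))$, note the bad-index count is identically zero so (\ref{eq: modified distortion condition}) holds for every $\alpha>0$, and bound $I(X;Y)\leq\log K$. Your closing remark about why the $d_n$-cover yields a deterministic coordinatewise bound is exactly the point of the paper's argument as well.
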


\begin{proof}
Let $n>0$ and choose an open covering $\{U_1,\dots,U_K\}$ of $\mathcal{X}$ such that every $U_k$ has diameter less than $\varepsilon$
with respect to $d_n$. Choose a point $p_k\in U_k$ for each $k$.
We define $f:\mathcal{X}\to \{p_1,\dots, p_K\}$ by $f(x)=p_k$ where $k$ is the smallest integer satisfying $x\in U_k$.
Then $d_n(x,f(x)) < \varepsilon$.
Let $X$ be a random variable obeying $\mu$, and set 
$Y= (f(X), Tf(X), \dots, T^{n-1}f(X))$.
We have $d_n(X, f(X))<\varepsilon$ almost surely.
It follows that
\[ \mathbb{E}\left(\text{the number of $i\in [0,n-1]$ with $d(T^i X, T^i f(X))\geq \varepsilon$}\right) = 0. \]
Thus $(X, Y)$ satisfies the distortion condition (\ref{eq: modified distortion condition}) for any $\alpha>0$.
Since $Y$ takes at most $K$ values
\[ I(X;Y) \leq H(Y) \leq \log K. \]
This proves the statement.
\end{proof}

\begin{proposition} \label{prop: lower bound, second main theorem}
For any positive number $\varepsilon$ there exists an invariant probability measure $\mu$ on $\mathcal{X}$ satisfying 
\[ \tilde{R}_\mu(\varepsilon) \geq S(\mathcal{X},T,d, 12\varepsilon). \]
\end{proposition}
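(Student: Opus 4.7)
The plan is to parallel the proof of Proposition~\ref{prop: construction of invariant measure}, with two substantive substitutions: replace the average-distance transport cost $\bar d_m$ by a Hamming-style count of coordinate disagreements $c(a,a')=\#\{i:a_i\neq a'_i\}$, and use Lemma~\ref{lemma: uniform distribution over separated set, second form} in place of Corollary~\ref{cor: uniform distribution over separated set}. All the combinatorial scaffolding of Section~\ref{section: proof of main theorem}---maximal separated sets, the averaged uniform measure $\nu_n$, the $T$-invariant weak$^*$ limit $\mu$, a partition $\mathcal P$ with $\mu$-null boundaries, optimal-transport couplings composed with the conditional of $\tau$, the cyclic-placement-with-padding construction of $\sigma_n$, and the concavity/convexity combinatorics of Claim~\ref{claim: lower bound on mutual information}---carries over unchanged; only the way the distortion is tracked through each step is different.

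Concretely, pick a maximal $12\varepsilon$-separated set $S_n\subset(\mathcal X,d_n)$ (so $|S_n|\geq\#(\mathcal X,d_n,12\varepsilon)$ and $\liminf_n\tfrac{1}{n}\log|S_n|\geq S(\mathcal X,T,d,12\varepsilon)$), let $\nu_n=|S_n|^{-1}\sum_{p\in S_n}\delta_p$, $\mu_n=\tfrac1n\sum_{k=0}^{n-1}T^k_*\nu_n$, and extract a subsequence $n_i$ with $\mu_{n_i}\to\mu$ weakly. Choose a partition $\mathcal P$ with $\diam P_k<\varepsilon$ and $\mu(\partial P_k)=0$, representatives $A=\{p_1,\dots,p_K\}$, and quantisation $\mathcal P^n(x)=(\mathcal P(x),\dots,\mathcal P(T^{n-1}x))$; then $\mathcal P^n(S_n)\subset A^n$ is $10\varepsilon$-separated with respect to $d_n$, and $\mathcal P^n_*\nu_n$ is uniform on it. Now take $(X,Y)$ with $\mathrm{Law}(X)=\mu$ and $\mathbb E(\#\{i:d(T^iX,Y_i)\geq\varepsilon\})<\alpha m$; data-processing gives $I(X;Y)\geq I(\mathcal P^m(X);Y)$, and because $d(T^iX,\mathcal P(T^iX))<\varepsilon$ the law $\tau$ of $(\mathcal P^m(X),Y)$ satisfies $\mathbb E_\tau(\#\{i:d(x_i,y_i)\geq 2\varepsilon\})<\alpha m$. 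Take an optimal coupling $\pi_n$ of $(\mathcal P^m_*\mu_n,\mathcal P^m_*\mu)$ for cost $c$; since we are on the finite set $A^m$ weak$^*$ convergence is total-variation convergence, so $\mathbb E_{\pi_{n_i}}c\to 0$, and composing with $\tau(\cdot\mid x')$ yields a coupling $\tau_n$ of $(\mathcal P^m_*\mu_n,\mathrm{Law}(Y))$ with $\mathbb E_{\tau_{n_i}}(\#\{i:d(x_i,y_i)\geq 2\varepsilon\})<2\alpha m$ for large $n_i$ (by the inclusion $\{d(x_i,y_i)\geq 2\varepsilon\}\subset\{x_i\neq x'_i\}\cup\{d(x'_i,y_i)\geq 2\varepsilon\}$).

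Build $\sigma_n$ from $\tau_n$ as in~\eqref{eq: definition of conditional probability functions}. Claim~\ref{claim: lower bound on mutual information} then gives $\tfrac1m I(\mathcal P^m_*\mu_n,\tau_n)\geq\tfrac1n I(\mathcal P^n_*\nu_n,\sigma_n)$, and a count-based analogue of Claim~\ref{claim: distortion is sufficiently small} (padded coordinates contributing $O(m)=o(n_i)$) yields $\mathbb E_{\mathcal P^{n_i}_*\nu_{n_i},\sigma_{n_i}}(\#\{i:d(x_i,y_i)\geq 2\varepsilon\})<3\alpha n_i$ for large $n_i$. Since $\mathcal P^n(S_n)$ is $10\varepsilon$-separated, which exceeds the $4\varepsilon$-separation required by Lemma~\ref{lemma: uniform distribution over separated set, second form} at threshold $2\varepsilon$, that lemma produces
\[
I(\mathcal P^{n_i}_*\nu_{n_i},\sigma_{n_i})\geq\log|S_{n_i}|-n_iH(3\alpha)-3\alpha n_i\log|A|.
\]
Dividing by $n_i$, using Lemma~\ref{lemma: convergence of mutual information} together with $\tau_{n_i}\to\tau$ to pass to the limit, and then letting $\alpha\to 0$ with $\varepsilon$ (and so $|A|$) fixed yields $\tilde R_\mu(\varepsilon)\geq S(\mathcal X,T,d,12\varepsilon)$.

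The main (and purely technical) obstacle is the simultaneous bookkeeping at each stage (partitioning, Hamming transport, composition, cyclic gluing, invocation of Lemma~\ref{lemma: uniform distribution over separated set, second form}): one must track both the expected number of coordinate disagreements and the threshold defining a disagreement, and check that the cumulative inflation of the threshold remains comfortably below the separation of $\mathcal P^n(S_n)$; the constant $12$ in $12\varepsilon$ is chosen to leave ample slack. The one genuinely new analytical ingredient, $\mathbb E_{\pi_{n_i}}c\to 0$, is essentially trivial on the finite alphabet $A^m$.
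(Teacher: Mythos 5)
Your overall strategy is exactly the paper's: the published proof of Proposition~\ref{prop: lower bound, second main theorem} is precisely the Section~\ref{section: proof of main theorem} machinery with the error-count distortion $f_n$ and Lemma~\ref{lemma: uniform distribution over separated set, second form} replacing $\bar d_n$ and Corollary~\ref{cor: uniform distribution over separated set}, and your use of a Hamming-count transport cost instead of the $d_m$-cost is a harmless variant: on the finite alphabet $A^m$ the total-variation coupling already gives $\mathbb{E}_{\pi_{n_i}}c\to 0$, and your per-coordinate inclusion correctly converts this into $\mathbb{E}_{\tau_{n_i}}\left(\#\{i:\, d(x_i,y_i)\geq 2\varepsilon\}\right)<2\alpha m$ for large $n_i$ (the paper instead just uses $\tau_{n_i}\to\tau$ and continuity of $f_m$ on the finite set $A^m\times\mathcal{Y}$, with error bound $\alpha m$; either route is fine, your constants merely inflate to $3\alpha$, which is immaterial as $\alpha\to 0$).

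There is, however, one concrete error. For a maximal $12\varepsilon$-separated set $S_n$ with respect to $d_n$, the inequality $|S_n|\geq\#(\mathcal{X},d_n,12\varepsilon)$ that you assert is false in general: at equal scales the inequality goes the other way, since each member of an open cover of $d_n$-diameter less than $12\varepsilon$ contains at most one point of a $12\varepsilon$-separated set, so in fact $|S_n|\leq\#(\mathcal{X},d_n,12\varepsilon)$. What maximality buys is that the $12\varepsilon$-balls centered at $S_n$ cover $\mathcal{X}$, which only yields $|S_n|\geq\#(\mathcal{X},d_n,24\varepsilon)$. As written, your argument therefore proves $\tilde{R}_\mu(\varepsilon)\geq S(\mathcal{X},T,d,24\varepsilon)$, which suffices for Theorem~\ref{thm: second main theorem} but is weaker than the proposition as stated. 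The fix is the paper's choice: take $S_n$ a maximal $6\varepsilon$-separated set, so that $|S_n|\geq\#(\mathcal{X},d_n,12\varepsilon)$; then $\mathcal{P}^n(S_n)$ is only $4\varepsilon$-separated rather than your $10\varepsilon$, but that is exactly the $2\times 2\varepsilon$ separation Lemma~\ref{lemma: uniform distribution over separated set, second form} requires at threshold $2\varepsilon$, so the rest of your bookkeeping (including the $H(3\alpha)+3\alpha\log|A|$ error terms, which vanish as $\alpha\to 0$ with $K$ fixed) goes through unchanged.
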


\begin{proof}
For each $n\geq 1$ we take a maximal $6\varepsilon$-separated set $S_n\subset \mathcal{X}$ with respect to 
the distance $d_n$. It follows $|S_n|\geq \#(\mathcal{X}, d_n, 12\varepsilon)$.
Let $\nu_n$ be the uniform distribution over $S_n$ and set 
\[   \mu_n = \frac{1}{n} \sum_{k=0}^{n-1} T^k_*\nu_n. \]
Choose a subsequence $\{n_i\}$ so that $\mu_{n_i}$ converges to $\mu\in \mathscr{M}^T(\mathcal{X})$ in the weak$^*$ topology.
We prove that this $\mu$ satisfies the statement.
For $n\geq 1$, $x=(x_0,\dots,x_{n-1})$ and $y=(y_0,\dots,y_{n-1})$ in $\mathcal{X}^n$ we set 
\[ f_n(x,y) = \text{the number of $k\in [0,n-1]$ satisfying $d(x_k,y_k)\geq 2\varepsilon$}. \]
Here we chose ``$2\varepsilon$'' for the later convenience.

We take a partition $\mathcal{P} = \{P_1,\dots,P_K\}$ such that
$\diam(P_k, d) < \varepsilon$ and $\mu(\partial P_k)=0$ for all $1\leq k\leq K$.
 Choose a point $p_k\in P_k$ for each $k$ and set $A= \{p_1,\dots,p_K\}$.
We define a map $\mathcal{P}:\mathcal{X}\to A$ by $\mathcal{P}(P_k) = \{p_k\}$.
We have $d(x,\mathcal{P}(x)) < \varepsilon$ for all $x\in \mathcal{X}$.
For $n\geq 1$ we set $\mathcal{P}^n(x) = (\mathcal{P}(x), \mathcal{P}(Tx),\dots,\mathcal{P}(T^{n-1}x))$.

\begin{claim}    \label{claim: discretization}
   \begin{enumerate}
      \item The set $\mathcal{P}^n(S_n)$ is $4\varepsilon$-separated with respect to the distance $d_n$. 
      \item  The measure $\mathcal{P}^n_*\nu_n$ is uniformly distributed over $\mathcal{P}^n(S_n)$ and 
                $|\mathcal{P}^n(S_n)| = |S_n|$.  
   \end{enumerate}
\end{claim}

\begin{proof}
See Claim \ref{claim:  P_0^{N-1}(S_N)}.
\end{proof}

Let $0<\alpha<1/4$.
Let $X$ and $Y=(Y_0,\dots,Y_{m-1})$ be random variables such that $\mathrm{Law}(X)=\mu$, and
$Y_i$ take values in $\mathcal{X}$ and satisfy 
\[ \mathbb{E}\left(\text{the number of $0\leq i\leq m-1$ satisfying $d(T^i X, Y_i) \geq \varepsilon$}\right) < \alpha m. \]
We estimate $I(X;Y) \geq I(\mathcal{P}^m(X); Y)$ from below.
As in Remark \ref{remark: reduction to discrete case}, we can assume that the distribution of $Y$ is supported on a 
finite set $\mathcal{Y}\subset \mathcal{X}^m$.
Set $\tau = \mathrm{Law}(\mathcal{P}^m(X),Y)$, which is a probability measure on $A^m\times \mathcal{Y}$.
Since $d\left(T^i X, \mathcal{P}(T^i X)\right) < \varepsilon$, it follows that
\[ \{0\leq i\leq m-1|\, d\left(\mathcal{P}(T^i X), Y_i\right) \geq 2\varepsilon\}
    \subset \{0\leq i\leq m-1|\, d(T^i X, Y_i)\geq \varepsilon\}. \]
Thus 
\begin{equation*}
   \begin{split}
      \mathbb{E}_\tau f_{m}(x,y) &:=  \int_{A^m\times \mathcal{Y}} f_{m}(x,y) d\tau(x,y) \\
    &= \, \mathbb{E}\left(\text{the number of $0\leq i\leq m-1$ s.t. $d\left(\mathcal{P}(T^i X),Y_i\right) \geq 2\varepsilon$}\right) \\
	&< \alpha m.
   \end{split}
\end{equation*}   

For each $n\geq1$ we take a coupling $\pi_n$ of $(\mathcal{P}^m_*\mu_n, \mathcal{P}^m_*\mu)$ which minimizes 
\[  \int_{A^m\times A^m} d_m(x,y) d\pi(x,y) \]
among all couplings $\pi$ of $(\mathcal{P}^m_*\mu_n, \mathcal{P}^m_*\mu)$.
As in Claim \ref{claim: weak convergence of pi_N} in Section \ref{section: proof of main theorem},
it follows from $\mu(\partial P_k)=0$ and
Lemma \ref{lemma: convergence of optimal transport plan} in Appendix that the measures $\pi_{n_i}$
converge to $(\mathcal{P}^m\times \mathcal{P}^m)_*\mu$ in the weak$^*$ topology.
We define a coupling $\tau_n$ of $(\mathcal{P}^m_*\mu_n, \mathrm{Law}(Y))$ by composing $\pi_n$ and $\tau$:
\[  \tau_n(x,y) = \sum_{x' \in A^m} \pi_n(x,x') \mathbb{P}(Y=y|\mathcal{P}^m(X)=x'), \quad 
     (x\in A^m, y\in \mathcal{Y}). \]
$\tau_{n_i}$ converges to $\tau$ in the weak$^*$ topology.
In particular 
\begin{equation} \label{eq: expected value of f_m w.r.t. tau_n}
   \mathbb{E}_{\tau_{n_i}} f_{m}(x,y) = \int_{A^m\times \mathcal{Y}} f_{m}(x,y) d\tau_{n_i}(x,y) 
   < \alpha m  \quad 
    \text{for sufficiently large $n_i$}.
\end{equation}   
   (Here notice that $f_{m}(x,y)$ is a \textit{continuous} function on $A^m\times \mathcal{Y}$ because 
   $A^m\times \mathcal{Y}$ is a finite set.)
We define a conditional probability mass function $\tau_n(y|x)$ by 
\[ \tau_n(y|x) = \frac{\tau_n(x,y)}{\mathcal{P}^m_*\mu_n(x)}, \]
which is defined for 
\[  x\in \bigcup_{k=0}^{n-1} \mathcal{P}^m(T^k S_n), \quad y\in \mathcal{X}^m. \]

Fix a point $a\in \mathcal{X}$. 
For $n\geq 2m$, let $n=mq+r$ with $m\leq r\leq 2m-1$.
For $x\in \mathcal{P}^n(S_n)$ we define probability mass functions $\sigma_{n,i}(\cdot|x)$ $(0\leq i\leq m-1)$
on $\mathcal{X}^n$ as in (\ref{eq: definition of conditional probability functions}):
\[ \sigma_{n,i}(y|x) = \prod_{j=0}^{q-1}\tau_n\left(y_{i+jm}^{i+jm+m-1}|x_{i+jm}^{i+jm+m-1}\right) 
                       \cdot \prod_{k\in [0,i) \cup [n-r+i,n)} \delta_a (y_k). \]
 We set 
 \[  \sigma_n(y|x) = \frac{1}{m}\sum_{i=0}^{m-1}\sigma_{n,i}(y|x). \] 
Exactly as in Claim \ref{claim: lower bound on mutual information}
\begin{equation}  \label{eq: lower bound on mutual information, second main theorem}
    \frac{1}{m} I(\mathcal{P}^m_*\mu_n, \tau_n) \geq \frac{1}{n} I(\mathcal{P}^n_*\nu_n, \sigma_n).
\end{equation}

\begin{claim}  \label{claim: distortion is small, second main theorem}
We denote by $\mathbb{E}_{\mathcal{P}^{n}_*\nu_{n}, \sigma_{n}} f_{n}(x,y)$ the expected value of 
the function $f_{n}(x,y)$ 
(i.e. the number of $k \in [0,n-1]$ satisfying $d(x_k,y_k) \geq 2\varepsilon$)
 with respect to the measure 
\[  \mathcal{P}^n_*\nu_n(x) \sigma_n(y|x). \]
Then for sufficiently large $n_i$
\[ \mathbb{E}_{\mathcal{P}^{n_i}_*\nu_{n_i}, \sigma_{n_i}} f_{n_i}(x,y) < 2\alpha n_i.  \]
\end{claim}

\begin{proof}
\[ \mathbb{E}_{\mathcal{P}^{n}_*\nu_{n}, \sigma_{n}} f_{n}(x,y) = 
   \frac{1}{m} \sum_{i=0}^{m-1}\mathbb{E}_{\mathcal{P}^n_*\nu_n, \sigma_{n,i}} f_{n}(x,y). \]
\[ \mathbb{E}_{\mathcal{P}^n_*\nu_n, \sigma_{n,i}} f_{n}(x,y) \leq r + 
    \sum_{j=0}^{q-1} \mathbb{E}_{\mathcal{P}^m_* T^{i+jm}_*\nu_n, \tau_n} f_{m}(x,y). \]
Thus 
\begin{equation*}
   \begin{split}
    \mathbb{E}_{\mathcal{P}^{n}_*\nu_{n}, \sigma_{n}} f_{n}(x,y) & \leq r + 
     \frac{1}{m}  \sum_{i=0}^{m-1} \sum_{j=0}^{q-1} \mathbb{E}_{\mathcal{P}^m_* T^{i+jm}_*\nu_n, \tau_n} f_{m}(x,y) \\
    &\leq r + \frac{1}{m} \sum_{k=0}^{n-1} \mathbb{E}_{\mathcal{P}^m_*T^k_*\nu_n, \tau_n} f_{m}(x,y) \\
    &= r + \frac{n}{m} \mathbb{E}_{\mathcal{P}^m_*\mu_n, \tau_n} f_m(x,y)
       \quad (\text{by $\mu_n = \frac{1}{n}\sum_{k=0}^{n-1}T^k_*\nu_n$}) \\
    & = r + \frac{n}{m}\int_{A^m\times \mathcal{Y}}f_{m}(x,y)d\tau_n(x,y)  = r +  \frac{n}{m} \mathbb{E}_{\tau_n} f_m(x,y).
   \end{split}
\end{equation*}   
We have $\mathbb{E}_{\tau_n} f_m(x,y) < \alpha m$ for sufficiently large $n=n_i$ by (\ref{eq: expected value of f_m w.r.t. tau_n}).
Thus (by $r\leq 2m-1$)
\[  \mathbb{E}_{\mathcal{P}^{n_i}_*\nu_{n_i}, \sigma_{n_i}} f_{n_i}(x,y)
   < \alpha n_i + r < 2\alpha n_i \]
   for sufficiently large $n_i$.
\end{proof}

In view of Claim \ref{claim: discretization}, Claim~\ref{claim: distortion is small, second main theorem} and Lemma \ref{lemma: uniform distribution over separated set, second form} 
imply that for sufficiently large $n_i$,
\begin{equation}  \label{eq: lower bound on mutual information from small distortion, second main theorem}
   \frac{1}{n_i} I(\mathcal{P}^{n_i}_*\nu_{n_i}, \sigma_{n_i}) \geq \frac{1}{n_i} \log |S_{n_i}| - 2\alpha \log K - H(2\alpha).
\end{equation}

It follows 
from $|S_n|\geq \#(\mathcal{X},d_n,12\varepsilon)$ and the inequalities (\ref{eq: lower bound on mutual information, second main theorem})
and (\ref{eq: lower bound on mutual information from small distortion, second main theorem}) that
\[ \frac{1}{m} I(\mathcal{P}^m_* \mu_{n_i}, \tau_{n_i}) \geq  \frac{1}{n_i} \log \#(\mathcal{X},d_{n_i},12\varepsilon) 
    - 2\alpha \log K - H(2\alpha) \]
for sufficiently large $n_i$.
Recall that the measures $\tau_{n_i}(x,y)$ converge to $\tau = \mathrm{Law}\left(\mathcal{P}^m(X),Y\right)$.
By letting $n_i\to \infty$ we obtain that
\[  \frac{1}{m} I(\mathcal{P}^m(X); Y) \geq S(\mathcal{X}, T, d, 12\varepsilon) - 2\alpha \log K - H(2\alpha).  \]
Thus we conclude 
\[  \tilde{R}_\mu(\varepsilon, \alpha) \geq S(\mathcal{X}, T, d, 12\varepsilon) - 2\alpha \log K - H(2\alpha).  \]
Now notice that $K$ depends only on $\varepsilon$ and independent of $\alpha$.
By letting $\alpha\to 0$ we get
\[  \tilde{R}_\mu(\varepsilon) \geq S(\mathcal{X},T,d,12\varepsilon). \]
\end{proof}

Theorem \ref{thm: second main theorem} follows from Lemma \ref{lemma: upper bound, second main theorem}
and Proposition \ref{prop: lower bound, second main theorem}.

\section{Proof of Proposition \ref{prop: counter example}}  \label{section: proof of Proposition}

In this section we construct a dynamical system $(\mathcal{X},T)$ with a distance $d$ satisfying 
\begin{equation} \label{eq: requirement of counter example}
 \lim_{\varepsilon\to 0} \frac{\tilde{S}(\mathcal{X},T,d,\varepsilon)}{|\log \varepsilon|} = 0, \quad 
    \mdim_{\mathrm{M}} (\mathcal{X},T,d) = \infty. 
\end{equation}    
This proves Proposition \ref{prop: counter example} because $R_\mu(\varepsilon) \leq \tilde{S}(\mathcal{X},T,d,\varepsilon)$
 by Lemma \ref{lemma: metric mean dimension dominates rate distortion function}.

Let $V$ be an infinite dimensional Hilbert space.
We denote its norm by $\norm{\cdot}$.
We can take $A_1,A_2,\dots\subset V$ such that 
\begin{itemize}
  \item $0\in A_n$ for every $n$.
  \item For every $n$ and any two distinct points $a,b\in A_n$ we have $\norm{a-b} = 1/n$.
  \item $\log |A_n| = \Theta(2^n (\log n)^2)$, namely there exists $C>1$ independent of $n$ satisfying 
  \[  C^{-1} 2^n (\log n)^2 \leq \log |A_n| \leq C 2^n (\log n)^2. \]
\end{itemize}

Set $B = \bigcup_{n\geq 1} A_n$. 
This is a compact subset of $V$ and its diameter is bounded by $2$.
For each $n\geq 1$ we define $\mathcal{X}_n \subset A_n^\mathbb{Z}$ as the set of $(x_k)_{k\in \mathbb{Z}}$ such that 
\[  \exists l\in \mathbb{Z}: \> x_k=0 \text{ for all } k\in \mathbb{Z}\setminus \left(l+2^n\mathbb{Z}\right). \]
Set $\mathcal{X} = \bigcup_{n\geq 1} \mathcal{X}_n \subset B^\mathbb{Z}$.
This is compact with respect to the distance 
\[  d(x,y) = \sum_{k\in \mathbb{Z}} 2^{-|k|} \norm{x_k-y_k}. \]
Let $T: \mathcal{X}\to \mathcal{X}$ be the shift.
We show that $(\mathcal{X},T,d)$ satisfies 
the property (\ref{eq: requirement of counter example}).

\begin{claim} \label{claim: metric mean dimension is infinite}
\[   \mdim_{\mathrm{M}} (\mathcal{X},T,d) = \infty. \]
\end{claim}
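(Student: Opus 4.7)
\emph{Plan.} The plan is to produce, for each large $n$, an exponentially large $(1/n)$-separated subset of $\mathcal{X}_n$ under the dynamical distance $d_N$ for suitable $N$. This will force the scale-$(1/n)$ complexity $S(\mathcal{X}, T, d, 1/n)$ to grow like $(\log n)^2$, which dominates $|\log(1/n)| = \log n$ as $n\to\infty$. The essential quantitative feature driving the argument will be the choice $\log|A_n|\geq C^{-1}2^n(\log n)^2$: after dividing by the period $2^n$ (unavoidable, since each element of $\mathcal{X}_n$ is supported on a set of density $2^{-n}$), a factor of $(\log n)^2$ still remains, which is superlinear in $|\log(1/n)|=\log n$.

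\emph{Main construction.} Fix $n\geq 2$ and $m\geq 1$, set $N = m\,2^n$, and define
\[ S_{n,m} = \bigl\{x\in\mathcal{X}_n \colon x_k = 0 \text{ for all } k\notin\{0, 2^n, 2\cdot 2^n,\dots,(m-1)2^n\}\bigr\}. \]
Each of the $m$ distinguished coordinates may be chosen independently in $A_n$, so $|S_{n,m}|=|A_n|^m$. For any two distinct $x,y\in S_{n,m}$, pick an index $j\in\{0, 2^n,\dots,(m-1)2^n\}\subset[0,N)$ with $x_j\neq y_j$. Because the weight $2^{-0}=1$ sits at position $0$ in the definition of $d$, and any two distinct elements of $A_n$ lie at distance exactly $1/n$,
\[ d_N(x,y)\geq d(T^j x, T^j y)\geq \norm{(T^j x)_0-(T^j y)_0}=\norm{x_j-y_j}=1/n. \]
Thus $S_{n,m}$ is $(1/n)$-separated in $d_N$; any open cover of $\mathcal{X}$ by sets of $d_N$-diameter strictly less than $1/n$ must place each point of $S_{n,m}$ in a different member, so $\#(\mathcal{X}, d_N, 1/n)\geq|A_n|^m$.

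\emph{Completion.} Letting $N=m\,2^n\to\infty$ and invoking the hypothesis on $|A_n|$,
\[ S(\mathcal{X},T,d,1/n)\geq\lim_{m\to\infty}\frac{\log|A_n|^m}{m\,2^n}=\frac{\log|A_n|}{2^n}\geq \frac{(\log n)^2}{C}. \]
Since $\varepsilon\mapsto S(\mathcal{X},T,d,\varepsilon)$ is non-increasing, for any $\varepsilon\in(1/(n+1),1/n]$ we will have $S(\mathcal{X},T,d,\varepsilon)\geq (\log n)^2/C$, while $|\log\varepsilon|\leq\log(n+1)\leq 2\log n$ for $n\geq 2$. Hence $S(\mathcal{X},T,d,\varepsilon)/|\log\varepsilon|\geq (\log n)/(2C)\to\infty$ as $\varepsilon\to 0$, which yields $\underline{\mdim}_\mathrm{M}(\mathcal{X},T,d)=\overline{\mdim}_\mathrm{M}(\mathcal{X},T,d)=\infty$. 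I do not foresee any genuine obstacle in this argument: the whole proof is powered by the choice of $|A_n|$ being large enough that $\log|A_n|/2^n$ dwarfs $\log n$, and the only point that requires a tiny bit of care is that the weight at the coordinate shifted to position $0$ is exactly $1$, so no geometric factor of $2^{-|k|}$ spoils the $1/n$ separation.
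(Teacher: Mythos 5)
Your proof is correct and follows essentially the same route as the paper: you exhibit $|A_n|^{N/2^n}$ points of $\mathcal{X}_n$ that are $(1/n)$-separated in $d_N$, deduce $S(\mathcal{X},T,d,\varepsilon)\geq 2^{-n}\log|A_n| = \Theta((\log n)^2)$ for $\varepsilon\leq 1/n$, and divide by $|\log\varepsilon|\approx\log n$. The only difference is that you write out explicitly the separated set and the interpolation over $\varepsilon\in(1/(n+1),1/n]$, which the paper leaves implicit.
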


\begin{proof}
Let $N$ be a multiple of $2^n$. For $0<\varepsilon \leq 1/n$
\[ \#(\mathcal{X}_n, d_N,\varepsilon) \geq |A_n|^{N/2^n}. \]
Thus 
\[ S(\mathcal{X}_n,T,d,\varepsilon) = \lim_{N\to \infty} \frac{1}{N} \log \#(\mathcal{X}_n, d_N,\varepsilon) 
   \geq 2^{-n} \log |A_n| = \Theta\left( (\log n)^2\right). \]
For any $0<\varepsilon<1$
\[ S(\mathcal{X},T,d,\varepsilon) \geq S(\mathcal{X}_{\lfloor 1/\varepsilon\rfloor}, T, d, \varepsilon) \geq 
   \Theta\left((\log \lfloor 1/\varepsilon\rfloor)^2\right). \]
It follows
\[ \mdim_{\mathrm{M}}(\mathcal{X},T,d) = \lim_{\varepsilon \to 0} \frac{S(\mathcal{X},T,d,\varepsilon)}{|\log \varepsilon|} 
   = \infty. \]
\end{proof}

Let $\varepsilon>0$ and set $L=L(\varepsilon) = \lceil \log_2(8/\varepsilon)\rceil$.
It follows $\sum_{|n|>L}2^{-|n|} \leq \varepsilon/4$.

\begin{claim}  \label{claim: crucial property of counter example}
  If $N\geq 2L+2^n$ and $n>\log_2 (1/\varepsilon) + \log_2(48L+24)$ then every $x\in X_n$ satisfies 
  $\bar{d}_N(x,0) < \varepsilon/2$. Here $0 = (\dots,0,0,0,\dots)\in X$.
\end{claim}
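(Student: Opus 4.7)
\textbf{Proof plan for Claim \ref{claim: crucial property of counter example}.}
The plan is to unfold $\bar d_N(x,0)=\frac1N\sum_{k=0}^{N-1}d(T^kx,0)$ using
\[
d(T^kx,0)=\sum_{m\in\mathbb{Z}}2^{-|m|}\,\norm{x_{k+m}},
\]
then exploit the very sparse structure of points of $\mathcal X_n$: each coordinate $x_j$ has norm in $\{0,1/n\}$, and the nonzero coordinates sit in a single arithmetic progression $l+2^n\mathbb{Z}$. The rapid decay of the weights $2^{-|m|}$ allows a truncation at $|m|=L$, and the sparsity then gives the desired averaged bound.

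First I would split, for each fixed $k$,
\[
d(T^kx,0)=\underbrace{\sum_{|m|\leq L}2^{-|m|}\norm{x_{k+m}}}_{=:H_k}+\underbrace{\sum_{|m|>L}2^{-|m|}\norm{x_{k+m}}}_{=:T_k}.
\]
Since $\sum_{|m|>L}2^{-|m|}\leq\varepsilon/4$ by the choice $L=\lceil\log_2(8/\varepsilon)\rceil$, and $\norm{x_j}\leq 1/n$ on $\mathcal X_n$, one has $T_k\leq\varepsilon/(4n)$ pointwise in $k$, contributing at most $\varepsilon/(4n)$ to $\bar d_N(x,0)$. The key step is to handle $\frac{1}{N}\sum_{k=0}^{N-1}H_k$. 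For this I would use two observations: (i) by the hypothesis on $n$, we have $2^n>48L+24>2L+1$, so for any $k$ the set $\{m\in[-L,L]:k+m\in l+2^n\mathbb{Z}\}$ contains at most one element, whence $H_k\leq 1/n$ whenever it is nonzero and $H_k=0$ otherwise; and (ii) the number of $k\in[0,N-1]$ for which $H_k\neq 0$ is at most $(2L+1)\bigl(\lceil N/2^n\rceil\bigr)$, since for each of the $2L+1$ values of $m$ the set of suitable $k$ is a residue class mod $2^n$.

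Combining these, and using $N\geq 2L+2^n\geq 2^n$,
\[
\frac{1}{N}\sum_{k=0}^{N-1}H_k\leq\frac{(2L+1)}{n}\cdot\frac{\lceil N/2^n\rceil}{N}\leq\frac{2(2L+1)}{n\cdot 2^n}.
\]
The hypothesis $n>\log_2(1/\varepsilon)+\log_2(48L+24)$ gives $2^n>24(2L+1)/\varepsilon$, hence $\frac{2(2L+1)}{n\cdot 2^n}<\frac{\varepsilon}{12n}$. Adding the tail contribution,
\[
\bar d_N(x,0)\leq\frac{\varepsilon}{12n}+\frac{\varepsilon}{4n}=\frac{\varepsilon}{3n}<\frac{\varepsilon}{2},
\]
as desired.

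The only mildly delicate step is the head estimate, which is purely a counting argument: the sparsity condition $2^n>2L+1$ (automatic from the hypothesis on $n$) guarantees that within any window of width $2L+1$ one sees at most one ``spike'' of $x$. Once this is in place, the rest is a straightforward bookkeeping of how many translates $k$ put a spike into the window of width $2L+1$ centered at $k$, combined with the geometric tail of the weights $2^{-|m|}$. No serious obstacle arises; the condition $N\geq 2L+2^n$ is used only to absorb the ``boundary'' term $1/(nN)$ into the main term $1/(n\cdot 2^n)$.
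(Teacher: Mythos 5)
Your proof is correct and follows essentially the same route as the paper's: identifying the bad indices $k$ (those with $k\in[l-L,l+L]+2^n\mathbb{Z}$, which is precisely your set $\{k:H_k\neq 0\}$), counting them using $N\geq 2^n$, and absorbing the geometric tail $\sum_{|m|>L}2^{-|m|}\leq\varepsilon/4$. Your head/tail decomposition of each $d(T^kx,0)$ is just a slightly cleaner reorganization of the paper's "good index / bad index" split, and your careful tracking of the $1/n$ factor in $\|x_j\|$ gives sharper constants than the paper needs, but the underlying estimate and the role of each hypothesis are the same.
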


\begin{proof}
Let $x\in X_n$. There exists an integer $l$ such that $x_k=0$ for all $k\in \mathbb{Z}\setminus (l+2^n\mathbb{Z})$.
Then $d(T^i x, 0) \leq \varepsilon/4$ for any $i$ outside of $[l-L,l+L]+2^n\mathbb{Z}$. 
We count how many $i\in [0,N)$ fall in $[l-L,l+L]+2^n\mathbb{Z}$:
\begin{equation*}
  \begin{split}
   \frac{1}{N} |\left([l-L,l+L]+2^n\mathbb{Z}\right)\cap [0,N)| &\leq \frac{1}{N}\left(1+\frac{N+2L}{2^n}\right)(2L+1)\\
    &=  \left(1+\frac{2L+2^n}{N}\right) \frac{2L+1}{2^n} \\
    &\leq \frac{4L+2}{2^n} \quad (\text{by $N\geq 2L+2^n$}).
  \end{split}
\end{equation*}
Therefore 
\[ \bar{d}_N(x,0) \leq \frac{\varepsilon}{4} + \frac{3(4L+2)}{2^n} < \frac{\varepsilon}{2} \quad 
    (\text{by $n > \log_2(1/\varepsilon) + \log_2(48 L +24)$}). \]
\end{proof}

We take $\varepsilon_0>0$ so that all $0<\varepsilon<\varepsilon_0$ satisfy 
$\log_2(1/\varepsilon) > \log_2(48L(\varepsilon)+24)$.
We set $N_0(\varepsilon) = 2L(\varepsilon)+2^{\lfloor 6/\varepsilon\rfloor}$.
In the rest of this section we always assume 
\[ 0<\varepsilon < \varepsilon_0, \quad N\geq N_0(\varepsilon). \]

\begin{claim}  \label{claim: X_n is small w.r.t. d_bar}
\[  \bigcup_{n\geq 2\log_2(1/\varepsilon)} \mathcal{X}_n \subset B_{\varepsilon/2}(0,\bar{d}_N), \]
where the right-hand side is the open $\varepsilon/2$-ball around $0$ with respect to the distance $\bar{d}_N$.
Therefore 
\[ \#\left(\bigcup_{n\geq 2\log_2(1/\varepsilon)}\mathcal{X}_n, \bar{d}_N,\varepsilon\right) = 1.\]
\end{claim}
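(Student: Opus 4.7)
The reduction is to show that every point of $\bigcup_{n \geq 2\log_2(1/\varepsilon)} \mathcal{X}_n$ lies in the open $\bar{d}_N$-ball of radius $\varepsilon/2$ centred at $0$, with the covering-number assertion following because the uniform bound one actually obtains will be strictly less than $\varepsilon/2$, so the union has $\bar{d}_N$-diameter strictly less than $\varepsilon$ and is therefore covered by a single open set of $\bar{d}_N$-diameter less than $\varepsilon$. Fix $x \in \mathcal{X}_n$ with $n \geq 2\log_2(1/\varepsilon)$, and let $l$ be such that $x_k = 0$ for $k \notin l + 2^n\mathbb{Z}$.

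I would split the argument according to whether the hypothesis $N \geq 2L + 2^n$ of Claim \ref{claim: crucial property of counter example} is available, which amounts to comparing $n$ with $\lfloor 6/\varepsilon\rfloor$. If $n \leq \lfloor 6/\varepsilon\rfloor$, then $N \geq N_0(\varepsilon) = 2L + 2^{\lfloor 6/\varepsilon\rfloor} \geq 2L + 2^n$, while $n \geq 2\log_2(1/\varepsilon) > \log_2(1/\varepsilon) + \log_2(48L+24)$ by the very choice of $\varepsilon_0$. Hence Claim \ref{claim: crucial property of counter example} applies directly and yields $\bar{d}_N(x, 0) < \varepsilon/2$ with plenty of room.

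The remaining regime $n > \lfloor 6/\varepsilon\rfloor$ must be handled separately, since there $N$ may be strictly smaller than $2L + 2^n$ and the claim is no longer available; fortunately, the support of $x$ is now extremely sparse and a direct estimate succeeds. Classify $i \in [0, N)$ as \emph{good} if $i \notin [l-L, l+L] + 2^n\mathbb{Z}$ and \emph{bad} otherwise. For good $i$ only the tail $|k|>L$ contributes to $d(T^ix, 0) = \sum_k 2^{-|k|}\norm{x_{i+k}}$; combining $\norm{x_j} \leq 1/n$ (which holds for every $x_j \in A_n$) with $\sum_{|k|>L}2^{-|k|} \leq \varepsilon/4$ yields $d(T^ix, 0) \leq \varepsilon/(4n)$. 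For bad $i$ use the crude bound $d(T^ix,0) \leq 3/n$. Since $2^n > N - 2L$ in this regime, the bad indices cluster into at most two runs of length $2L+1$, so $\bar{d}_N(x, 0) \leq \varepsilon/(4n) + 6(2L+1)/(nN)$, which is well under $\varepsilon/2$ because $n \geq 2\log_2(1/\varepsilon) \geq 2$ and $N \geq N_0(\varepsilon)$ is dramatically larger than $L$. The main subtlety is precisely this need for a second argument: Claim \ref{claim: crucial property of counter example} only covers the range where the support period $2^n$ is comparable to or smaller than the time window $N$, and the value $\lfloor 6/\varepsilon\rfloor$ in $N_0(\varepsilon)$ is chosen exactly so that the two regimes meet cleanly at $n \approx \lfloor 6/\varepsilon\rfloor$.
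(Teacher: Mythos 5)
Your handling of the regime $2\log_2(1/\varepsilon)\leq n\leq \lfloor 6/\varepsilon\rfloor$ matches the paper: check that $N\geq N_0(\varepsilon)\geq 2L+2^n$ and that $n\geq 2\log_2(1/\varepsilon)>\log_2(1/\varepsilon)+\log_2(48L+24)$, and invoke Claim \ref{claim: crucial property of counter example}.

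Your treatment of $n>\lfloor 6/\varepsilon\rfloor$, however, has a genuine gap. You assert ``Since $2^n>N-2L$ in this regime'' and conclude the bad indices cluster into at most two runs. But the claim is required to hold for \emph{every} $N\geq N_0(\varepsilon)$, with $N_0(\varepsilon)=2L+2^{\lfloor 6/\varepsilon\rfloor}$ only a \emph{lower} bound; in the definition $\tilde S(\mathcal{X},T,d,\varepsilon)=\lim_{N\to\infty}\frac1N\log\#(\mathcal{X},\bar d_N,\varepsilon)$ one sends $N\to\infty$, so for any fixed $n$ there are infinitely many admissible $N$ with $N\geq 2L+2^n$, where your sparsity statement $2^n>N-2L$ is false and the bad indices do not cluster into two runs. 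You even acknowledge that the inequality $N<2L+2^n$ ``may'' fail and then immediately treat it as if it always held. As written, the argument does not cover the sub-case $n>\lfloor6/\varepsilon\rfloor$ and $N\geq 2L+2^n$.

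The paper's argument for $n>6/\varepsilon$ is much simpler and avoids any structure of the bad set: every coordinate of any point of $\mathcal{X}_n$ lies in $A_n$, which is contained in the ball of radius $1/n$ about $0$ in $V$, so for every $i$ one has
\[ d(T^ix,0)=\sum_{k\in\mathbb{Z}}2^{-|k|}\,\|(T^ix)_k\|\leq \frac{1}{n}\sum_{k\in\mathbb{Z}}2^{-|k|}=\frac{3}{n}<\frac{\varepsilon}{2}. \]
This is a pointwise bound, uniform in $i$, so trivially $\bar d_N(x,0)\leq 3/n<\varepsilon/2$ for any $N$. No good/bad split is needed. If you wanted to rescue your version you would have to add a third case (apply Claim \ref{claim: crucial property of counter example} when $n>\lfloor 6/\varepsilon\rfloor$ and $N\geq 2L+2^n$, which works because $n>6/\varepsilon>2\log_2(1/\varepsilon)$ still gives the needed inequality), but the uniform $3/n$ bound makes that entire machinery superfluous.
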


\begin{proof}
For $n> 6/\varepsilon$ every $x\in \mathcal{X}_n$ satisfies $d(x,0)\leq 3/n < \varepsilon/2$.
Thus $\mathcal{X}_n \subset B_{\varepsilon/2}(0,\bar{d}_N)$.
For $2\log_2(1/\varepsilon) \leq n \leq 6/\varepsilon$ it also follows that
$\mathcal{X}_n\subset B_{\varepsilon/2}(0,\bar{d}_N)$ by Claim \ref{claim: crucial property of counter example}
because the assumptions imply $N\geq 2L+2^n$ and $n>\log_2(1/\varepsilon) + \log_2(48L+24)$.
\end{proof}

From Claim \ref{claim: X_n is small w.r.t. d_bar} and an elementary inequality 
\[ \log (a_1+a_2+\dots+a_K) \leq \log K + \max_{1\leq i\leq K} \log a_i,  \quad (a_1,\dots,a_K>0),  \]
it follows that
\[ \log \#(\mathcal{X}, \bar{d}_N,\varepsilon) \leq 
   \log\left(1+2\log_2(1/\varepsilon)\right) + \max_{1\leq n < 2\log_2(1/\varepsilon)} 
    \log\#(\mathcal{X}_n, \bar{d}_N, \varepsilon). \]
The term $\log\#(\mathcal{X}_n, \bar{d}_N, \varepsilon)$ can be easily estimated:
\[ \#(\mathcal{X}_n,\bar{d}_N,\varepsilon) \leq \#(\mathcal{X}_n,d_N,\varepsilon) \leq 2^n |A_n|^{1+2^{-n}(N+2L)}
  \quad (\text{by $\sum_{|n|>L} 2^{-|n|} < \varepsilon/4$}). \]
\begin{equation*}
    \begin{split}
     \log \#(\mathcal{X}_n,\bar{d}_N,\varepsilon)  &\leq n\log 2 + \{1+2^{-n}(N+2L)\}  \log |A_n| \\
     &\leq n\log 2 + (2^n+N+2L) O\left((\log n)^2\right).
     \end{split}
\end{equation*}     
Hence $\log\#(\mathcal{X},\bar{d}_N,\varepsilon)$ is bounded by
\[   (2\log 2) \log_2(1/\varepsilon) + \log\left(1+ \log_2(1/\varepsilon)\right)
    + \left((1/\varepsilon)^2 + N + 2L \right) O\left((\log\log 1/\varepsilon)^2\right). \]
Thus 
\[ \tilde{S}(\mathcal{X},T,d,\varepsilon) = \lim_{N\to \infty} \frac{1}{N} \log  \#(\mathcal{X}, \bar{d}_N,\varepsilon)
   \leq   O\left((\log\log 1/\varepsilon)^2\right). \]  
So we conclude
\[ \lim_{\varepsilon\to 0} \frac{\tilde{S}(\mathcal{X},T,d,\varepsilon)}{|\log\varepsilon|} =0. \]

\appendix

\section{Elementary lemmas on optimal transport}

The purpose of this appendix is to prove lemmas on optimal transport which are used in the proofs of Theorems \ref{thm: main theorem} and 
\ref{thm: second main theorem}.
Our argument here is completely elementary.
Much more general and systematic treatments can be found in \cite{Ambrosio--Gigli--Savare} and 
\cite{Villani}.
In this appendix we identify probability measures with their probability mass functions.

Let $A$ be a finite set with a distance $d$.
For two probability measures $\mu$ and $\nu$ on $A$ we denote by $\mathscr{M}(\mu,\nu)$
the set of probability measures $\pi$ on $A\times A$ whose first and second marginals are $\mu$ and $\nu$ respectively.
We define the $L^1$-Wasserstein distance $W(\mu,\nu)$ by 
\[ W(\mu,\nu) = \min_{\pi\in \mathscr{M}(\mu,\nu)} \int_{A\times A} d(x,y) d\pi(x,y). \]
A measure $\pi \in \mathscr{M}(\mu,\nu)$ attaining this minimum is called an optimal transference plan between $\mu$ and $\nu$.

\begin{lemma}  \label{lemma: W metrizes weak* topology}
Let $\{\mu_n\}_{n\geq 1}$ be a sequence of probability measures on $A$ converging to $\mu$ in the weak$^*$ topology.
Then 
\[ \lim_{n\to \infty} W(\mu_n, \mu) = 0. \]
\end{lemma}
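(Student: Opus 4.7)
The plan is to exhibit a near-diagonal coupling whose transport cost we can bound by the total variation distance, and then observe that on a finite set weak$^*$ convergence is just coordinatewise convergence of the mass functions, which forces the total variation distance to zero.

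More concretely, since $A$ is finite, the weak$^*$ convergence $\mu_n \to \mu$ is equivalent to $\mu_n(a) \to \mu(a)$ for every $a \in A$. Set $c_n = \sum_{a \in A} \min(\mu_n(a), \mu(a))$; by continuity of $\min$ and finiteness of $A$ we have $c_n \to \sum_{a \in A} \mu(a) = 1$. I will define a coupling $\pi_n \in \mathscr{M}(\mu_n, \mu)$ by first placing mass $\min(\mu_n(a),\mu(a))$ on each diagonal point $(a,a)$, and then distributing the remaining mass $1 - c_n$ on the off-diagonal in any way consistent with the marginals $\mu_n - \min(\mu_n,\mu)$ and $\mu - \min(\mu_n,\mu)$ (for instance by the product of the normalized excess distributions). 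Since the diagonal part contributes $0$ to $\int d(x,y)\,d\pi_n$, we get the estimate
\[ W(\mu_n, \mu) \leq \int_{A \times A} d(x,y)\, d\pi_n(x,y) \leq \diam(A,d) \cdot (1 - c_n). \]

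Letting $n \to \infty$ and using $c_n \to 1$ gives $W(\mu_n,\mu) \to 0$. There is no real obstacle here: the only subtlety is to check that the off-diagonal residual measures have equal total mass $1 - c_n$ (which is immediate since both marginals are probability measures) so that a coupling of them exists; finiteness of $A$ takes care of everything else, and in particular makes $\diam(A,d)$ a finite constant.
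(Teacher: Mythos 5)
Your proof is correct, and it takes a cleaner route than the paper's, though both share the same underlying strategy of exhibiting an explicit coupling concentrated near the diagonal. The paper identifies $A$ with a cyclic group $\mathbb{Z}/K\mathbb{Z}$ and builds a coupling $\pi_n$ by a greedy inductive filling procedure (defining $\pi_n(x,y)$ entry by entry as a minimum of leftover masses), and then verifies by inspection that $\pi_n(x,x)\to\mu(x)$ and $\pi_n(x,y)\to 0$ for $x\neq y$, whence the transport cost tends to zero. You instead split off the common mass $\min(\mu_n,\mu)$ onto the diagonal and couple the residuals (which indeed have equal total mass $1-c_n$, and in fact disjoint supports, so any coupling of them works); this yields the quantitative bound $W(\mu_n,\mu)\leq \diam(A,d)\,(1-c_n)$, where $1-c_n$ is precisely the total variation distance between $\mu_n$ and $\mu$, and finiteness of $A$ reduces weak$^*$ convergence to pointwise convergence of the mass functions, so $c_n\to 1$. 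The only cosmetic point is the degenerate case $c_n=1$, where the normalized residuals are undefined but there is no mass left to distribute, so the diagonal coupling already works. Your version is shorter, avoids the artificial cyclic ordering, and makes the relation between Wasserstein and total variation distance explicit; the paper's construction achieves the same end with a more hands-on bookkeeping argument.
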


\begin{proof}
This is a consequence of the general fact that the Wasserstein distance metrizes the weak$^*$ topology
(\cite[Theorem 6.9]{Villani}).
Here we prove it directly.
For the notational convenience we identify $A$ with some cyclic group $\mathbb{Z}/K\mathbb{Z}$.

We define $\pi_n\in \mathscr{M}(\mu_n,\mu)$ as follows.
First we set 
\begin{equation*}
    \begin{split}
      \pi_n(0,0) &= \min(\mu_n(0), \mu(0)), \\ 
      \pi_n(0,y) &= \min\Bigl(\mu_n(0)-\sum_{k=0}^{y-1}\pi_n(0,k), \mu(y)\Bigr) \quad (1\leq y\leq K-1). 
    \end{split}
\end{equation*}    
Here we defined $\pi_n(0,y)$ inductively with respect to $y$.
Next we set 
\begin{align*}
      \pi_n(1,1) &= \min(\mu_n(1), \mu(1)-\pi_n(0,1)), \\ 
\intertext{and for $2\leq y\leq K$}
      \pi_n(1,y) &= \min\Bigl(\mu_n(1)-\sum_{k=1}^{y-1}\pi_n(1,k), \mu(y)-\pi_n(0,y)\Bigr). 
\end{align*}   
Note that $y=K$ is the same as $y=0$ in $\mathbb{Z}/K\mathbb{Z}$.
In general we set 
\begin{align*}
      \pi_n(x,x) &= \min\Bigl(\mu_n(x), \mu(x)-\sum_{k=0}^{x-1}\pi_n(k,x)\Bigr), \\ 
 \intertext{and for $x+1\leq y\leq K+x-1$}
 \pi_n(x,y) &= \min\Bigl(\mu_n(x)-\sum_{k=x}^{y-1}\pi_n(x,k), \mu(y)-\sum_{k=0}^{x-1}\pi_n(k,y)\Bigr).
\end{align*}   

The assumed convergence $\mu_n\to \mu$ in the weak$^*$ topology means that $\mu_n(x)\to \mu(x)$ for every $x$.
Then it is easy to check that 
\[ \pi_n(x,x) \to \mu(x), \quad \pi_n(x,y) \to 0 \quad (x\neq y). \]
This implies
\[ W(\mu_n,\mu) \leq \int_{A\times A} d(x,y) d\pi_n(x,y) \to 0. \]
\end{proof}

\begin{lemma}   \label{lemma: convergence of optimal transport plan}
Let $\{\mu_n\}_{n\geq 1}$ be a sequence of probability measures on $A$ converging to $\mu$ in the weak$^*$ topology.
Let $\pi_n$ be an optimal transference plan between $\mu_n$ and $\mu$.
Then the sequence $\pi_n$ converges to $(\mathrm{Id}\times \mathrm{Id})_*\mu$.
\end{lemma}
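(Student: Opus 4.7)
The plan is to exploit the previous lemma (Lemma \ref{lemma: W metrizes weak* topology}) together with compactness of the space of probability measures on the finite set $A \times A$. The key observation is that the diagonal measure $(\mathrm{Id}\times \mathrm{Id})_*\mu$ is itself a coupling of $\mu$ with $\mu$, with zero transport cost, so $W(\mu,\mu)=0$. Hence optimal couplings between $\mu_n$ and $\mu$ must have vanishing cost in the limit.

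Concretely, I would first record that by Lemma \ref{lemma: W metrizes weak* topology},
\[
\int_{A\times A} d(x,y)\, d\pi_n(x,y) = W(\mu_n,\mu) \longrightarrow W(\mu,\mu)=0,
\]
where the convergence of $W(\mu_n,\mu)$ follows by applying Lemma \ref{lemma: W metrizes weak* topology} to the sequence $\mu_n \to \mu$ (and noting $W(\mu,\mu)=0$). Next, since $A\times A$ is a finite set, the set of probability measures on $A\times A$ is compact in the weak$^*$ topology. So it suffices to prove that every weak$^*$-convergent subsequence of $\{\pi_n\}$ converges to $(\mathrm{Id}\times \mathrm{Id})_*\mu$; then the full sequence converges to this limit.

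Given such a subsequence $\pi_{n_k} \to \pi$, I would verify two properties of $\pi$. First, the marginals of $\pi$ are both $\mu$: since the projection maps $A\times A \to A$ are continuous and the first (resp.\ second) marginal of $\pi_{n_k}$ equals $\mu_{n_k}$ (resp.\ $\mu$) by definition of $\mathscr{M}(\mu_{n_k},\mu)$, we get in the limit that both marginals of $\pi$ equal $\mu$. Second, since $d:A\times A\to \mathbb{R}$ is a bounded continuous function,
\[
\int_{A\times A} d(x,y)\, d\pi(x,y) = \lim_{k\to\infty} \int_{A\times A} d(x,y)\, d\pi_{n_k}(x,y) = 0.
\]
Because $d$ is a metric, $d(x,y)>0$ whenever $x\neq y$, so $\pi$ must be concentrated on the diagonal $\{(x,x):x\in A\}$. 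The two properties together force $\pi = (\mathrm{Id}\times \mathrm{Id})_*\mu$, since a diagonal measure is determined by any one of its marginals.

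The argument is essentially routine once Lemma \ref{lemma: W metrizes weak* topology} is in hand; there is no genuine obstacle beyond assembling these observations, and the finiteness of $A$ makes the compactness, continuity of marginals, and continuity of integration against $d$ all automatic. The only point one must be careful about is to treat the compactness/subsequence step properly, i.e.\ to argue that uniqueness of the subsequential limit upgrades to convergence of the full sequence $\pi_n$.
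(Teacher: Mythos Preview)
Your argument is correct. Both your proof and the paper's rely on Lemma \ref{lemma: W metrizes weak* topology} to get $\int_{A\times A} d(x,y)\,d\pi_n(x,y)=W(\mu_n,\mu)\to 0$, but the paper takes a more direct route than your compactness/subsequence argument: it simply observes that for $a\neq b$ the single term $d(a,b)\,\pi_n(a,b)$ is dominated by the full integral $W(\mu_n,\mu)$, so each off-diagonal mass $\pi_n(a,b)$ tends to $0$, and then the marginal constraint $\pi_n(b,b)=\mu(b)-\sum_{a\neq b}\pi_n(a,b)$ forces $\pi_n(b,b)\to\mu(b)$. This yields pointwise convergence of every entry of $\pi_n$ in two lines, with no appeal to compactness or subsequential limits. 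Your approach is a standard ``soft'' topological argument that would generalize more readily beyond finite $A$, whereas the paper's approach exploits finiteness to give a completely explicit computation.
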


\begin{proof}
For any $a\neq b$ in $A$ 
\[ \pi_n(a,b) \leq \frac{1}{d(a,b)} \int_{A\times A} d(x,y) d\pi_n(x,y)
    = \frac{W(\mu_n,\mu)}{d(a,b)}. \]
The right-hand side converges to zero by Lemma \ref{lemma: W metrizes weak* topology}.
In the diagonal 
\[ \pi_n(b,b) = \mu(b) - \sum_{a\neq b} \pi_n(a,b) \to \mu(b). \]
\end{proof}

\vspace{0.5cm}

\vspace{0.5cm}

\address{ Elon Lindenstrauss \endgraf
Einstein Institute of Mathematics, Hebrew University, Jerusalem 91904, Israel}

\textit{E-mail address}: \texttt{elon@math.huji.ac.il}

\vspace{0.5cm}

\address{ Masaki Tsukamoto \endgraf
Department of Mathematics, Kyoto University, Kyoto 606-8502, Japan}

\textit{E-mail address}: \texttt{tukamoto@math.kyoto-u.ac.jp}

\textit{Current address}: 
Einstein Institute of Mathematics, Hebrew University, Jerusalem 91904, Israel

\end{document}